\documentclass[reqno,10pt]{amsart}
\usepackage{amscd,amssymb,amsmath,color,url,stmaryrd}
\usepackage{latexsym}
\usepackage[all]{xy}
\usepackage{defs}
\usepackage[colorlinks=true]{hyperref}
\usepackage{tikz-cd}
\usepackage[T1]{fontenc}

\usepackage{forest}

\def\Ig{{\rm Ig}}
\def\ss{{\rm ss}}

\def\brC{{\breve{C}}}
\def\brE{{\breve{E}}}

\def\NN{{\bbN}}
\def\GG{{\bbG}}
\def\QQ{{\bbQ}}

\def\ZZ{{\bbZ}}
\def\FF{{\bbF}}
\def\PP{{\bbP}}
\def\cO{{\calO}}
\def\cT{{\calT}}

\def\cX{{\calX}}
\def\cZ{{\calZ}}
\def\tilcX{{\wt{\calX}}}

\def\GL{{\rm GL}}
\def\SL{{\rm SL}}
\def\.{{,\dots,}}

\def\cI{{\calI}}
\def\cO{{\calO}}

\def\hatcE{{\widehat{\calE}}}
\def\cE{{\calE}}
\def\cP{{\calP}}

\def\cH{{\calH}}

\def\cM{{\calM}}
\def\ocH{{\overline{\cH}}}
\def\ocE{{\overline{\cE}}}
\def\ocP{{\overline{\cP}}}

\def\ocM{{\overline{\cM}}}

\begin{document}
\title{Torsion-stabilized modular curves of level $p$}
\author{Michael Temkin}

\thanks{This research was supported by IHES and MPIM-Bonn, a grant from the School of Mathematics at the IAS, and ISF Grant 1203/22}

\address{Einstein Institute of Mathematics\\
               The Hebrew University of Jerusalem\\
                Edmond J. Safra Campus, Giv'at Ram, Jerusalem, 91904, Israel}
\email{michael.temkin@mail.huji.ac.il}

\begin{abstract}
This is the first paper of a project on new integral models $\cX(N)$ of the modular curve $X(N)$. The final results for a general level $N$ will be obtained in the second paper, while this paper is devoted to giving all necessary background and definitions applicable to any $N$ and then working out the case of $\cX(p)$ with all possible details. We define $\cX(N)$ as the closure of $Y(N)$ in the space $\ocM_{1,N^2}=\ocM_{1,\Gamma}$, where $\Gamma=(\ZZ/N\ZZ)^2$, and show that for $N=p$ it is the blowup of the Katz-Mazur model $\tilcX(p)$ at all supersingular points, and hence $(\cX(p),Y(p))$ is the minimal toroidal resolution of $(\tilcX(p),Y(p))$. In fact, it is even log smooth over $(\ZZ,\ZZ[1/p])$, but this is special for the case when $p=N$. One can tautologically view $\cX(p)$ as the moduli space of $\Gamma$-stabilized genus-1 curves $(E,\Gamma)$ which can be smoothed to an elliptic curve labelled by its $N$-torsion, but our main results provide explicit criteria of the smoothability: $\cX(p)$ parameterizes $\Gamma$-equivariant stable genus-1 curves $(E,\Gamma)$ such that the action satisfies two explicit conditions formulated in the paper.
\end{abstract}

%\subjclass{}
\keywords{Modular curves, level structure, stable pointed curves, wild ramification}
\maketitle

\setcounter{tocdepth}{1}
\tableofcontents

\section{Introduction}

\subsection{Wild Hurwitz spaces}
Our study of new models of modular curves grew out from a wider class of problems that can be roughly characterized as the search for models and moduli spaces of Hurwitz spaces of covers in presence of wild ramification. In a sense, modular curves parameterize the basic abelian case when one classifies (rigidified) isogenies, and general wild Hurwitz spaces should be (substantially?) more complicated. Although the broader context is not necessary for reading this paper we prefer to start with the origin of the problem, a brief description of the state of the art in the field of wild Hurwitz spaces (which is only starting to be studied and is almost terra incognita when the wild ramification index is divisible by $p^2$), and some general ideas and approaches that originated there and also apply to the study of modular curves.

\subsubsection{The compactified Hurwitz space $\ocH_{2,4}$}
In \cite{AO} Abramovich and Oort constructed a natural compactification over $\ZZ$ of the classical Hurwitz space $\cH_{2,4}=\PP^1_{\ZZ[1/2]}$ which parameterizes double covers $E\to\PP^1$ ramified at 4 marked points. In fact, $\cH_{2,4}=\PP^1_\lam$ parameterizes Kummer covers $y^2=t(t-1)(t-\lam)$ and over the three cusps $0,1,\infty$ the cover degenerates to a cover of two $\PP^1$'s, each one containing two marked points, meeting at a node by a pair of two $\PP^1$'s meeting at two nodes.

Abramovich and Oort implemented an idea of Pandaripande to embed Hurwitz schemes into certain moduli spaces of stable maps, which are natural moduli spaces proper over $\ZZ$, and define a natural compactification just by taking the closure in this ambient space. In \cite{AO} the outcome in the simplest case of $\ocH=\ocH_{2,4}$ was computed explicitly. It turned out the fiber of $\ocH$ over $\FF_2$ consists of three $\PP^1_j$ lines, which parameterize the ordinary locus over which the four branch points over $\{0,1,\lam,\infty\}$ on $E_\QQ$ collide to two branch points on $E_{\FF_2}$, and a $\PP^1_\lam$ component, which parameterizes supersingular covers, in which all four points collide to the single ramification point. Of course, this component intersects the $\PP^1_j$'s at $\lam=0,1,\infty$, and the intersection points on $\PP^1_j$'s correspond to the only supersingular elliptic curve in characteristic 2, which has $j=0$.

We refer to \cite{AO} for details and only mention here that the embedding into the moduli space of stable maps with four marked points forces one to keep the ''ramification points'' disjoint even modulo 2. Their proof provides a natural stable map $\ocE\to\ocP$ between prestable $\ocH$-curves of genus 1 and 0, respectively, and $\ocE$ also comes equipped with four disjoint sections $P_0,P_1,P_\infty,P_\lam$. To a large extent the construction reduces to first extending the cover $\cE\to\cP$ over $\ZZ[1/2]$ to a finite cover of prestable curves (simultaneous semistable reduction) and then blowing up the locus over $\FF_2$ where the closure of the sections collide. Sometimes such a blowup also forces one to modify the base so that the curves stay flat over the base. The collision pattern is different in the ordinary and supersingular cases and this is reflected in the space $\ocH$.

\begin{rem}
In my view the decisive and most non-standard step in this approach is to ignore the fact that ramification patterns are different for tame and wild ramification, and to prevent the four points from colliding in characteristic 2 by using moduli spaces of stable marked curves. In a sense, one forces the wild ramification pattern to deform to a tame one already in characteristic 2, which is an easier task than studying an immediate deformation to mixed characteristic. As a result, a valuable information is added to the functor when $p$ is not invertible, and I view it as a new version of the wild ramification/level structures. This information possesses moduli of its own, so this approach naturally produces moduli spaces with reducible fibers over wild primes, which is usually crucial in order to obtain nice enough, e.g. log regular, models.
\end{rem}

\subsubsection{Modular interpretation}
Although an explicit description of the space $\ocH$ was obtained in \cite{AO}, it remained unclear if it possesses a natural modular interpretation, and this question was recently solved by Hippold in \cite{Hip} in the case of $\deg=p$ (i.e. a nice space of covers of degree $p$ is constructed over $\ZZ[\frac{ 1}{(p-1)!}]$), providing a first general definition which deals with the case when wild ramification happens. The main obstacle was that in the wild case, including already $\ocH_{2,4}$, there might be components in the $\FF_p$-fiber on which the cover is inseparable. The reduction loses information in such a case: not any such a cover can be lifted from characteristic $p$, and the location of the ramification divisor on inseparable components plays a crucial role and provides a new moduli over $\FF_p$. A central tool in \cite{Hip} is to enrich a functor further by assigning a meromorphic differential form to each inseparable component so that its zeros detect the specialization of the ramification divisor on the component.

\begin{rem}
\cite{Hip} continued the research of \cite{CTT} and \cite{BT} where the different and the stable reduction of covers of degree $p$ were constructed over a valuation ring. The differential form appeared in \cite{BT} as a scaled reduction of the different, and it also has an interpretation as Kato's refined Swan conductor. In fact, this series of papers in Berkovich geometry has started with the study of stable reduction of Kummer covers in the mixed characteristic -- another instance of the same problem as was studied by Abramovich-Oort.
\end{rem}

\subsubsection{Modular curves}
The example of Abramovich-Oort has also an interpretation from a different world: $\cH_{2,4}$ is just the modular curve $X(2)$ because giving such a cover with fixed ramification is the same as giving the (generalized) elliptic curve $E$ with a fixed 2-torsion subgroup $E[2]$. A simple computation shows that the compactification $\ocH$ is the blowup of the Katz-Mazur compactification $\tilcX(2)$ at the supersingular point. As we saw above, for $p=2$ this blowup has a natural modular interpretation, so one can wonder if this can be generalized for other $p$. It turns out that there exists a simple non-modular definition which follows the non-collision principle -- do not allow marked points in characteristic zero to collide modulo $p$, even if they want to. To achieve this we just define $\cX(N)$ to be the closure of $Y(N)$ in $\ocM_{1,N^2}=\ocM_{1,\Gamma}$, where $\Gamma=(\ZZ/N\ZZ)^2$.

The integral models $\cX(N)$ of the modular curves are not directly related to Hurwitz spaces of maps to $\PP^1$, though they are components of the Hurwitz spaces that parameterize isogenies of order $N^2$ between elliptic curves. We think that they are fascinating moduli spaces which can be viewed as moduli spaces of elliptic curves with wild level structure and (to the best of our knowledge) were not described earlier. We hope that the methods and ideas developed in this project will be also useful to study wild level structures on curves of higher genus and on abelian varieties, but of course one should first study the simplest cases of elliptic curves and their moduli spaces. In particular, in this case many additional tools (such as the well studied regular model of Katz-Mazur) are available. Even in this setting, we still decided to split this project to two papers. This paper is devoted to defining $\cX(N)$ and the objects it parameterizes -- prestable (but not semistable) $\Gamma$-equivariant elliptic curves stabilized by $\Gamma$ and various properties of the action needed to isolate $\cX(N)$ among the other components. We test these properties on the case of $N=p$, which is worked out with all details in this paper. In particular, we study what other components are, and explain why we need to impose various restrictions on the action to get rid of the non-smoothable components. This study is not needed to prove our main results about $\cX(p)$ but illustrates our choices and the wide context we are working with. In the sequel paper we will use the foundations developed here to study the case of an arbitrary $N$. Finally, I hope that quite analogous methods will also extend to level structures on abelian varieties, but at this stage it is a bit premature to discuss this.

\subsection{Overview of the paper}
We start with introducing the stacks $\cX(N)$ in \S\ref{sec2} as the closure of $Y(N)$ in $\ocM_{1,\Gamma}$ and observing that over $\ZZ[1/N]$ one just recovers the Deligne-Rapoport compactification. Then we provide in Lemma~\ref{tautology} the tautological interpretation of $\cX(N)$ as the moduli space of smoothable $\Gamma$-stabilized genus-1 curves $(E,\Gamma)$ and start to study the main question of the paper -- what is a constructible/explicit definition of such a functor? The general principle is very simple -- consider a smoothing, and study how all properties of the generic fiber $(E_\eta,\Gamma=E_\eta[N])$ restrict to the closed fiber, what are the restrictions they impose. We provide a whole list of such properties in \S\ref{redsec}
\begin{itemize}
\item[(1)] The $\Gamma$-action by translations is compatible with $\Pic^0$ and a global derivation.
\item[(2)] The isogeny of multiplication by $p$.
\item[(3)] A group structure.
\end{itemize} 
It turns out that versions of (1) and (2) suffice to cut out $\cX(p)$ (and also $\cX(N)$ as we will prove in the sequel paper), while the group structure is not needed and in fact can be reconstructed from (1).

Of course, the choice of properties that characterise smoothability can be non-unique and to some extent is a matter of taste or convenience, but at least we will show on examples that our list is minimal and no property can be removed, see Remarks~\ref{phirem} and \ref{flatrem}(ii). Note that $\Gamma$ acts on $\ocM_{1,\Gamma}$ by translating the marked points, and this fixes the points of $Y(N)$ because the action on $\Gamma=E[N]$ comes from the action on $E$ itself. It follows that $\cX(N)\subseteq(\ocM_{1,\Gamma})^\Gamma$ and hence $\cX(N)$ parameterizes $\Gamma$-equivariant $\Gamma$-labelled curves $(E,\Gamma)$, that are called {\em $\Gamma$-marked} in \S\ref{gammasec}. We view this condition as the most important rigidifying property, which provides a first approximation of smoothability.

One of the advantages of $\Gamma$-equivariance is its amenability for study by quite standard tools, and in \S\ref{sec3} we apply them to the space $(\ocM_{1,\Gamma})^\Gamma$ of $\Gamma$-marked curves when $N=p$. The situation over $\ZZ[1/p]$ is quite simple, so we mainly concentrate on studying the $\FF_p$-fiber. First, we classify the geometric points and show that on the set-theoretic level the smoothability imposes just one more restriction of supersingular nature beyond $\Gamma$-equivariance. Then we study deformations and show that there do exist non-smoothable deformations of smoothable $\Gamma$-marked curves. This provides concrete examples justifying two restrictions on the action we impose later to reduce $(\ocM_{1,\Gamma})^\Gamma$ further to $\cX(p)$. In particular, we interpret some non-smoothable deformations along the ordinary locus as those that point in the direction of non-semistable components of $(\ocM_{1,\Gamma})^\Gamma$ over $\ZZ[1/p]$. Such components did not appear in \cite{DR}, but in our setting they must be ruled out by an additional condition. We finish this section with an explicit description in \S\ref{fixedloci} of a few simpler fixed locus spaces, such as $(\ocM_{0,\oGamma_0})^{\Gamma_0}=\Spec(\ZZ[\xi_p])$, where $\Gamma_0=\FF_p$ acts on $\oGamma_0=\FF_p\cup\{\infty\}$ by translations. This geometrically explains some phenomena with non-liftable deformations in terms of ramification over $\ZZ$.

Main results are proved in \S\ref{4sec}. First, we introduce a condition $(\phi_p)$, which postulates existence of a morphism $\phi_p$ which imitates the restriction of the multiplication-by-$p$ isogeny from a smooth generic fiber. It is a bit subtle because $\phi_p$ reduces the level to $N/p$, but in the case of $N=p$ it can be reformulated just by saying that the wild level-$p$ structure $\Gamma\into E$ in our sense restricts to the Drinfeld's level structure on the semistable core of $E$. The obtained closed substack $(\ocM_{1,\Gamma})^{\Gamma,\phi_p}$ of $(\ocM_{1,\Gamma})^\Gamma$ already has the ``right'' set of geometric points, but still possesses non-smoothable deformations over $p$. Therefore we introduce another condition on the action, denoted ($\flat$) and called {\em affinity}, which imitates the fact that the action on a smooth generic fiber $E_\eta$ (in a smoothing) is by translations, and hence preserves the invariant differential form $\partial_\eta$ and the group $\Pic^0_{E_\eta}$. The second condition is straightforward, and only essential when $N\le 3$, while the first condition is critical. The restriction of $\partial_\eta$ onto an irreducible component $Z$ of the closed fiber can vanish, so for each $Z$ we rescale $\partial_\eta$ by some function, and then restrict to $Z$ obtaining a {\em scaled derivation} on $Z$. We show that $\partial_Z$ is unique up to a unit, and the affinity condition means that $\Gamma$ preserves it. The main result proved in Theorem~\ref{mainth} states that $\cX(p)=(\ocM_{1,\Gamma})^{\Gamma,\flat,\phi_p}$, this scheme is toroidal with respect to $Y(p)$ and can be obtained by blowing up the supersingular locus in $\tilcX(p)$.

The paper contains an appendix where some material about stable curves and their deformations is collected. All these facts are quite standard and well known to experts, and the only novelty is in the presentation of the stabilization (or folding) process and some terminology, including the notion of crown and the usage of the word ``folding'' in addition to the usual retraction/contraction. Our main goal here is to organize this background material in a form that will be convenient for referencing in this and sequel papers of the project. By saying ``folding'' we want to stress the point of view that a genus-1 pointed prestable $S$-curve ``knows'' about all potential torsion, and when working on the level $N$ we choose specific models by folding or unfolding components stabilized by an appropriate torsion.

\subsection{Conventions}\label{convsec}

\subsubsection{The base}
We will work over a qcqs $S$, which can be a scheme or a DM stack. Due to this, ``locally'' on $S$ will always mean Zariski or \'etale locally, respectively. When we say ``geometric'' points and ``\'etale'' covers this also includes the case of schemes. Of course, everything can be adapted further to Artin stacks and smooth topology, but we leave this to the interested reader.

\subsubsection{Prestable marked curves}
By a {\em prestable marked} curve over a base stack $S$ we mean a pair $(C_S,D_S)$, where $f\:C_S\to S$ is a proper nodal curve with geometrically connected fibers (i.e. $f_*\cO_{C_S}=\cO_S$) and $D_S\into C^\sm_S$ is a divisor which is disjoint from the nodal set and is \'etale over $S$. If one also fixes an isomorphism $D_S=\Gamma_S=\Gamma\times S$ for a finite set $\Gamma$, the curve is called {\em $\Gamma$-labelled}, and the term {\em $n$-pointed} is used when $\Gamma=\{1\. n\}$. When $S$ is clear from the context we may omit it from the notation: $(C,D)$, etc.

\subsection{Acknowledgements}
This research started quite unusually, at least, for my practice. I was giving a talk at the university of Toronto about the compactified Hurwitz spaces of $p$-covers based on the work of Abramovich--Oort, the works \cite{CTT} and \cite{BT} on the non-archimedean approach to similar questions and the recent work \cite{Hip} by Hippold. This was too much material which used too many different techniques, and the talk was a failure. Then I modified the talk by removing anything related to Berkovich geometry and suggesting a natural (but not historical) way to discover the compactification functor as a continuation of the work of Abramovich-Oort. This approach produced a successful talk at the University of Pennsylvania, and led me to realize soon after the talk that similar principles can be used to construct a natural new compactification/model of a modular curve and describe it in a functorial way. I am grateful to both institutions for the invitations. In addition, during my work when some details still were quite vague I discussed this construction with many experts, including Pierre Deligne, Bhargav Bhatt, Mingjia Zhang, Ilya Tyomkin, Ofer Gabber, Ahmed Abbes, Dustin Clausen, K\k{e}stutis \v{C}esnavi\v{c}ius, Peter Scholze, Alexander Beilinson, Kazuya Kato, Stefan Wewers, Florian Pop, and I am very grateful to them for their patience. Special thanks to Dan Abramovich for the careful reading of a draft version and pointing out numerous inaccuracies.

AI Gemini Pro was used to draw some diagrams faster and more impressively than I would be able to do myself.

\section{Torsion-stabilized models of modular curves}\label{sec2}

\subsection{The model $\cX(N)$}

\subsubsection{$\Gamma$-stable curves}
Fix an integer $N>1$ and the group $\Gamma=(\ZZ/N\ZZ)^2$ with generators $\gamma_1=(1,0)$ and $\gamma_2=(0,1)$. By a {\em $\Gamma$-pointed stable $S$-curve} or just a {\em $\Gamma$-stable $S$-curve} we mean a stable $S$-curve $(C,D)$ with a trivialization $D=\Gamma_S=\Gamma\times S$. The moduli space $\ocM_{g,\Gamma}$ of $\Gamma$-stable curves is a smooth and proper DM stack over $\ZZ$, which is canonically isomorphic to $\ocM_{g,N^2}$, once a bijection $\Gamma\toisom\{1\. N^2\}$ is fixed.

\subsubsection{The modular curves $Y(N)$ and $X(N)$}
The moduli space $Y(N)$ classifies elliptic curves with $\Gamma$-trivialized (or rigidified) $N$-torsion or just {\em $\Gamma$-leveled} elliptic curves. In other words, if $S$ is a scheme, then $Y(N)(S)$ is the groupoid of pairs $(E_S,\Gamma_S\toisom E_S[N])$, where $E_S$ is an elliptic $S$-curve. Such a set can be non-empty only when $N$ is invertible on $S$, so $Y(N)$ is a $\ZZ[1/N]$-scheme, and even a $\ZZ[1/N,\mu_N]$-scheme, as follows from the existence of the Weil pairing.

Deligne-Rapoport constructed in \cite{DR} a modular $\ZZ[1/N]$-compactification $X(N)$ which classifies $\Gamma$-leveled generalized elliptic curves. Its cusps correspond to $N$-gons of $\PP^1$'s.

\subsubsection{The model $\cX(N)$}
We can interpret an element $(E_S,\Gamma_S)\in Y(N)(S)$ as a $\Gamma$-stable elliptic $S$-curve, so a natural closed immersion $Y(N)\into\ocM_{1,\Gamma}$ arises. Then we define $\cX(N)$ to be the closure of $Y(N)$ in $\ocM_{1,\Gamma}$. In particular, $\cX=\cX(N)$ is equipped with the tautological family of $\Gamma$-stable genus-1 curves $(E_\cX,\Gamma_\cX)$.

\subsubsection{The vertical compactification}
Let us first study the classical part obtained by restriction to $\Spec(\ZZ[1/N])$. As one might expect, in this case we just obtain a new natural construction of $X(N)$ -- the Deligne-Rapoport compactification of $Y(N)$ over $\Spec(\ZZ[1/N])$.

\begin{lem}\label{Nlem}
Keep the above notation, then $X(N)=\cX(N)\otimes\ZZ[1/N]$.
\end{lem}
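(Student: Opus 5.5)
We aim to compare the two compactifications by constructing a morphism $X(N)\to\ocM_{1,\Gamma}\otimes\ZZ[1/N]$ extending the embedding $Y(N)\into\ocM_{1,\Gamma}$, and showing it is a closed immersion. Since $X(N)$ is proper over $\ZZ[1/N]$ and contains $Y(N)$ as a dense open substack, its scheme-theoretic image will then be the closure of $Y(N)$, i.e. $\cX(N)\otimes\ZZ[1/N]$. Note that closure commutes with the flat base change to $\ZZ[1/N]$, and $Y(N)$ already lives over $\ZZ[1/N]$.

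\emph{Construction of the map.} Take a $\Gamma$-leveled generalized elliptic curve $(E_S,\Gamma_S\toisom E_S^\sm[N])$ in the sense of Deligne--Rapoport, with geometric fibers either elliptic curves or $N$-gons. The level structure gives $N^2$ disjoint sections in the smooth locus, étale over $S$ because $N$ is invertible. Let $D$ be the divisor they form. On an $N$-gon each component $\PP^1$ contains exactly $N$ of the points of $E^\sm[N]=\mu_N\times\ZZ/N$ and two nodes, so it has at least three special points. Hence $(E_S,D)$ is a stable $\Gamma$-labelled genus-1 curve, with no contraction needed. This is functorial and compatible with base change, so it defines a morphism $\iota\colon X(N)\to\ocM_{1,\Gamma}[1/N]$ restricting to the given embedding on $Y(N)$.

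\emph{Closed immersion.} $X(N)$ is proper, so $\iota$ is proper, and it suffices to show that $\iota$ is a monomorphism, i.e. injective on geometric points with isomorphic automorphism groups and injective on tangent spaces. The key input is that a generalized elliptic curve with its $\Gamma$-structure is recovered from the underlying marked curve $(E,D)$. The group law on $E^\sm$ is determined by the point labelled $0$ together with the curve: on $N$-gons it is the unique structure acting by rotations of the polygon, by Deligne--Rapoport. An automorphism of $(E,D)$ fixing all labels fixes the origin and hence is a group automorphism. It is then trivial on $E[N]$, which for $N\ge 3$ forces triviality and for $N=2$ gives the common $\pm1$ on both sides.

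For deformations, deformations of the marked stable curve $(E,D)$ preserve the marked sections. Deligne--Rapoport's deformation theory shows that generalized elliptic structures extend uniquely along deformations of the underlying curve once the identity section is fixed; this is the rigidity of the group structure. The $N$-torsion sections then stay torsion, since they are étale over $S$. Hence $\iota$ is unramified and in fact an isomorphism onto its image on deformations.

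The main obstacle is this last step: the unique extension of the generalized elliptic structure to infinitesimal deformations of the underlying $\Gamma$-stable curve, and the check that being $N$-torsion is a closed and open condition. Alternatively, one can sidestep it by noting that $X(N)$ is normal (indeed smooth over $\ZZ[1/N]$). Then $\iota$ is a proper, injective-on-points birational map onto the closure $\cX(N)[1/N]$. That closure is normal too, as it is smooth over $\ZZ[1/N]$ at the cuspidal points by the explicit local description of $N$-gon smoothings $xy=q$ in $\ocM_{1,\Gamma}$. Zariski's main theorem then gives the isomorphism. I would try this normality argument first.
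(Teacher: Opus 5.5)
Your proposal follows the same route as the paper: extend $Y(N)\into\ocM_{1,\Gamma}$ using the tautological $\Gamma$-leveled generalized elliptic curve over $X(N)$, whose $N$-torsion stabilizes it, and conclude from properness that the image is the closure. The paper simply asserts that this extension is a closed immersion, and your proper-monomorphism argument, via Deligne--Rapoport's uniqueness of the generalized elliptic structure with a given identity section, supplies that missing justification.

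Since this uniqueness holds over an arbitrary base, $\iota$ is directly fully faithful on $S$-points, so no separate tangent-space or $N$-torsion check is needed. You should therefore prefer this route over the normality \emph{sidestep}, which leaves its real difficulty unproved: normality of $\cX(N)\otimes\ZZ[1/N]$ at the cusps. That would have to come from something like the tame fixed-locus argument of Lemma~\ref{fixedlocus}.
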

\begin{proof}
First, the moduli space $X=X(N)$ possesses a tautological family $(E_X,\Gamma_X)$ where $E_X$ is a generalized elliptic curve (a semistable 1-pointed genus-1 curve) and $\Gamma_X$ is its $N$-torsion. By the construction $\Gamma_X$ stabilizes $E_X$. So, the closed immersion $Y(N)\into \ocM_{1,\Gamma}$ extends to $X(N)\into\ocM_{1,\Gamma}$. Since $X(N)$ is a compactification of $Y(N)$ over $\Spec(\ZZ[1/N])$, we obtain that $X(N)=\cX(N)\otimes\ZZ[1/N]$.
\end{proof}

\subsubsection{The tautological functor}
Of course our construction provides a functorial description of $\cX(N)$. We say that a $\Gamma$-stable genus-1 $S$-curve $(E,\Gamma_S)$ is {\em smoothable by a $\Gamma$-leveled elliptic curve} if there exists an \'etale covering $\{S_i\}$ of $S$, closed immersions $S_i\into S'_i$ with integral $S'_i$ and $\Gamma$-stable genus-1 $S'_i$-curves $(E'_i,\Gamma_{S'_i})$ such that the restrictions onto $S_i$ are the pullbacks of $(E,\Gamma_S)$ and the generic fibers are $\Gamma$-labeled elliptic curves. When this cannot cause misunderstanding we will simply say {\em $\Gamma$-smoothable}.

\begin{lem}\label{tautology}
Let $S$ be a scheme and $\cX=\cX(N)$, then $\cX(S)$ is the groupoid of $\Gamma$-stable genus-1 $S$-curves smoothable by $\Gamma$-leveled elliptic curves.
\end{lem}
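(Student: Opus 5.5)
We identify both sides with full subgroupoids of $\ocM_{1,\Gamma}(S)$. Since $\cX$ is a closed substack of $\ocM_{1,\Gamma}$, an object of $\cX(S)$ is a $\Gamma$-stable genus-1 $S$-curve $(E,\Gamma_S)$ whose classifying map $S\to\ocM_{1,\Gamma}$ factors through $\cX$, and such a factorization is unique when it exists. The claim is therefore that $S\to\ocM_{1,\Gamma}$ factors through $\cX$ if and only if $(E,\Gamma_S)$ is $\Gamma$-smoothable. Both conditions are étale local on $S$: for $\cX$ because it is a closed substack, and for smoothability because the definition is already étale local. So we may pass to étale covers freely.

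\emph{Smoothable $\Rightarrow$ in $\cX$.} Suppose we are given $S_i\into S'_i$ with $S'_i$ integral and a $\Gamma$-stable curve $(E'_i,\Gamma_{S'_i})$ whose generic fiber is a $\Gamma$-labelled elliptic curve. Having geometric generic fiber smooth is an open condition, so there is a dense open $U\subseteq S'_i$ over which $E'_i$ is elliptic. The generic fiber is $\Gamma$-leveled, meaning that $\Gamma$ is the $N$-torsion with its group law. Spreading out, after shrinking $U$ the labelling identifies $\Gamma_U$ with $E'_U[N]$ as groups, because the locus where the marked sections satisfy the group law relations with respect to the origin $0\in\Gamma$ is closed and contains the generic point. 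Hence $U\to\ocM_{1,\Gamma}$ factors through $Y(N)$. Since $S'_i$ is integral, $U$ is schematically dense in $S'_i$. As $\cX$ is closed, the map $S'_i\to\ocM_{1,\Gamma}$ factors through the closure of the image of $U$, which lies in $\cX$. Restricting to $S_i$ and descending along the étale cover gives the required factorization for $S$.

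\emph{In $\cX$ $\Rightarrow$ smoothable.} Suppose $S\to\cX$ is given. Take an étale atlas $V\to\cX$ by a scheme; since $\ocM_{1,\Gamma}$ is DM, we can choose it so that, étale locally on $S$, the map $S\to\cX$ lifts to $S_i\to V$. Because $\cX$ is the closure of $Y(N)$, which is an integral open substack, each connected component of $V$ is integral with generic point in the preimage of $Y(N)$. Here we use that $Y(N)$ is dense in $\cX$ and that $V\to\cX$ is étale, hence open and with reduced preimages. Choose $S'_i$ to be the component of $V$ receiving $S_i$, and let $E'_i$ be the pullback of the tautological family $(E_\cX,\Gamma_\cX)$. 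Its generic fiber lies over $Y(N)$ and is therefore a $\Gamma$-leveled elliptic curve, and its restriction to $S_i$ is the pullback of $(E,\Gamma_S)$.

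The catch is that $S_i\to S'_i$ is only a morphism, not a closed immersion as the definition requires. We fix this with a standard trick. Replace $S'_i$ by the integral scheme $S'_i\times\mathbb A^n$, or, more cleanly, use the graph: when $S_i$ is affine and of finite type over $S'_i$, embed $S_i\into\mathbb A^n_{S'_i}$ and pull $E'_i$ back along the projection. The product $\mathbb A^n_{S'_i}$ is still integral, and its generic fiber is the base change of an elliptic curve, hence still a $\Gamma$-leveled elliptic curve.

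I expect this finiteness and closed-immersion adjustment to be the main technical obstacle. A plausible way around it is to note that the definition only requires integral $S'_i$, not finite type. If $S_i$ is not of finite type, one can embed $\Spec A\into\Spec(\cO_{S'_i}[x_a]_{a\in A})$ using a polynomial ring in possibly infinitely many variables. This ring is still a domain, so the argument goes through.
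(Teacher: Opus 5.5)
Your strategy is the paper's: its proof consists of exactly these two tautological implications, stated in two lines. Your direction ``smoothable $\Rightarrow$ in $\cX$'' is a correct expansion of it, using schematic density of the elliptic $\Gamma$-leveled locus $U$ in the integral $S'_i$, closedness of $\cX$, and étale descent. Your graph/polynomial-ring trick for producing a closed immersion $S_i\into S'_i$ is also fine; it is the same device used at the end of the proof of Lemma~\ref{smoothabilitylem}.

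What fails is the claim that every connected component of an étale atlas $V\to\cX$ is integral. Étaleness and density of $Y(N)$ only give that $V$ is reduced and that all generic points of $V$ lie over $Y(N)$. Connected components of arbitrary étale charts are irreducible only if $\cX$ is geometrically unibranch. Nothing of this kind is available for general $N$ at this stage; for $N=p$ it follows only a posteriori from the regularity in Theorem~\ref{mainth}. Passing to a single irreducible component does not rescue the argument. Suppose a connected $V$ has two irreducible components through a point $v$, and take $S_i=\Spec(\cO_{V,v}/m_v^n)$ with $n\gg 0$. By Krull's intersection theorem the inclusion $S_i\to V$ factors through no irreducible component of $V$. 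On the other hand, any morphism from an integral scheme to $V$ does factor through one. So no integral $S'_i$ over $V$ can receive $S_i$ compatibly, and your construction breaks down at non-unibranch points of $\cX$, if there are any.

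The missing ingredient is an integral chart of $\cX$ that need not be étale. For example, use the standard presentation $\ocM_{1,\Gamma}=[H/\mathrm{PGL}_m]$ coming from the very ample sheaf $\omega_{(E,\Gamma)}^{\otimes 3}$, and put $H_\cX=H\times_{\ocM_{1,\Gamma}}\cX$. This is a $\mathrm{PGL}_m$-torsor over the reduced and irreducible stack $\cX$. Hence it is smooth over $\cX$ with geometrically irreducible fibres, and therefore integral, as are its non-empty affine opens. By flatness its generic point lies over $Y(N)$. The pullback of this torsor to $S$ is trivial étale locally, so $S\to\cX$ lifts étale locally to an affine open $W\subseteq H_\cX$. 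Your embedding $S_i\into\mathbb{A}^I_W$, together with the pullback of the universal family, then gives the required smoothing. When $\cX$ is a scheme, its affine opens already suffice, since Zariski opens of an integral scheme are integral.
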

\begin{proof}
The family over $\cX$ is $\Gamma$-smoothable because its generic fiber is an elliptic curve with $\Gamma$-marked $N$-torsion. Conversely, if $(E,\Gamma_S)$ is a $\Gamma$-smoothable $\Gamma$-stable genus-1 $S$-curve, then we should prove that the induced morphism $S\to\ocM_{1,\Gamma}$ lands in $\cX$. This can be checked \'etale locally, so we can replace $S$ by $S_i$ as in the definition of $\Gamma$-smoothability. Then the map $S_i\to\ocM_{1,\Gamma}$ is the restriction of the map $S'_i\to\ocM_{1,\Gamma}$ induced by $(E_i,\Gamma_{S'_i})$, and this map lands in $\cX$ because the generic fiber of the latter family is an elliptic curve with $\Gamma$-marked $N$-torsion.
\end{proof}

\subsubsection{Reduction data}\label{redsec}
The tautological $\Gamma$-smoothability condition is not really constructible, so we use this description only for the sake of semantical convenience. The real task now is to express $\Gamma$-smoothability in concrete terms. A general principle is very simple -- restrict enough data/structure from a smooth generic fiber so that these data guarantee $\Gamma$-smoothability. Here are a few natural directions that will be used or discussed in the sequel.

\begin{itemize}
\item[(1)] {\em  $\Gamma$-action.} Since $\Gamma$ acts by translation on the generic fiber $(E_\eta,\Gamma_\eta)$ of $(E_{\cX},\Gamma_\cX)$ its action extends to the whole $(E_\cX,\Gamma_\cX)$. This $\Gamma$-equivariance is the most basic property which will be combined with subtler ones. We say that the action of $\Gamma$ on $E_\eta$ is {\em affine} referring to the action being by translation. This property can be also interpreted as the triviality of the induced action on $\Pic^0_{E_\eta}$, and it imposes two restrictions on the action on the closed fibers that we also call {\em affinity}:
\begin{itemize}
\item[(a)] {\em Invariance of scaled vector fields.} For each connected component $Z$ of a fiber $E_s$ the action of the stabilizer $\Gamma_Z$ preserves any vector field $\partial_Z$ obtained by extending, rescaling and restricting onto $Z$ of an invariant vector field on $E_\eta$. (There is also a dual construction with an invariant differential form, but it is more convenient in this case to work with the vector fields.)
\item[(b)] {\em Picard invariance.} The induced $\Gamma$-action on $\Pic^0_{E_\cX/\cX}$ is trivial because $\Pic^0_{E_\cX/\cX}$ is flat over $\cX$ and the action on the generic fiber is affine. In fact, this condition is subsumed in (a) when $N>3$, but is needed to eliminate exotic actions when $N=2,3$.
    \end{itemize}
\item[(2)] {\em Multiplication by $p$.} For each $p|N$ the multiplication by $p$ on $E_\eta$ lowers the level and hence induces a morphism of marked curves $(E_\eta,E_\eta[N])\to (E_\eta,E_\eta[N/p])$ rather than an endomorphism. Thus, a morphism $\phi_p\:(E_\cX,\Gamma)\to(E'_\cX,p\Gamma)$ arises, where $E'_\cX$ is the retraction of $E_\cX$ stabilized by $p\Gamma$. This morphism is compatible with $\Gamma$ and acts by multiplication by $p$ on $\Pic^0_{E_\cX/\cX}$ and $\Gamma_\cX$.
\item[(3)] {\em The group structure.} The group structure of $E_\eta$ extends to an $\cX$-group structure on the crown $\brE_\cX$ (or the smooth part of the bottom layer of $E_\cX$, see \S\ref{crownsec}).
\item[(4)] {\em Log structures.} The usual log structures of $\ocM_{1,\Gamma}$ and the universal curve over it can be restricted to $\cX$ and $E_\cX$.
\end{itemize}

We will see in \cite{X(N)} that the $\Gamma$-smoothability is already achieved by imposing conditions described in (1) and (2). In this paper we study the case of $N=p$, and then (2) reduces to a very simple property about retraction onto the core and its relation to Drinfeld level structure (which are introduced in \cite[Chapter~1]{KM}).

One can prove (this is not immediate and uses the $\Gamma$-action) that the group structure on the smooth generic fiber induces a group structure on the crown with an affine action $\brE\times E\to E$ on $E$. Such a structure extends generalized elliptic curves to the non-semistable case, and we suggest to call it a {\em folding elliptic curve}. In particular, because one can show that for any $\Gamma$-equivariant folding (or retraction) $E\to E'$, the target acquires a natural folding elliptic curve structure too, and the folding map $\brE(S)\to\brE'(S)$ is a homomorphism. We plan to study folding elliptic curves (without $\Gamma$-marked structure) in a separate paper since the group structure is not used in our approach to description of $X(p)$.

Finally, we also prefer not to involve the log structures in order to solve the problem within the category of schemes. I do expect, however, that the log structures provide a more conceptual interpretation of part of the data and their usage can shorten some arguments and replace a part of the affinity condition. A logarithmic description of $\cX(N)$ will be studied elsewhere.

\subsection{$\Gamma$-action}\label{gammasec}

\subsubsection{The action}
Note that $\cM_{1,\Gamma}$ is a scheme for $N>2$, but $\ocM_{1,\Gamma}$ is a stack. For example, there exists a $\Gamma$-stable curve with a genus-1 component having one node and no marked points, and such a component possesses non-trivial automorphisms fixing the node. The group $\Gamma$ acts on the whole stack $\ocM_{1,\Gamma}$ just by the action on the set of marked points. Intuitively, it is of a ``set theoretic nature'', so one does not have to use the general (and more subtle) theory of group actions on stacks. The way to formalize this is to observe that the morphism $\ocM_{1,\Gamma}\to\ocM_1$ is projective and the action is fiberwise (the action is $\Gamma$-equivariant once the target is provided with the trivial action). So, one can describe this action in terms of the usual descent and admissible $\Gamma$-actions on schemes.

\subsubsection{The fixed point locus}\label{fixedsec}
We can thus use descent to define the fixed locus $(\ocM_{1,\Gamma})^\Gamma$, and in our case it is a closed substack, as one should expect in the case of actions of ``set theoretic origin''. In particular, an $S$-point $(E_S,\Gamma_S)$ of $\ocM_{1,\Gamma}$ is $\Gamma$-fixed if and only if there exists an $S$-action of $\Gamma$ on $E_S$ which extends the translation action of $\Gamma$ on $\Gamma_S$.

\subsubsection{$\Gamma$-marked curves}
A $\Gamma$-equivariant $\Gamma$-stable $S$-curve $(E_S,\Gamma_S)$ will be called {\em $\Gamma$-marked}. As we saw, $(\ocM_{1,\Gamma})^\Gamma$ is the moduli space of such objects. In the case of an integral $S$, we say that $(E_S,\Gamma_S)$ is {\em generically smooth} if $(E_\eta,\Gamma_\eta)$ is a $\Gamma$-leveled elliptic curve. Thus, $\cX(N)$ is the substack of $(\ocM_{1,\Gamma})^\Gamma$, which classifies $\Gamma$-smoothable $\Gamma$-marked curves. The substack $(\ocM_{1,\Gamma})^\Gamma$ provides an initial approximation of $\cX(N)$, though it is still quite larger. Let us first see what it gives in the classical setting.

\begin{lem}\label{fixedlocus}
The modular curve $X(N)$ is a connected component $C$ of the fixed locus $(\ocM_{1,\Gamma})^\Gamma\otimes\ZZ[1/N]$, and the only connected component which satisfies the following two conditions:
\begin{itemize}
\item[(i)] Semistability: $C$ lies in the semistable locus $\ocM^\ss_{1,\Gamma}$ which classifies pairs $(E_S,\Gamma_S)$ with a semistable $E_S$.
\item[(ii)] Picard invariance: the action of $\Gamma$ on $E_C$ induces the trivial action on $\Pic^0_{E_C/C}$.
\end{itemize}
\end{lem}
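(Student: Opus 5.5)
The plan is to show three things over $S_0=\Spec(\ZZ[1/N])$: that $X(N)$ is open and closed in $F=(\ocM_{1,\Gamma})^\Gamma\otimes\ZZ[1/N]$, that it satisfies (i) and (ii), and that any connected component $C$ satisfying (i) and (ii) lies in $X(N)$.

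\emph{Step 1: $X(N)\subseteq F$ and (i), (ii) hold.} By Lemma~\ref{Nlem} there is a closed immersion $X(N)\into\ocM_{1,\Gamma}\otimes\ZZ[1/N]$. On a generalized elliptic curve $E_X$ the group $\Gamma_X=E_X[N]\subseteq E_X^\sm$ acts by translations, compatibly with the labelling. Hence $X(N)$ lands in $F$, and (i) holds because $E_X$ is semistable. Translations act trivially on $\Pic^0_{E_X/X}$: for a generalized elliptic curve, $\Pic^0$ is identified with $E^\sm$, which is a commutative group, so conjugation by translations is trivial. Alternatively, argue fiberwise on the dense open $Y(N)$ and use that $\Pic^0$ is separated and flat, so triviality of the action extends.

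\emph{Step 2: the converse.} Let $C\subseteq F$ satisfy (i) and (ii), and let $s$ be a geometric point of $C$ with fiber $(E,\Gamma)$. By (i), $E$ is semistable and $\Gamma$-stable. The order of $\Gamma$ is invertible, so the action is tame. Let $Z$ be the component of $E$ containing the origin $0\in\Gamma$; it is a smooth genus-1 curve or a cycle of $\PP^1$'s. Since $\Gamma$ acts freely on the marked points, each component carries the same number of marked points and the stabilizers act freely on them.
\begin{itemize}
\item Condition (ii) says $\Gamma$ acts trivially on $\Pic^0_E$, which is $E$ itself in the smooth case and $\GG_m$ (the torus of the cycle) in the nodal case.
\item An automorphism of a genus-1 curve acting trivially on $\Pic^0$ is a translation. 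In the nodal case, an automorphism acting trivially on the character group of the cycle is a rotation composed with a torus translation, with no reflection.
\end{itemize}
Hence the action of $\Gamma$ is by translations for a generalized elliptic curve structure with origin $0$. The map $\Gamma\to E^\sm$, $\gamma\mapsto\gamma(0)$, is then an injective homomorphism, so its image lies in $E^\sm[N]$. It is a subgroup of order $N^2$ meeting every component, since the points are $\Gamma$-stabilizing. This is exactly a Deligne--Rapoport $\Gamma$-level structure, so $s\in X(N)$. Thus $C\subseteq X(N)$ set-theoretically.

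\emph{Step 3: scheme-theoretic equality.} I would run the argument of Step 2 in families over $C$ and check smoothness. By tameness the deformation theory of $F$ at $s$ is the $\Gamma$-invariants of the deformation theory of $\ocM_{1,\Gamma}$, so $F$ is smooth over $S_0$. Hence $X(N)$, which is smooth of relative dimension 1, is open in $F$: at a point of $X(N)$ the $\Gamma$-invariant first-order deformations form a one-dimensional space, matching $\dim X(N)$. It is also closed, being proper, so $X(N)$ is a union of connected components of $F$. Since $X(N)$ is connected over each geometric point of $\ZZ[1/N,\mu_N]$, with one component per $\mu_N$-choice, I would phrase "connected component" up to the Weil-pairing decomposition. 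Combined with Step 2, this gives $C=X(N)$.

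\emph{Main obstacle.} I expect the hardest step to be the comparison of invariant tangent spaces in Step 3. One must show that the $\Gamma$-invariant deformations at a cusp, an $N$-gon with $\Gamma$ acting by rotation, are one-dimensional, coming only from smoothing the nodes symmetrically. I would compute this from the local-to-global sequence: $H^1(T)$ of each rational component with 2 nodes and $N$ marked points, modulo the stabilizer, and the node smoothing parameters, which are permuted transitively and hence contribute one invariant dimension.
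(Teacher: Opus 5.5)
Your proposal is correct, but it shows that $X(N)$ is open in $F=(\ocM_{1,\Gamma})^\Gamma\otimes\ZZ[1/N]$ by a different mechanism from the paper.

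\textbf{The paper's route.} It avoids any tangent-space computation. It uses three inputs: $F$ is regular (a tame fixed locus in a smooth stack), the semistable locus is open by \cite{DR}, and Picard invariance is a discrete condition there. Together these show that (i) and (ii) cut out an open and closed $Z\subseteq F$. Your Step 2, which is exactly the paper's last paragraph, makes the closed immersion $X(N)\into Z$ bijective. It is then an isomorphism because $Z$ is reduced.

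\textbf{Your route.} You show that the $\Gamma$-invariant tangent spaces of $F$ along $X(N)$ are one-dimensional, so $X(N)\into F$ is open. Step 2 is then needed only for uniqueness. This avoids any openness or closedness statement about conditions (i) and (ii), and it shows directly that $F$ is smooth of relative dimension one along $X(N)$. Given this computation, you do not even need smoothness of all of $F$: flatness of $X(N)$ and Nakayama suffice. The cost is the cusp computation, which does go through:
\begin{itemize}
\item $H^0(\PP^1,\Omega^{\otimes 2}(0+\infty+\mu_N))$ is spanned by $t^j\,dt^{\otimes 2}/(t(t^N-1))$ with $0\le j\le N-2$, and $\zeta\in\mu_N$ acts on these by $\zeta^{j+1}\neq 1$. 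So the component term vanishes.
\item The node stabilizer $\mu_N$ acts on $T_P$ by $\zeta^{-1}\zeta=1$, so the single orbit of nodes gives one invariant direction.
\item $\partial=0$ by tameness.
\end{itemize}

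\textbf{Two small corrections.}

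First, you need not weaken the statement ``up to the Weil-pairing decomposition''. $X(N)$ is proper and smooth over the connected base $\Spec(\ZZ[1/N,\mu_N])$ with geometrically connected fibres, so it is connected. The $\varphi(N)$ components over $\overline{\QQ}$ are Galois conjugate and are not separately defined over $\ZZ[1/N]$.

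Second, for an $m$-gon with $m>1$ one has $\Pic^0=\GG_m\neq E^\sm$. Moreover, the Picard-trivial automorphisms form $\GG_m^m\rtimes\ZZ/m\ZZ$, since each component can be rescaled independently. So ``Picard-trivial implies translation'' is false for the bare curve.
\begin{itemize}
\item In Step 1, use your density argument instead.
\item In Step 2, add a short argument. Transitivity on marked points and stability make $\Gamma$ act transitively on the components, so $m\mid N$. The stabilizer of a component embeds into $\GG_m$ and acts simply transitively on its $N^2/m$ marked points, so it is cyclic of order $N^2/m\le N$. Hence $m=N$. Transporting a coordinate of one component around the cycle by a lift of the rotation (which has order $N$) then puts the action in translation form.
\end{itemize}
The paper's own wording glosses over this same point.
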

\begin{proof}
Since $\ocM_1$ is regular, $\ocM_{1,\Gamma}\to\ocM_1$ is smooth and the action is tame once $N$ is inverted, $(\ocM_{1,\Gamma}\otimes\ZZ[1/N])^\Gamma$ is regular (e.g. see \cite[Proposition~4.2]{ExpVI}). The semistability condition is open by \cite{DR}. Automorphisms of generalized elliptic curves were described in \cite{DR}, and it follows that inside the semistable locus the Picard invariance is equivalent to the condition that the embedding $\Gamma\into\Aut_C(E_C)$ lands in the subgroup $\brE_C$, which is of finite index. Here and in the sequel the crown $\brE$ is as defined in \S\ref{crownsec}, so for a semistable genus-1 $E$ one simple has that $\brE=E^\sm$. This is a discrete condition, and hence, the Picard invariance specifies a closed and open subscheme $Z\subseteq\ocM^\ss_{1,\Gamma}$.

It now suffices to prove that the embedding of sets $X(N)\subseteq Z$ is an equality. If $x\in Z$, then $E_x$ is semistable, and hence any automorphism which preserves $\Pic^0_{E_x}$ is a translation by an element of the group $\brE_x$. Therefore, the $\Gamma$-action is through an embedding $\Gamma\subset \brE_x$ and it preserves the set of marked points $\Gamma_x$. This implies that $\Gamma_x=\brE_x[N]$, that is, $x\in X(N)$.
\end{proof}

We will not need the following complement.

\begin{rem}\label{nopicrem}
The Picard invariance condition can be removed when $N>3$ because in this case any embedding $\Gamma\subset\Aut(E_x)$ automatically lands in $\brE_x$, as can be checked by an explicit inspection. For example, if 6 is invertible, then $\Aut(E_x)/\brE_x$ is cyclic of order 2 if $j\notin\{0,1728\}$ and is of order 6 or 4 otherwise.
\end{rem}

Here are examples of other components in the fixed locus. They are not exhaustive but illustrate main ideas.

\begin{exam}\label{compexam}
(i) For $N=2$ any generalized elliptic curve $E$ over an algebraically closed field possesses an action of $\Gamma$ such that $\gamma_1$ induces the negation on $\Pic^0$. For example, one can take the action generated by the negation in $\brE$ and the translation by $P\in\brE[2]$ (this does not work if $E$ is supersingular in characteristic 2, but there are other actions then). Thus, there are other connected components in $(\ocM^\ss_{1,\Gamma})^\Gamma$ and there even exists a three-dimensional component coming form the $\Gamma$-marked curves of the form $(E,(Q,-Q,P+Q,P-Q))$, where $(E,O)$ is an elliptic curve and $Q$ varies (and the reflection fixes $O$ and $P$). For $N=3$ an analogous example exists only for the curve with $j=0$ and complex multiplication by $\ZZ[\xi_3]$.

(ii) Let $E$ be a genus-1 curve over $\overline{\QQ}$ whose components are an elliptic curve
$E_0$ and $\PP^1$'s $Z_0\.Z_{N-1}$ intersecting $E_0$ at the nodes $P_0\.P_{N-1}$ given by $P_i=iP_1$, where $P_1$ is of order $N$. Choose a coordinate $t_i$ on each $Z_i$ so that the node $P_i$ is at infinity and let $\gamma_{ij}$ be given by $t_i=\xi_N^j$ for $0\le i,j<N$. The action of $\Gamma$ on $E$ is generated by two elements: $\gamma_1$ translates $E_0$ by $P_1$ and permutes $Z_i$ accordingly, $\gamma_2$ acts by multiplication by $\xi_N$ on each $Z_i$ and fixes $E_0$. In the sequel, this type of curves is called Igusa case, see figure (\ref{types2}).
\end{exam}

\begin{rem}
(i) A pathology analogous to the first example also shows up in the definition of generalized elliptic curves, and it is the source of the condition in \cite{DR} that translations act by rotations on the $N$-gon (no inversions). We used the Picard invariance to get rid of such cases, and this will also be useful in the non-semistable case.

(ii) The second example does satisfy Picard invariance and it is ruled out in \cite{DR} by the semistability condition. When one has to work with non-semistable curves, as we will have to, one should use some other natural conditions to eliminate such cases. For example, either of the two conditions we will work with -- affinity or compatibility with Drinfeld level structure on the core, eliminates all not semistable components over $\ZZ[1/N]$.

(iii) When $N$ is not invertible the situation becomes substantially more complicated in various aspects. The fixed locus is very far from being regular, its ordinary components lie in the closure of a few different components over $\ZZ[1/N]$, while the supersingular ones are generically non-reduced. In particular, set theoretic conditions do not suffice to describe $\cX(N)$ inside the fixed locus. In this case only the combination of the two conditions will suffice to cut out $\cX(N)$.
\end{rem}

\section{$\Gamma$-marked genus-1 curves for $N=p$}\label{sec3}
Now let us restrict the generality to the main test case of this paper -- $N=p$ and $\Gamma=(\ZZ/p\ZZ)^2$. In this section we will study the moduli space $(\ocM_{1,\Gamma})^\Gamma$ of $\Gamma$-marked genus-1 curves as well as more basic moduli spaces and fixed loci. We will use the terminology of \S\ref{g1sec} (and the appendix).

\subsection{Geometric points}

\subsubsection{Canonical subgroups}
Let $\os=\Spec(k)$ be a geometric point of $(\ocM_{1,\Gamma})^\Gamma$ of characteristic $p$ and let $(E,\Gamma_k)$ be the corresponding $\Gamma$-marked $k$-curve. We also assume that the Picard invariance condition is satisfied  (which is only essential for $p=2,3$, see Remark~\ref{nopicrem}). The curve $E$ is not semistable as otherwise $\Gamma=\brE[p](k)$ would be at most of cardinality $p$. Thus, $E$ contains leaves (unstable components) and they all are stabilized by $\Gamma$. It follows that $\Gamma$ acts transitively on the set of leaves $\{Z_i\}$ and $\Gamma\subset\brE$. Let $Z_0$ be the leaf containing $0$ and $\Gamma_0=\Gamma\cap Z_0$. This gives a filtration $0\subsetneq \Gamma_0\subseteq\Gamma$. If $\Gamma_0=\ZZ/p\ZZ$ we call it the {\em canonical subgroup}, otherwise we say that $(E,\Gamma_k)$ is {\em deeply supersingular}.

\subsubsection{Classification in characteristic $p$}\label{classec}
Since $\Gamma_0$ acts on $Z_0$ and fixes the node, it acts through the group $\Aut(\PP^1_k,\infty)=\GG(k)$, where $\GG=\GG_a\rtimes\GG_m$. Since $\GG_m(k)$ contains no $p$-torsion, the action is through $\GG_a$ and there exists a choice of the coordinate $t$ on $Z_0$ so that $t=\infty$ at the node, $t(O_\Gamma)=0$ and $t(\gamma_1)=1$.

In the deeply supersingular case we can take $\gamma=\gamma_1$, making the choice of the coordinate $t$ unique, and then $t(\gamma_2)=\lam\in k\setminus\FF_p$. In this case there is just one $\Gamma$-stable configuration, and $E$ consists of two components: an irreducible core $E_0$ (smooth or a $1$-gon) and the unique leaf $Z=Z_0$, see figure (\ref{types1}) below.

If $\Gamma_0$ is cyclic, then there are $p+1$ choices for $\Gamma_0\subset\Gamma$, and we assume for concreteness that $\gamma_1\in\Gamma_0$. Each leaf is a $\PP^1_k$ acted on by $\FF_p$ and there are $p$ leaves acted on transitively by $\Gamma/\Gamma_0$. Consider the next layer which consists of the components that intersect the leaves. Of course, $\Gamma$ acts transitively on this layer and there are two cases. If this layer is not in the semistable core of $E$, then it consists of the single component $Z=\PP^1_k$ and we can choose the coordinate $t'$ so that $\infty$ is the node in the semistable core $E_0$ and $\FF_p$ are the nodes where the leaves are attached. The group $\Gamma$ acts on $Z$ with the stabilizer $\Gamma_0$ and translates the finite nodes. In this case, $E_0$ is irreducible, and we say that $E$ is {\em canonical supersingular}.

Finally, it can happen that the $p$ nodes of the leaves lie in $E_0$. Since $\Gamma/\Gamma_0$ acts on this set of nodes transitively it follows that it is precisely $E_0[p](k)$ and hence either $E_0$ is ordinary elliptic or a $p$-gon and its torsion is rigidified by $\Gamma/\Gamma_0$. We call these cases {\em ordinary} and {\em Tate} accordingly. We summarize the cases as follows.

\begin{itemize}
\item[(i)] {\em Deep supersingular.} The semistable core is irreducible and there is one leaf with $\FF_p\oplus\lam\FF_p$ marked points.
\item[(ii)] {\em Ordinary.} The core $E_0$ is an ordinary elliptic curve. There are $p$ leaves with nodes at $E_0[p](k)$, and each leaf contains $\FF_p$ marked points.
\item[(iii)] {\em Canonical supersingular.} The core is irreducible and there are one intermediate component $Z$ and $p$ leaves intersecting it at the points of $\FF_p$. The marked points on the leaves are also at the points of $\FF_p$. This configuration can generalize to the deep supersingular component by smoothing the $p$ nodes between $Z$ and the leaves. If $E_0$ is supersingular, it can only generalize to an ordinary component by smoothing the node between $E_0$ and $Z$.
\item[(iv)] {\em Tate's ordinary configuration.} The core is a $p$-gon, the next layer consists of $p$ leaves with $\FF_p$ marked points. This case generalizes to the ordinary case by smoothing the nodes of the $p$-gon.
\end{itemize}

Here is an illustration for $p=3$ in terms of the dual graphs. Nodes and their smoothings under generizations correspond to edges and their contractions, which are indicated by arrows.\m

% Define universal TikZ styles
\tikzset{
    node_compressed/.style={
        circle,
        draw,
        thick,
        minimum size=0.7cm,
        inner sep=0pt,
        fill=white,
        font=\normalsize\bfseries
    },
    edge_thick/.style={
        thick
    },
    leg/.style={
        thick,
        -
    },
    type_label/.style={
        font=\Large\bfseries,
        text depth=0.5ex
    },
    contraction_arrow/.style={
        stealth-, % Reversed so the arrow points to the first coordinate
        line width=1.5pt,
        blue!70!black
    },
    contraction_text/.style={
        font=\normalsize\bfseries,
        blue!70!black,
        align=center
    },
    % Edge label style for neat placement
    edge_label/.style={
        font=\small\bfseries,
        inner sep=2pt
    }
}

\begin{equation}
% scale=0.77 shrinks the whole diagram by roughly 1.3 (since 1/1.3 ≈ 0.77)
\begin{tikzpicture}[scale=0.77, transform shape]

    % Vertical distance and horizontal leaf spread
    \pgfmathsetmacro{\yStep}{1.8}
    \pgfmathsetmacro{\xSpread}{1.1}

    % Explicit X-coordinates for each of the 4 graphs to finely tune spacing
    \pgfmathsetmacro{\xOne}{0}       % Graph 1: Deep
    \pgfmathsetmacro{\xTwo}{3.3}     % Graph 2: Canonical (3.3cm from Deep)
    \pgfmathsetmacro{\xThree}{7.6}   % Graph 3: Ordinary (4.3cm from Canonical)
    \pgfmathsetmacro{\xFour}{12.4}   % Graph 4: Tate (increased by 0.5cm)

    % =========================================================================
    % Type 1: Deep Supersingular (Root E0 -> single Leaf Z, 9 legs)
    % =========================================================================
    \begin{scope}[xshift=\xOne cm]
        \node[node_compressed] (E0_1) at (0, 2*\yStep) {$E_0$};
        \node[node_compressed] (Z_1) at (0, 1*\yStep) {$Z$};

        \draw[edge_thick] (E0_1) -- (Z_1) node[midway, left, edge_label] {$l_0$};

        % 9 legs spread around 270 deg (bottom)
        \foreach \i in {1,...,9}
        {
            \pgfmathsetmacro{\angle}{270 + (\i-5)*10}
            \draw[leg] (Z_1) -- +(\angle:0.7);
        }

        \node[type_label] at (0, -1.8) {Deep};
    \end{scope}

    % =========================================================================
    % Type 2: Canonical Supersingular (Root E0 -> Branch Z -> Leaves Z0...2)
    % =========================================================================
    \begin{scope}[xshift=\xTwo cm]
        \node[node_compressed] (E0_2) at (0, 2*\yStep) {$E_0$};
        \node[node_compressed] (Z_branch) at (0, 1*\yStep) {$Z$};

        \node[node_compressed] (Z0_2) at (-\xSpread, 0) {$Z_0$};
        \node[node_compressed] (Z1_2) at (0, 0)          {$Z_1$};
        \node[node_compressed] (Z2_2) at (\xSpread, 0)  {$Z_2$};

        % Edge l_0
        \draw[edge_thick] (E0_2) -- (Z_branch) node[midway, right, edge_label] {$l_0$};

        % Edges l_1
        \draw[edge_thick] (Z_branch) -- (Z0_2) node[midway, left, edge_label, xshift=-2pt] {$l_1$};
        \draw[edge_thick] (Z_branch) -- (Z1_2) node[pos=0.6, right, edge_label] {$l_1$};
        \draw[edge_thick] (Z_branch) -- (Z2_2) node[midway, right, edge_label, xshift=2pt] {$l_1$};

        % 3 legs per leaf (9 total)
        \foreach \i in {1,2,3}
        {
            \pgfmathsetmacro{\angle}{270 + (\i-2)*25}
            \draw[leg] (Z0_2) -- +(\angle:0.7);
            \draw[leg] (Z1_2) -- +(\angle:0.7);
            \draw[leg] (Z2_2) -- +(\angle:0.7);
        }

        \node[type_label] at (0, -1.8) {Canonical};
    \end{scope}

    % =========================================================================
    % Type 3: Ordinary (Root E_0 -> Leaves Z0...2, 3 legs per leaf)
    % =========================================================================
    \begin{scope}[xshift=\xThree cm]
        \node[node_compressed, fill=green!5] (E_3) at (0, 1*\yStep) {$E_0$};
        \node[node_compressed] (Z0_3) at (-\xSpread, 0) {$Z_0$};
        \node[node_compressed] (Z1_3) at (0, 0)          {$Z_1$};
        \node[node_compressed] (Z2_3) at (\xSpread, 0)  {$Z_2$};

        % Edges l_1
        \draw[edge_thick] (E_3) -- (Z0_3) node[midway, left, edge_label, xshift=-2pt] {$l_1$};
        \draw[edge_thick] (E_3) -- (Z1_3) node[pos=0.6, right, edge_label] {$l_1$};
        \draw[edge_thick] (E_3) -- (Z2_3) node[midway, right, edge_label, xshift=2pt] {$l_1$};

        % 3 legs per leaf (9 total)
        \foreach \i in {1,2,3}
        {
            \pgfmathsetmacro{\angle}{270 + (\i-2)*25}
            \draw[leg] (Z0_3) -- +(\angle:0.7);
            \draw[leg] (Z1_3) -- +(\angle:0.7);
            \draw[leg] (Z2_3) -- +(\angle:0.7);
        }

        \node[type_label] at (0, -1.8) {Ordinary};
    \end{scope}

    % =========================================================================
    % Type 4: Tate / Bad Reduction (Cycle of length 3)
    % =========================================================================
    \begin{scope}[xshift=\xFour cm]

        % Nested scope to handle the vertical shift cleanly
        \begin{scope}[yshift=1.05*\yStep cm]
            \pgfmathsetmacro{\rCyc}{0.85}        % Radius for the triangle vertices
            \pgfmathsetmacro{\rLf}{2.0}          % Radius for the leaves

            % Triangle Cycle (Vertices: E_0, E_1, E_2)
            \node[node_compressed] (C1) at (90:\rCyc) {$E_2$};
            \node[node_compressed] (C2) at (210:\rCyc) {$E_0$};
            \node[node_compressed] (C3) at (330:\rCyc) {$E_1$};

            % Edges of the triangle, labelled 'l'
            \draw[edge_thick] (C1) -- (C2) node[midway, above left, edge_label] {$l$};
            \draw[edge_thick] (C2) -- (C3) node[midway, below, edge_label, yshift=-2pt] {$l$};
            \draw[edge_thick] (C3) -- (C1) node[midway, above right, edge_label] {$l$};

            % Leaves (Z_0, Z_1, Z_2)
            \node[node_compressed] (L1) at (90:\rLf) {$Z_2$};
            \node[node_compressed] (L2) at (210:\rLf) {$Z_0$};
            \node[node_compressed] (L3) at (330:\rLf) {$Z_1$};

            % Connecting edges
            \draw[edge_thick] (C1) -- (L1) node[midway, right, edge_label, xshift=2pt] {$l_1$};
            \draw[edge_thick] (C2) -- (L2) node[midway, right, edge_label, xshift=-1pt, yshift=-6pt] {$l_1$};
            \draw[edge_thick] (C3) -- (L3) node[midway, right, edge_label, xshift=-2pt, yshift=6pt] {$l_1$};

            % Legs (3 per leaf, projecting outwards)
            \foreach \i in {1,2,3} {
                \pgfmathsetmacro{\angle}{90 + (\i-2)*30}
                \draw[leg] (L1) -- +(\angle:0.7);
            }
            \foreach \i in {1,2,3} {
                \pgfmathsetmacro{\angle}{210 + (\i-2)*30}
                \draw[leg] (L2) -- +(\angle:0.7);
            }
            \foreach \i in {1,2,3} {
                \pgfmathsetmacro{\angle}{330 + (\i-2)*30}
                \draw[leg] (L3) -- +(\angle:0.7);
            }
        \end{scope}

        % Label correctly aligned with the other three graphs
        \node[type_label] at (0, -1.8) {Tate};
    \end{scope}

    % =========================================================================
    % CONTRACTION ARROWS
    % =========================================================================
    % Note: The stealth- style draws the arrow tip at the FIRST coordinate.

    % Arrow from Type 2 to Type 1 (Contracting bottom edges l_1)
    % Points leftwards from Canonical to Deep
    \draw[contraction_arrow]
        (\xOne + 1.4, 0.8*\yStep) -- (\xTwo - 1.1, 0.65*\yStep)
        node[midway, above, contraction_text] {$l_1$};

    % Arrow from Type 2 to Type 3 (Contracting upper edge l_0)
    % Points rightwards from Canonical to Ordinary
    \draw[contraction_arrow]
        (\xThree - 1.3, 1.3*\yStep) -- (\xTwo + 1.3, 1.45*\yStep)
        node[midway, above, contraction_text] {$l_0$};

    % Arrow from Type 3 to Type 4 (Contracting cycle edges l)
    % Points rightwards from Ordinary to Tate
    \draw[contraction_arrow]
        (\xThree + 1.6, 1.05*\yStep) -- (\xFour - 2.3, 1.05*\yStep)
        node[midway, above, contraction_text] {$l$};
\end{tikzpicture}
\label{types1}
\end{equation}

\subsubsection{Classification when $p$ is invertible}\label{pinvsec}
When $p\neq\cha(k)$ the same combinatorial arguments also lead to a full list of $\Gamma$-marked configurations. The only differences are that the $p$-torsion is always maximal: $E[p]\toisom\Gamma$ and $\GG_m[p]=\mu_p$, the additive group has trivial torsion, so the action on rational components is through $\mu_p\subset\GG_m$ and there are no faithful $\Gamma$-actions on rational components. Comparing to the case of $\cha(k)=p$ this eliminates the supersingular configuration, adds the usual semistable configurations of elliptic curves and Tate $p$-gons, and forbids the specialization from ordinary to canonical. Here is a brief summary of configurations:

\begin{itemize}
\item[(o)] {\em Smooth.} A usual $\Gamma$-leveled elliptic curve.
\item[(i)] {\em Tate's semistable configuration.} A usual $\Gamma$-leveled Tate's $p$-gon. It can generalize to a smooth curve.
\item[(ii)] {\em Igusa/ordinary configuration.} The core is an elliptic curve. There are $p$ leaves with the nodes given by a subgroup $\Gamma_0\subset E_0[p]$, and each leaf contains $\mu_p$ marked points.
\item[(iii)] {\em Tate's ordinary configuration.} The core is a $p$-gon, the next layer consists of $p$ leaves with $\mu_p$ marked points. This case generalizes to the ordinary case by smoothing the nodes of the $p$-gon.
\item[(iv)] {\em Canonical supersingular.} The core is irreducible and there are one intermediate component $Z$ and $p$ leaves hanging at $\mu_p$. Each leaf contains $\mu_p$ marked points.
\end{itemize}

\begin{equation}
% scale=0.77 shrinks the whole diagram by roughly 1.3 (since 1/1.3 ≈ 0.77)
\begin{tikzpicture}[scale=0.77, transform shape]

    % Vertical distance and horizontal leaf spread
    \pgfmathsetmacro{\yStep}{1.8}
    \pgfmathsetmacro{\xSpread}{1.1}

    % Explicit X-coordinates for each of the 4 graphs to finely tune spacing
    \pgfmathsetmacro{\xOne}{0}       % Graph 1: Semistable
    \pgfmathsetmacro{\xTwo}{3.3}     % Graph 2: Canonical (3.3cm from Semistable)
    \pgfmathsetmacro{\xThree}{7.6}   % Graph 3: Ordinary (4.3cm from Canonical)
    \pgfmathsetmacro{\xFour}{12.4}   % Graph 4: Tate (increased by 0.5cm)

    % =========================================================================
    % Type 1: Semistable (Triangle with 3 legs/node -> single node Z, 9 legs)
    % =========================================================================
    \begin{scope}[xshift=\xOne cm]
        % Top component: Triangle Cycle (Vertices: E_0, E_1, E_2)
        \begin{scope}[yshift=1.65*\yStep cm]
            \pgfmathsetmacro{\rCyc}{0.7}
            \node[node_compressed] (C1_1) at (90:\rCyc) {$E_2$};
            \node[node_compressed] (C2_1) at (210:\rCyc) {$E_0$};
            \node[node_compressed] (C3_1) at (330:\rCyc) {$E_1$};

            % Edges of the triangle, labelled 'l'
            \draw[edge_thick] (C1_1) -- (C2_1) node[midway, above left, edge_label] {$l$};
            \draw[edge_thick] (C2_1) -- (C3_1) node[midway, below, edge_label, yshift=-2pt] {$l$};
            \draw[edge_thick] (C3_1) -- (C1_1) node[midway, above right, edge_label] {$l$};

            % 3 legs per node projecting outwards
            \foreach \i in {1,2,3} {
                \pgfmathsetmacro{\angle}{90 + (\i-2)*25}
                \draw[leg] (C1_1) -- +(\angle:0.7);
            }
            \foreach \i in {1,2,3} {
                \pgfmathsetmacro{\angle}{210 + (\i-2)*25}
                \draw[leg] (C2_1) -- +(\angle:0.7);
            }
            \foreach \i in {1,2,3} {
                \pgfmathsetmacro{\angle}{330 + (\i-2)*25}
                \draw[leg] (C3_1) -- +(\angle:0.7);
            }
        \end{scope}

        % Bottom component: Single node with 9 legs
        \node[node_compressed] (Z_1) at (0, 0) {$E$};

        % 9 legs spread around 270 deg (bottom)
        \foreach \i in {1,...,9}
        {
            \pgfmathsetmacro{\angle}{270 + (\i-5)*12}
            \draw[leg] (Z_1) -- +(\angle:0.7);
        }

        % Contraction arrow pointing from the triangle down to the single node with label 'l'
        \draw[contraction_arrow] (0, 0.35*\yStep) -- (0, 1.*\yStep) node[midway, right, edge_label] {$l$};

        \node[type_label] at (0, -1.8) {Semistable};
    \end{scope}

    % =========================================================================
    % Type 2: Canonical Supersingular (Root E0 -> Branch Z -> Leaves Z0...2)
    % =========================================================================
    \begin{scope}[xshift=\xTwo cm]
        \node[node_compressed] (E0_2) at (0, 2*\yStep) {$E_0$};
        \node[node_compressed] (Z_branch) at (0, 1*\yStep) {$Z$};

        \node[node_compressed] (Z0_2) at (-\xSpread, 0) {$Z_0$};
        \node[node_compressed] (Z1_2) at (0, 0)          {$Z_1$};
        \node[node_compressed] (Z2_2) at (\xSpread, 0)  {$Z_2$};

        % Edge l_0
        \draw[edge_thick] (E0_2) -- (Z_branch) node[midway, right, edge_label] {$l_0$};

        % Edges l_1
        \draw[edge_thick] (Z_branch) -- (Z0_2) node[midway, left, edge_label, xshift=-2pt] {$l_1$};
        \draw[edge_thick] (Z_branch) -- (Z1_2) node[pos=0.6, right, edge_label] {$l_1$};
        \draw[edge_thick] (Z_branch) -- (Z2_2) node[midway, right, edge_label, xshift=2pt] {$l_1$};

        % 3 legs per leaf (9 total)
        \foreach \i in {1,2,3}
        {
            \pgfmathsetmacro{\angle}{270 + (\i-2)*25}
            \draw[leg] (Z0_2) -- +(\angle:0.7);
            \draw[leg] (Z1_2) -- +(\angle:0.7);
            \draw[leg] (Z2_2) -- +(\angle:0.7);
        }

        \node[type_label] at (0, -1.8) {Canonical};
    \end{scope}

    % =========================================================================
    % Type 3: Ordinary (Root E_0 -> Leaves Z0...2, 3 legs per leaf)
    % =========================================================================
    \begin{scope}[xshift=\xThree cm]
        \node[node_compressed] (E_3) at (0, 1*\yStep) {$E_0$};
        \node[node_compressed] (Z0_3) at (-\xSpread, 0) {$Z_0$};
        \node[node_compressed] (Z1_3) at (0, 0)          {$Z_1$};
        \node[node_compressed] (Z2_3) at (\xSpread, 0)  {$Z_2$};

        % Edges l_1
        \draw[edge_thick] (E_3) -- (Z0_3) node[midway, left, edge_label, xshift=-2pt] {$l_1$};
        \draw[edge_thick] (E_3) -- (Z1_3) node[pos=0.6, right, edge_label] {$l_1$};
        \draw[edge_thick] (E_3) -- (Z2_3) node[midway, right, edge_label, xshift=2pt] {$l_1$};

        % 3 legs per leaf (9 total)
        \foreach \i in {1,2,3}
        {
            \pgfmathsetmacro{\angle}{270 + (\i-2)*25}
            \draw[leg] (Z0_3) -- +(\angle:0.7);
            \draw[leg] (Z1_3) -- +(\angle:0.7);
            \draw[leg] (Z2_3) -- +(\angle:0.7);
        }

        \node[type_label] at (0, -1.8) {Igusa};
    \end{scope}

    % =========================================================================
    % Type 4: Tate / Bad Reduction (Cycle of length 3)
    % =========================================================================
    \begin{scope}[xshift=\xFour cm]

        % Nested scope to handle the vertical shift cleanly
        \begin{scope}[yshift=1.05*\yStep cm]
            \pgfmathsetmacro{\rCyc}{0.85}        % Radius for the triangle vertices
            \pgfmathsetmacro{\rLf}{2.0}          % Radius for the leaves

            % Triangle Cycle (Vertices: E_0, E_1, E_2)
            \node[node_compressed] (C1) at (90:\rCyc) {$E_2$};
            \node[node_compressed] (C2) at (210:\rCyc) {$E_0$};
            \node[node_compressed] (C3) at (330:\rCyc) {$E_1$};

            % Edges of the triangle, labelled 'l'
            \draw[edge_thick] (C1) -- (C2) node[midway, above left, edge_label] {$l$};
            \draw[edge_thick] (C2) -- (C3) node[midway, below, edge_label, yshift=-2pt] {$l$};
            \draw[edge_thick] (C3) -- (C1) node[midway, above right, edge_label] {$l$};

            % Leaves (Z_0, Z_1, Z_2)
            \node[node_compressed] (L1) at (90:\rLf) {$Z_2$};
            \node[node_compressed] (L2) at (210:\rLf) {$Z_0$};
            \node[node_compressed] (L3) at (330:\rLf) {$Z_1$};

            % Connecting edges
            \draw[edge_thick] (C1) -- (L1) node[midway, right, edge_label, xshift=2pt] {$l_1$};
            \draw[edge_thick] (C2) -- (L2) node[midway, right, edge_label, xshift=-1pt, yshift=-6pt] {$l_1$};
            \draw[edge_thick] (C3) -- (L3) node[midway, right, edge_label, xshift=-2pt, yshift=6pt] {$l_1$};

            % Legs (3 per leaf, projecting outwards)
            \foreach \i in {1,2,3} {
                \pgfmathsetmacro{\angle}{90 + (\i-2)*30}
                \draw[leg] (L1) -- +(\angle:0.7);
            }
            \foreach \i in {1,2,3} {
                \pgfmathsetmacro{\angle}{210 + (\i-2)*30}
                \draw[leg] (L2) -- +(\angle:0.7);
            }
            \foreach \i in {1,2,3} {
                \pgfmathsetmacro{\angle}{330 + (\i-2)*30}
                \draw[leg] (L3) -- +(\angle:0.7);
            }
        \end{scope}

        % Label correctly aligned with the other three graphs
        \node[type_label] at (0, -1.8) {Tate};
    \end{scope}

    % =========================================================================
    % CONTRACTION ARROWS
    % =========================================================================
    % Note: The stealth- style draws the arrow tip at the FIRST coordinate.

    % Arrow from Type 3 to Type 4 (Contracting cycle edges l)
    % Points rightwards from Ordinary to Tate
    \draw[contraction_arrow]
        (\xThree + 1.6, 1.05*\yStep) -- (\xFour - 2.3, 1.05*\yStep)
        node[midway, above, contraction_text] {$l$};

\end{tikzpicture}
\label{types2}
\end{equation}

\subsubsection{$\Gamma$-smoothability}
Using the above classification we can also describe the $\Gamma$-smoothability condition. When $p$ is invertible the situation is classical and was described in \cite{DR} -- the only smoothing is from the semistable Tate configuration to an elliptic curve, and the other components of $(\ocM_{1,\Gamma})^\Gamma$ are non-smoothable.

\begin{lem}\label{Gammasmoothlem}
Let $(E,\Gamma_k)$ be a $\Gamma$-marked genus-1 curve over an algebraically closed field of characteristic $p$. Then $(E,\Gamma_k)$ is always $\Gamma$-smoothable in the ordinary and Tate cases, and it is smoothable in the supersingular cases if and only if the core $E_0$ is supersingular.
\end{lem}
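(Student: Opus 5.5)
The plan is to read off the closed fiber from a smoothing via stable reduction of $p$-torsion for the necessity direction, and to produce smoothings by lifting for the existence direction. By Lemma~\ref{tautology} and a standard reduction, smoothability of a geometric point is equivalent to the existence of a DVR $R$ (after a finite extension), with fraction field $K$ of characteristic $0$ and residue field $k$, together with a $\Gamma$-marked $R$-curve whose generic fiber is $(E_K,E_K[p])$ and whose closed fiber is $(E,\Gamma_k)$. The generic fiber must be in characteristic $0$ because $Y(p)$ lives over $\ZZ[1/p]$. Contracting all leaves and intermediate components of such a family, i.e. retracting onto the $1$-pointed stable model, gives the stable reduction of $E_K$. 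So the core $E_0$ is the reduction of $E_K$: an elliptic curve with good reduction, or a polygon in the multiplicative case.

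\textbf{Necessity.} We look at how $E_K[p]$ specializes to $E_0$. The marked points that collide at a point of $E_0$ form a coset of the kernel $H$ of reduction on $E_K[p](\bar K)$, and each such coset sprouts a tree of components. There are three cases.
\begin{itemize}
\item[(a)] If the reduction is ordinary, $H$ has order $p$ and the $p$ cosets land at the $p$ distinct points of $E_0[p](k)$. This gives exactly the ordinary configuration.
\item[(b)] If the reduction is multiplicative, after a base change the $p$-gon configuration arises and we are in the Tate case.
\item[(c)] If the reduction is supersingular, $H=E_K[p]$ and all $p^2$ points collide at the origin. This yields one of the supersingular configurations.
\end{itemize}
Conversely, a supersingular configuration has all of $\Gamma$ hanging over a single point of the core, so $|H|=p^2$. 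This forces the reduction, hence $E_0$, to be supersingular: in the ordinary and multiplicative cases $|H|=p$. This proves the ``only if'' part.

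\textbf{Existence in the ordinary and Tate cases.} Lift $E_0$ (resp. the Tate $p$-gon, via a Tate curve $\GG_m/q^\ZZ$) to $W(k)$, and adjoin $E_K[p]$ by a finite extension. The stable model of $(E_K,E_K[p])$ then has an ordinary (resp. Tate) configuration with core $E_0$. The remaining data of the configuration form a finite set: the line $\Gamma_0$, the bijection $\Gamma/\Gamma_0\to E_0[p](k)$, and the normalized coordinates on the leaves in which the marked points are $\FF_p$. The leaf data are rigid by \S\ref{classec}. We then check that precomposing the level structure with elements of $\GL_2(\FF_p)$ acts transitively on these labellings, so every ordinary or Tate configuration is realized.

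\textbf{Existence in the supersingular cases; main obstacle.} Here we must hit every $\lambda\in k\setminus\FF_p$ in the deep case. For a lift $E_K$ with formal group $\hat E$, the $p^2$ points of $\hat E[p]$ lie on a common circle. Rescaling the parameter by an element of that valuation, their reductions form an $\FF_p$-vector space $\FF_p\oplus\lambda\FF_p\subset k$, and this is the deep leaf. Varying the lift over the one-parameter universal deformation $W(k)[[t]]$ should make $\lambda$ sweep all of $k\setminus\FF_p$. I would try to see this most cheaply via the Katz--Mazur model: the point is regular and $2$-dimensional at a supersingular point, and the deep configurations near it are parameterized by tangent directions, i.e. by the exceptional $\PP^1$ of the blowup, with $\lambda$ the slope. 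Verifying surjectivity onto $k\setminus\FF_p$ is the hard computation.

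Once the deep configurations are known to be smoothable, the canonical supersingular ones follow by closedness of $\cX(p)$. Over a fixed supersingular $E_0$, let $\lambda$ tend to a point of $\FF_p\cup\{\infty\}$ in the family of deep configurations. The stable limit in $\ocM_{1,\Gamma}$ is a canonical supersingular configuration, with $\Gamma_0$ determined by the limiting line. Such a limit is smoothable because $\cX(p)$ is closed. Using the $\GL_2(\FF_p)$ symmetry again, all $p+1$ choices of $\Gamma_0$ and all labellings occur.
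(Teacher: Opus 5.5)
Your necessity argument is essentially the paper's: the fibres of $\Gamma\to E_0$ are cosets of the kernel of reduction on $E_K[p]$, and this kernel has order $p^2$ only for supersingular good reduction. Your ordinary/Tate existence argument is a more careful version of the paper's, with the $\GL_2(\FF_p)$-transitivity made explicit.

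The gap is the deep supersingular case. It is the only non-formal part of the existence direction, and your canonical case also rests on it. You say that varying the lift ``should'' make $\lambda$ sweep $k\setminus\FF_p$, and you leave exactly this as ``the hard computation''. So nothing in the proposal shows that even one deep configuration is smoothable. In addition, the claim that $\hat E[p]$ lies on a single circle for every lift is false. If the lift of the Hasse invariant has valuation $<\frac{p}{p+1}v(p)$, the lift has a canonical subgroup whose points are strictly closer to $0$ (compare Lemma~\ref{ramlem}), and the reduction is canonical, not deep.

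The missing idea is that no surjectivity computation is needed. For a fixed supersingular core, the supersingular configurations form an irreducible curve $\PP^1_\lambda$: deep off $\PP^1(\FF_p)$, canonical at $\PP^1(\FF_p)$. By Lemma~\ref{tautology}, the smoothable ones form the closed subset cut out by $\cX(p)$; this is the same closedness you already invoke for the canonical limits. Hence it suffices to find infinitely many smoothable $\lambda$, or one smoothable configuration with $\lambda$ transcendental over $\FF_p$. This settles the deep and canonical cases at once (the paper instead handles the canonical case directly, using lifts with a canonical subgroup).

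The paper gets such a configuration from exactly the blowup you mention, without ever identifying $\lambda$ with a slope. Let $x$ be the generic point of the exceptional divisor $D$ of the blowup of $\tilcX(p)$ at the supersingular point. Its local ring $\cO_x$ is a mixed characteristic DVR whose fraction field is the function field of $X(p)$, so it provides a smoothing. Moreover, $\lambda$ cannot be constant on $D$: $D$ meets the $p+1$ Igusa curves, which carry different canonical subgroups and hence different limiting values of $\lambda$ in $\PP^1(\FF_p)$. Thus $\lambda_x$ is transcendental.

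Alternatively, your direct route can be completed. Restrict to lifts whose Hasse lift $a$ has $v(a)\ge\frac{p}{p+1}v(p)$. Rescaling $t=cs$ with $v(c)=v(p)/(p^2-1)$ turns the reduction of $[p]$ into a unit multiple of $s^{p^2}+\bar\alpha s^p+\bar\beta s$. Here $\bar\beta\neq 0$, and $\bar\alpha\in k$ can be made arbitrary, since mod $p$ the Hasse invariant is a parameter of the universal deformation. Finally, every two-dimensional $\FF_p$-subspace of $k$ is, after scaling, the root set of such a polynomial.
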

\begin{proof}
In this case the $\Gamma$-smoothability condition is equivalent to existence of a valuation ring $R$ with $k=R/m_R$ and a $\Gamma$-marked $R$-curve $(E_R,\Gamma_R)$ whose closed fiber is $(E,\Gamma_k)$ and generic fiber $(E_\eta,\Gamma_\eta)$ is $\Gamma$-leveled elliptic. If such an $E_R$ exists, then $\Gamma_\eta=E_\eta[p]$ specializes to $E_0[p]$, and hence the image of $\Gamma_k$ under the retraction onto the core is $E_0[p](k)$. This implies that the conditions are necessary for $\Gamma$-smoothability.

Conversely, we should construct a lift for each type as in the lemma. In the ordinary, Tate and canonical supersingular cases everything is controlled by combinatorics and the isomorphism type of the core, so we can take any ordinary good reduction $E_\eta$ which reduces to $E_0$, any bad reduction $E_\eta$ or any $E_\eta$ with supersingular reduction and canonical subgroup, and this already provides a $\Gamma$-smoothing.

In the deep supersingular case there is a one-dimensional component with coordinate $\lam$, where $\Gamma=\FF_p\oplus\lam\FF_p$ in the leaf. We should prove that the whole component is $\Gamma$-smoothable. So, it suffices to prove that the set of $\Gamma$-smoothable values of $\lam$ is infinite. One way to check this is as follows. Consider the Katz-Mazur model $\tilcX=\tilcX(p)$ and let $\tilx$ be the supersingular point corresponding to $j(E_0)$. Blowing up $\tilx$ produces a divisor $D$ with generic point $x$, and $R=\cO_x$ is a mixed characteristic DVR. Since $D$ intersects $p+1$ Igusa components separated by this blowup, $D$ contains supersingular points with different canonical subgroups. It follows that $(E_x,\Gamma_x)$ has deep supersingular reduction and the corresponding value of $\lam_x\in k(x)$ is not constant on $D$, and hence is transcendental over $\FF_q$.
\end{proof}

Recall that in the Katz-Mazur model $\tilcX(p)$ the ordinary locus consists of $p+1$ Igusa curves intersecting transversally at each supersingular point $z_j$, and the set of supersingular points is the vanishing locus $V(S_p)$ of an appropriate separable polynomial $S_p(j)$ of approximate degree $\frac{p}{12}$ (the precise weighted counting with automorphisms yields $\frac{p-1}{24}$). Hence blowing up this locus inserts new components $Z_j=\PP^1_{k(x_j)}$ and separates Igusa curves. Note that $\GL_2(\FF_p)$ acts on $X(p)$ through automorphisms of $\Gamma$ (and it is the Galois group of $X(p)$ over $\PP^1_j\otimes\ZZ[1/p]$), hence it acts transitively on the set of Igusa curves and for an appropriate coordinate $\lam_j$ on $Z_j$ the intersection with the Igusa curves occurs at $\FF_p$-points. Combining this with the previous lemma we obtain the following comparison result.

\begin{cor}\label{comparecor}
Let $\Bl_{S_p}(\tilcX(p))$ denote the blowing up of the Katz-Mazur model $\tilcX(p)$ at the set of supersingular points. Then the sets of geometric points of $\cX(p)$ and $\Bl_{S_p}(\tilcX(p))$ are naturally bijective.
\end{cor}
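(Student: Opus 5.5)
The plan is to compare the two sets fibrewise over $\Spec(\ZZ)$ and, in characteristic $p$, stratum by stratum. Over $\ZZ[1/p]$ both spaces coincide with $X(p)$. For $\cX(p)$ this is Lemma~\ref{Nlem}. For $\Bl_{S_p}(\tilcX(p))$ it holds because the blown-up points lie in the $\FF_p$-fibre and $\tilcX(p)\otimes\ZZ[1/p]=X(p)$. So only geometric points of characteristic $p$ need to be matched.

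By Lemma~\ref{tautology}, the geometric points of $\cX(p)$ in characteristic $p$ are exactly the $\Gamma$-smoothable $\Gamma$-marked curves. The classification of \S\ref{classec} together with Lemma~\ref{Gammasmoothlem} describes them. (Picard invariance is automatic for smoothable curves, by (1)(b) of \S\ref{redsec}.) They fall into four families.

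\textbf{(a) Ordinary and Tate points.} These have core $E_0$ and a choice of cyclic $\Gamma_0\subset\Gamma$ (one of $p+1$ choices) with a rigidification $\Gamma/\Gamma_0\toisom E_0[p](k)$.

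\textbf{(b) Canonical supersingular points.} These have supersingular core $E_0$ and a choice of $\Gamma_0$.

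\textbf{(c) Deep supersingular points.} These have supersingular core $E_0$ and a parameter $\lam\in k\setminus\FF_p$, defined as $t(\gamma_2)$ for the normalized coordinate $t$ on the leaf.

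\textbf{(d) Automorphisms.} Isomorphism classes have to be taken modulo automorphisms of $E_0$.

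On the other side, outside the supersingular points $\tilcX(p)_{\FF_p}$ is the disjoint union of the $p+1$ Igusa curves, indexed by $\Gamma_0=\ker$ of the Drinfeld structure. An ordinary or cuspidal point of the Igusa curve attached to $\Gamma_0$ is exactly an ordinary $E_0$ (or a Tate $p$-gon) with $\Gamma/\Gamma_0\toisom E_0[p](k)$, i.e. a point of type (a). The exceptional divisor over a supersingular point $z_j$ is $Z_j=\PP^1$ with coordinate $\lam_j$. It meets the $p+1$ Igusa curves at the $\FF_p$-rational points, $\infty$ included.

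The natural map is then as follows. An ordinary or Tate point of $\cX(p)$ goes to the corresponding point of the Igusa curve of $\Gamma_0$. A canonical supersingular point with subgroup $\Gamma_0$ and core $E_0$ goes to the intersection of $Z_j$, for $j=j(E_0)$, with the strict transform of the Igusa curve of $\Gamma_0$. A deep supersingular point with parameter $\lam$ goes to the point $\lam_j=\lam$ of $Z_j$. To make this canonical rather than ad hoc, I would define the map via the closures. Take a geometric point of $\cX(p)$, choose a smoothing over a valuation ring $R$ as in the proof of Lemma~\ref{Gammasmoothlem}, and let $\tilcX(p)$ receive the generic point. The limit in the blowup is then determined by the limit of $\Gamma_\eta$ on the core (the Drinfeld structure, giving a point of $\tilcX(p)$) together with the relative position of the points of $\Gamma_\eta$ at the next scale (giving the point on $Z_j$). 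The point on $Z_j$ is read off as the ratio $t(\gamma_2)/t(\gamma_1)$ after rescaling the formal group coordinate so that $\gamma_1\mapsto1$.

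The hardest step is identifying the coordinate $\lam$ from the classification with the coordinate $\lam_j$ on the exceptional divisor, up to the right automorphism group. The proof of Lemma~\ref{Gammasmoothlem} already shows that $\lam$ is a nonconstant rational function on $Z_j$. I would then show that this function has degree one. The plan is to compute in the completed local ring of $\tilcX(p)$ at $z_j$, which by Katz--Mazur is $W(k)[[x,y]]/(\text{product of the }p+1\text{ Drinfeld equations})$, where $x,y$ are formal parameters of the images of $\gamma_1,\gamma_2$ in the formal group. The blowup chart coordinate is $y/x$. The leaf coordinate of the limit stable curve is obtained by rescaling the formal parameter by $x$, so $\lam=\lim y/x=\lam_j$. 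Under this identification the $\FF_p$-points ($\infty$ included) correspond exactly to collisions that force an intermediate component, i.e. to the canonical case. Finally, quotienting both sides by $\Aut(E_0)$, which acts compatibly on $x,y$, gives the bijection on geometric points over each $z_j$. Combined with the identification on the Igusa strata, this proves Corollary~\ref{comparecor}.
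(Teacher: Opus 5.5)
Your argument is correct in substance. It uses the same stratification as the paper: Igusa/Tate points, and over each supersingular $j$ the $p+1$ canonical points plus a $\PP^1\setminus\PP^1(\FF_p)$ of deep points. Where it differs is in how the supersingular part is identified.

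\textbf{How the two routes differ.} The paper only matches shapes. Blowing up $z_j$ inserts $Z_j$, which separates the $p+1$ Igusa curves. The transitive action of $\GL_2(\FF_p)=\Aut(\Gamma)$ lets one choose a coordinate $\lam_j$ so that the Igusa curves cross $Z_j$ at $\PP^1(\FF_p)$. Lemma~\ref{Gammasmoothlem} gives the same picture on the $\cX(p)$ side. That $\lam$ really is the exceptional coordinate, i.e.\ that the bijection is induced by contraction onto the core, is obtained only later, in Theorem~\ref{mainth}(iii), from regularity and the factorization of birational maps of regular surfaces.

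You instead define the map pointwise, through smoothings and properness of the blowup. You then compute $\lam=\lim T(\gamma_2)/T(\gamma_1)$ via the rescaled formal parameter on the leaf, as in the proof of Lemma~\ref{ramlem}. This gives a genuinely natural map already at this stage, in effect a set-theoretic version of Theorem~\ref{mainth}(iii). It also explains the $\PP^1(\FF_p)$ pattern directly, with no symmetry argument:
\begin{itemize}
\item near $z_j$ the Igusa curve of $\Gamma_0=\langle\gamma_2-a\gamma_1\rangle$ is $y-_F[a](x)=0$, and the Igusa curve of $\langle\gamma_1\rangle$ is $x=0$, so they meet $Z_j$ at $\lam_j=a$ and $\lam_j=\infty$ respectively;
\item on the $\cX(p)$ side, canonicity of $\Gamma_0$ means exactly $|y-_F[a](x)|<|x|$, respectively $|x|<|y|$, in any smoothing, which forces $y/x\to a$, respectively $\infty$.
\end{itemize}
Stating this explicitly would make your sentence about ``collisions that force an intermediate component'' precise. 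It also gives the independence of the smoothing that your construction needs. The price of your route is reliance on the Katz--Mazur local structure at supersingular points.

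\textbf{A slip to correct.} The completed local ring of $\tilcX(p)$ at $z_j$ is not $W(k)[[x,y]]$ modulo a product of $p+1$ Igusa-type equations. That ring would have maximal ideal $(p,x,y)$, and its exceptional divisor would not be $\mathrm{Proj}\,k[x,y]$. What Katz--Mazur give is that the ring is regular of dimension two with regular parameters $x=T(\gamma_1)$ and $y=T(\gamma_2)$. The relation is $p=u\prod_{0\neq\gamma\in\Gamma}T(\gamma)$ with $u$ a unit, the same identity used in the proof of Lemma~\ref{ramlem}. This is what your claim that the chart coordinate is $y/x$ actually uses.

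Likewise, say explicitly why the quotient by $\Aut(E_0)$ is harmless. Automorphisms act on $(x,y)/(x,y)^2$ by a scalar, hence trivially on $Z_j$. On the $\cX(p)$ side they are the identity on the leaf, so they do not change $\lam$.
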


\subsubsection{Ramified $\Gamma$-smoothings}
We have already used that smoothability of a $\Gamma$-marked $k$-curve is equivalent to existence of a smoothing $(E_R,\Gamma_R)$ over a mixed characteristic valuation ring $R$. We say that such a smoothing is {\em ramified} at a node $P\in E$ if the modulus $\pi_P\in R$ of $P$ in $E_R$ strictly divides $p$ (i.e. $(\pi_R)\subsetneq(p)$). In fact, if either $p>2$ or the configuration of $E_k$ is supersingular, then any smoothing of $E_k$ is ramified at any node. Furthermore, we have the following very concrete rule the smoothing parameters of the nodes obey.

\begin{lem}\label{ramlem}
Assume that $R$ is a mixed characteristic DVR of residual characteristic $p$ and $(E_R,\Gamma_R)$ a generically smooth $\Gamma$-marked $R$-curve. In particular, $E_k$ is a $\Gamma$-marked curve of one of four types from \S\ref{classec} diagram (\ref{types1}). Let $\pi_i\in R$ be the modulus of the nodes of type $l_i$ in the diagram, where we set $\pi_i=1$ if a node of type $l_i$ does not exist in $E_k$. Then $(p)=(\pi_0^{p^2-1}\pi_1^{p-1})$.
\end{lem}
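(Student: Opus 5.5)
We will compute $v(p)$ with the normalized valuation $v$ of $R$, working over $\widehat{R^{sh}}$ (this changes nothing). The idea is to read off the moduli of the nodes from the positions of the marked points $\Gamma_\eta=E_\eta[p]$ on the generic fiber, measured in formal or annular coordinates near the leaves. The basic input is the following. Let $Z$ be a component of $E_k$, with $\infty$ at the node toward the core. The points of $\Gamma_\eta$ specializing to $Z$ away from $\infty$ lie in an open disc $D_Z$ of the generic fiber. This is the tube of the complement of $\infty$ in the contraction of everything below $Z$. For nested components $Z'\subset D_Z$ attached at a node of modulus $\pi$, the disc $D_{Z'}$ sits inside $D_Z$ with relative radius $|\pi|$. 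This is the standard description of the formal neighbourhood $R[[x,y]]/(xy-\pi)$ of a node. So it suffices to compute the radii of the discs cut out by $E_\eta[p]$ near $0$ in a formal group parameter, and to compare them with the combinatorial types of \S\ref{classec}.

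Step 1: work in the formal group $\widehat E$ of $E_R$ near the origin, when $E_0$ has good reduction. Let $T$ be a parameter of $\widehat E$ and let $[p](T)=pT+\dots+uT^{p^h}+\dots$ be multiplication by $p$, with height $h=1$ or $2$. The Newton polygon of $[p](T)/T$ gives the valuations of the nonzero points of $\widehat E[p]$:
\begin{itemize}
\item in the ordinary case there are $p-1$ points of valuation $v(p)/(p-1)$;
\item in the supersingular case there are either $p^2-1$ points of common valuation $v(p)/(p^2-1)$ (one slope), or $p-1$ points of valuation $r_1$ and $p^2-p$ points of valuation $r_2<r_1$ (two slopes), with $(p-1)r_1+(p^2-p)r_2=v(p)$.
\end{itemize}
Step 2: match the slopes with the configurations. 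Call $r_{\min}$ the smallest of these valuations.
\begin{itemize}
\item The points of valuation $r_{\min}$ are separated from $0$ already on the first component below the core. Hence the node between the core and the top component has modulus $\pi$ with $v(\pi)=r_{\min}$, because the disc $|T|<1$ rescaled to radius $|T|=|\pi|$ is exactly the leaf or intermediate component.
\item In the deep case there is a single slope, the leaf is $Z$, and $v(\pi_0)=v(p)/(p^2-1)$. This gives $(p)=(\pi_0^{p^2-1})$ with $\pi_1=1$.
\item In the canonical case $v(\pi_0)=r_2$. The canonical subgroup points sit at valuation $r_1$, so they are separated from $0$ on the leaves, which are attached to $Z$ at nodes of modulus $\pi_1$ with $v(\pi_1)=r_1-r_2$. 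Then $(p^2-1)r_2+(p-1)(r_1-r_2)=(p-1)r_1+(p^2-p)r_2=v(p)$, as required.
\item In the ordinary case $\pi_0=1$ and $v(\pi_1)=v(p)/(p-1)$.
\end{itemize}
One must also check that the nodes in a single $\Gamma$-orbit have equal moduli. This holds by $\Gamma$-equivariance.

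Step 3: the Tate case. Here $E_0$ is a $p$-gon and we use Tate uniformization $\GG_m/q^{\ZZ}$. The leaves sit over the identity component, in the parameter $T=x-1$ of $\widehat{\GG}_m$. The points of $\mu_p-1$ have valuation $v(p)/(p-1)$, which again gives $v(\pi_1)=v(p)/(p-1)$. The moduli of the $p$-gon edges involve $q$, not $p$, and they are not part of the formula.

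The main obstacle is making Step 2 rigorous. We must identify precisely the annuli of the generic fiber with the smoothed nodes of the stable model, and check that the stable model produced by marking $E_\eta[p]$ has exactly this dual graph with these edge lengths. I would argue this via the skeleton, or tropicalization, of the pointed curve: the edge lengths of the stable model equal the differences of the radii at which the marked points branch apart, and the branching radii are the Newton polygon slopes from Step 1.
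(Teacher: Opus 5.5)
Your proposal is correct and follows essentially the paper's argument. Both pass to the formal group of the core and identify the node moduli with the radii $|t(P)|$ of the $p$-torsion points, using the standard annulus description of a node; the paper also takes this step for granted. Both then conclude from the fact that the nonzero roots of $[p](t)/t$ have product $p$ up to a unit: your sum of Newton-polygon slopes is the same identity as the paper's Weierstrass factorization $f_p(t)=u(t)\prod_P(t-t(P))$ evaluated at $t=0$.

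The only difference is direction. The paper starts from the given configuration and reads off $|t(P)|\in\{|\pi_0|,|\pi_0\pi_1|\}$. That spares it your case analysis of Newton-polygon shapes and their matching with the configurations, including the unstated point that breaks can only occur at $p$ because discs around $0$ meet the torsion in subgroups.
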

\begin{proof}
Consider the formal group $\hatcE$ of $\cE=E_R$. One can view it either as a formal disc $\Spf(R\llbracket t\rrbracket)$ with a group law or as its generic fiber, which is an open unit non-archimedean disc with a coordinate $t$ and an appropriate group law. Multiplication by $p$ in the group law is given by a series $f_p(t)=pt+\dots$, which acts by $p$ on the tangent space and hence equals $pt$ modulo $t^2$. Also, the group is of height $h=1$ in the ordinary and Tate cases and $h=2$ in the supersingular case, and hence $f_p(t)$ is $t^{p^h}$ modulo $p$. Extending $R$ if necessary, we can assume that all $p^h$ points of $\hatcE[p]$ are in $R$ and hence $f_p(t)=u(t)\prod_{P\in\hatcE[p](R)}(t-t(P))$, where $u$ equals 1 modulo $p$ and we use Weierstrass division to factor it out.

Consider for concreteness the case of a canonical supersingular configuration, when both $\pi_0$ and $\pi_1$ are not invertible. The other cases are dealt with similarly, and can be formally obtained as limits when the appropriate $\pi_i$ tends to 1. Set $\pi=\pi_0\pi_1$ and let $Z_0$ be the leaf containing 0 and $Z$ the rational component above it, see diagram (\ref{types1}). The natural coordinates on $Z$ and $Z_0$ are $t/\pi_0$ and $t/\pi$, respectively (and the local rings at the generic points are the valuation rings corresponding to the Gauss valuations for the coordinates $t/\pi_0$ and $t/\pi$). In particular, each of $p-1$ non-zero torsion points $P\in\cE[p](R)$ in the canonical subgroup satisfies $|t(P)|=|\pi|$ and each point $Q\in\cE[p](R)$ not lying in the canonical subgroup satisfies $|t(Q)|=|\pi_0|$ with respect to the $p$-adic valuation of $R$. Since $p=\prod_{P\in\cE[p](R)\setminus\{0\}}t(P)$ this implies that $|p|=|\pi_0^{p^2-1}\pi_1^{p-1}|$, as claimed.
\end{proof}

\subsubsection{The geometric locus}
For the sake of illustration let us expand a bit about the whole structure and draw a picture of (the underlying set of) $(\ocM_{1,\Gamma}\otimes\FF_p)^\Gamma$. The {\em supersingular locus} is just $\PP^1_j\times\PP^1_\lam$ with the deep locus given by $\lam\notin\PP^1(\FF_p)$ and $p+1$ components of the complement where the locus is canonical. As we saw, the $\Gamma$-smoothable locus in $\PP^1_j\times\PP^1_\lam$ corresponds to the supersingular values of $j$, that is, the roots of the separable polynomial $S_p(j)$ we mentioned earlier. So, it consists of approximately $\frac{p}{12}$ components $\PP^1_\lam$.

The {\em ordinary locus} consists of $p+1$ copies of the usual Igusa curve $\Ig_p$ parameterizing generalized elliptic curves over $\FF_p$ with a trivialization $E[p](k)\toisom\ZZ/p\ZZ$. The missing points of its projective compactification are precisely the $\Gamma$-smoothable points of the canonical supersingular locus.

Here is an illustration for $p=3$. The cylinder is the supersingular locus, its $\Gamma$-smoothable part is marked by blue and obtained when $S_p(j)=0$, it contains the canonical $\Gamma$-smoothable part marked by black points, and the canonical locus also lies on the Igusa curves, forming the red ordinary locus, which contains the green Tate locus with $j=\infty$. The smoothable locus is colored. The Igusa curves are of degree $p-1$ over $\PP^1_j$ -- \'etale over the ordinary locus and totally ramified over the supersingular values of $j$. Finally, the equation $S_p(j)=0$ has only one solution for $p\le 7$, but we draw two such sections to illustrate the fact that there are in general several supersingular curves.

\begin{equation}
\begin{tikzpicture}[scale=1]

    % --- j-AXIS ---
    \draw[->, dashed, gray!80] (0, -2.8) -- (0, 2.8) node[right, black] {$j$};

    % --- CYLINDER BACK ELEMENTS ---
    % Bottom ellipse of the cylinder (back half)
    \draw[gray, thick, dashed] (4, -2) arc (0:180:4cm and 0.5cm);

    % Middle ellipse for j=\infty (back half)
%    \draw[green!70!black, thick, dashed] (4, 0) arc (0:180:4cm and 0.5cm);
    \draw[gray, thick, dashed] (4, 0) arc (0:180:4cm and 0.5cm);

    % --- SUPERSINGULAR OVALS (BACK HALVES) ---
    % Back dashed arc for top oval (j = 0.8)
    \draw[blue, thick, dashed] (4, 0.8) arc (0:180:4cm and 0.5cm);
    % Back dashed arc for bottom oval (j = -0.8)
    \draw[blue, thick, dashed] (4, -0.8) arc (0:180:4cm and 0.5cm);

    % --- IGUSA CURVES (BACK) ---
    % The curves are now closed ovals in vertical planes for each angle.
    % Parametric radius R(s) = 4 + 1.2*cos(s), height z(s) = 0.8*sin(s)
    % Angles 18 and 108 are in the back.

    % Curve: angle = 108 (Back Left)
    \draw[thick, red, domain=0:360, variable=\s, smooth, samples=60]
        plot ({ (4 + 1.2*cos(\s)) * cos(108) },
              { 0.125 * (4 + 1.2*cos(\s)) * sin(108) + 0.8*sin(\s) });

    % Curve: angle = 18 (Back Right)
    \draw[thick, red, domain=0:360, variable=\s, smooth, samples=60]
        plot ({ (4 + 1.2*cos(\s)) * cos(18) },
              { 0.125 * (4 + 1.2*cos(\s)) * sin(18) + 0.8*sin(\s) });

    % --- SUPERSINGULAR OVALS (FRONT HALVES) ---
    % Front solid arc for top oval
    \draw[blue, thick] (-4, 0.8) arc (180:360:4cm and 0.5cm);
    \node[blue] at (-2.9, 0.8) {\large $S_p(j)=0$};

    % Front solid arc for bottom oval
    \draw[blue, thick] (-4, -0.8) arc (180:360:4cm and 0.5cm);
    \node[blue] at (-2.2, -0.8) {\large $S_p(j)=0$};

    % --- CYLINDER FRONT & TOP ---
    % Top ellipse of the cylinder (z = 2)
    \draw[gray, thick] (0, 2) ellipse (4cm and 0.5cm);

    % Middle ellipse for j=\infty (front half)
    \draw[gray, thick] (-4, 0) arc (180:360:4cm and 0.5cm);
    \node[green!70!black] at (0.1, 0) {\large $j=\infty$};

    % Bottom ellipse of the cylinder (front half)
    \draw[gray, thick] (-4, -2) arc (180:360:4cm and 0.5cm);

    % Vertical bounding lines of the cylinder (silhouette edges)
    \draw[gray, thick] (-4, -2) -- (-4, 2);
    \draw[gray, thick] (4, -2) -- (4, 2);

    \draw[gray, thick] (-3.8, -2.15) -- (-3.8, 1.85);
    \draw[gray, dashed, thick] (3.8, -1.85) -- (3.8, 2.17);
    \draw[gray, thick] (1.24, -2.48) -- (1.24, 1.52);
    \draw[gray, dashed, thick] (-1.23, -1.52) -- (-1.23, 2.48);

    % --- IGUSA CURVES (FRONT) ---
    % Angles 198 and 288 are in the front.

    % Curve: angle = 198 (Front Left)
    \draw[thick, red, domain=0:360, variable=\s, smooth, samples=60]
        plot ({ (4 + 1.2*cos(\s)) * cos(198) },
              { 0.125 * (4 + 1.2*cos(\s)) * sin(198) + 0.8*sin(\s) });

    % Curve: angle = 288 (Front Right)
    \draw[thick, red, domain=0:360, variable=\s, smooth, samples=60]
        plot ({ (4 + 1.2*cos(\s)) * cos(288) },
              { 0.125 * (4 + 1.2*cos(\s)) * sin(288) + 0.8*sin(\s) });

    % --- LABELS FOR Ig_p ---
    % Placed right outside the maximum radius (R=5.2) at the z=0 equator
%   \node[red] at ({5.8*cos(18)}, {0.725*sin(18)}) {$Ig^{0}_p$};
%   \node[red] at ({6.*cos(108)}, {0.725*sin(108)}) {$Ig^1_p$};
%   \node[red] at ({5.8*cos(198)}, {0.725*sin(198)}) {$Ig^2_p$};
%   \node[red] at ({6.*cos(288)}, {0.725*sin(288)}) {$Ig^\infty_p$};
    \node[red] at (2, -.95) {$Ig^1_p$};
    \node[red] at (4.5, 0) {$Ig^2_p$};
    \node[red] at (-.6, 1) {$Ig^\infty_p$};
    \node[red] at (-4.5, 0) {$Ig^0_p$};
    \node[gray] at (-3.35, -1.4) {$\lam=0$};
    \node[gray] at (-1.8, 2) {$\lam=\infty$};
    \node[gray] at (3.3, -1.4) {$\lam=2$};
    \node[gray] at (1.78, -2) {$\lam=1$};

    % --- INTERSECTIONS & LABELS ---

    % Top Oval Intersections (Twisted by pi/10 = 18 degrees)
    \coordinate (T0)   at ({4*cos(18)}, {0.5*sin(18) + 0.8});
    \coordinate (T1)   at ({4*cos(108)}, {0.5*sin(108) + 0.8});
    \coordinate (T2)   at ({4*cos(198)}, {0.5*sin(198) + 0.8});
    \coordinate (Tinf) at ({4*cos(288)}, {0.5*sin(288) + 0.8});

    \filldraw (T0) circle (1.5pt);% node[right, xshift=2pt, yshift=4pt] {$\lambda = 0$};
    \filldraw (T1) circle (1.5pt);% node[above, xshift=-5pt, yshift=5.5pt] {$\lambda = 1$};
    \filldraw (T2) circle (1.5pt);% node[left, yshift=4pt, xshift=-3pt] {$\lambda = 2$};
    \filldraw (Tinf) circle (1.5pt);% node[below right, xshift=4pt, yshift=-2pt] {$\lambda = \infty$};

    % Bottom Oval Intersections (Exactly below top ones)
    \coordinate (B0)   at ({4*cos(18)}, {0.5*sin(18) - 0.8});
    \coordinate (B1)   at ({4*cos(108)}, {0.5*sin(108) - 0.8});
    \coordinate (B2)   at ({4*cos(198)}, {0.5*sin(198) - 0.8});
    \coordinate (Binf) at ({4*cos(288)}, {0.5*sin(288) - 0.8});

    \filldraw (B0) circle (1.5pt);
    \filldraw (B1) circle (1.5pt);
    \filldraw (B2) circle (1.5pt);
    \filldraw (Binf) circle (1.5pt);

    % --- GREEN BULLETS ON Ig_p at j = \infty (Plane z=0) ---
    % Each red oval loops through z=0 twice: at R=5.2 (outer) and R=2.8 (inner)
    \foreach \ang in {18, 108, 198, 288} {
        % Outer intersections
        \filldraw[green!70!black] ({5.2*cos(\ang)}, {0.65*sin(\ang)}) circle (1.5pt);
        % Inner intersections
        \filldraw[green!70!black] ({2.8*cos(\ang)}, {0.35*sin(\ang)}) circle (1.5pt);
    }

\end{tikzpicture}
\label{p-fiber}
\end{equation}

\begin{rem}
(i) As in Example~\ref{compexam}(i), the non-smoothable locus of the fiber $(\ocM_{1,\Gamma})^\Gamma\otimes\FF_p$ is not equidimensional. In fact, this time it is even locally non-equidimensional.

(ii) A set-theoretic fix is simple -- just require that the core $E_0$ is supersingular in the supersingular configurations. However, a scheme-theoretic version requires some care. We will achieve this by taking into account the folding of $\Gamma$ on the semistable core $E_0$, that is, the composition $\Gamma\to E\to E_0$.
\end{rem}

\subsection{Equivariant deformations}
The next stage after classifying the $k$-points of $(\ocM_{1,\Gamma})^\Gamma$ is to study their deformations over $R=k[\veps]/(\veps^2)$. So, choose a $\Gamma$-marked curve $(E,\Gamma_k)$ with $\cha(k)=p$ and let us compute the tangent space to $(\ocM_{1,\Gamma})^\Gamma$ at this curve. By our convention these are tangent spaces over $\ZZ$.

\subsubsection{The basic exact sequence}\label{basicsec}
Let $Z_0$ be the leaf containing $0$ and let $Z_0,Z_1\dots Z_d$ be the chain connecting it to the closest core component $Z_d\subset E_0$. Let $P_1\.P_d$ be the nodes in the chain, that is, $P_j=Z_{j-1}\cap Z_j$, and let $P_{d+1}$ be the node in the core if it is not smooth. Set $\Gamma_{-1}=0$ and let $\Gamma_i$ be the stabilizer of $Z_i$ (in the sense that this is the set of elements taking $Z_i$ to itself). Note that $\Gamma_i$ is also the stabilizer of its node at infinity $P_{i+1}$. With these notation figure (\ref{types1}) is rewritten as follows, where $\gamma\in\Gamma\setminus\Gamma_0$:

\begin{equation}
\begin{tikzpicture}[scale=0.77, transform shape]

    % Vertical distance and horizontal leaf spread
    \pgfmathsetmacro{\yStep}{1.8}
    \pgfmathsetmacro{\xSpread}{1.1}

    % Explicit X-coordinates for each of the 4 graphs to finely tune spacing
    \pgfmathsetmacro{\xOne}{0}       % Graph 1: Deep
    \pgfmathsetmacro{\xTwo}{3.3}     % Graph 2: Canonical (3.3cm from Deep)
    \pgfmathsetmacro{\xThree}{7.6}   % Graph 3: Ordinary (4.3cm from Canonical)
    \pgfmathsetmacro{\xFour}{12.4}   % Graph 4: Tate (increased by 0.5cm)

    % =========================================================================
    % Type 1: Deep Supersingular (Root E0 -> single Leaf Z, 9 legs)
    % =========================================================================
    \begin{scope}[xshift=\xOne cm]
        \node[node_compressed] (E0_1) at (0, 2*\yStep) {$Z_1$};
        \node[node_compressed] (Z_1) at (0, 1*\yStep) {$Z_0$};

        \draw[edge_thick] (E0_1) -- (Z_1) node[midway, left, edge_label] {$P_1$};

        % 9 legs spread around 270 deg (bottom)
        \foreach \i in {1,...,9}
        {
            \pgfmathsetmacro{\angle}{270 + (\i-5)*10}
            \draw[leg] (Z_1) -- +(\angle:0.7);
        }

        \node[type_label] at (0, -1.8) {Deep};
    \end{scope}

    % =========================================================================
    % Type 2: Canonical Supersingular (Root E0 -> Branch Z -> Leaves Z0...2)
    % =========================================================================
    \begin{scope}[xshift=\xTwo cm]
        \node[node_compressed] (E0_2) at (0, 2*\yStep) {$Z_2$};
        \node[node_compressed] (Z_branch) at (0, 1*\yStep) {$Z_1$};

        \node[node_compressed] (Z0_2) at (-\xSpread, 0) {$Z_0$};
        \node[node_compressed] (Z1_2) at (0, -.3)          {$\gamma Z_0$};
        \node[node_compressed] (Z2_2) at (\xSpread, 0)  {$\gamma^2 Z_0$};

        % Edge l_0
        \draw[edge_thick] (E0_2) -- (Z_branch) node[midway, right, edge_label] {$P_2$};

        % Edges l_1
        \draw[edge_thick] (Z_branch) -- (Z0_2) node[midway, left, edge_label, xshift=-2pt] {$P_1$};
        \draw[edge_thick] (Z_branch) -- (Z1_2) node[pos=0.6, right, edge_label] {$\gamma P_1$};
        \draw[edge_thick] (Z_branch) -- (Z2_2) node[midway, right, edge_label, xshift=2pt] {$\gamma^2 P_1$};

        % 3 legs per leaf (9 total)
        \foreach \i in {1,2,3}
        {
            \pgfmathsetmacro{\angle}{270 + (\i-2)*25}
            \draw[leg] (Z0_2) -- +(\angle:0.7);
            \draw[leg] (Z1_2) -- +(\angle:0.7);
            \draw[leg] (Z2_2) -- +(\angle:0.7);
        }

        \node[type_label] at (0, -1.8) {Canonical};
    \end{scope}

    % =========================================================================
    % Type 3: Ordinary (Root E_0 -> Leaves Z0...2, 3 legs per leaf)
    % =========================================================================
    \begin{scope}[xshift=\xThree cm]
        \node[node_compressed, fill=green!5] (E_3) at (0, 1*\yStep) {$Z_1$};
        \node[node_compressed] (Z0_3) at (-\xSpread, 0) {$Z_0$};
        \node[node_compressed] (Z1_3) at (0, 0)          {$\gamma Z_0$};
        \node[node_compressed] (Z2_3) at (\xSpread, 0)  {$\gamma^2 Z_0$};

        % Edges l_1
        \draw[edge_thick] (E_3) -- (Z0_3) node[midway, left, edge_label, xshift=-2pt] {$P_1$};
        \draw[edge_thick] (E_3) -- (Z1_3) node[pos=0.6, right, edge_label] {$\gamma P_1$};
        \draw[edge_thick] (E_3) -- (Z2_3) node[midway, right, edge_label, xshift=2pt] {$\gamma^2 P_1$};

        % 3 legs per leaf (9 total)
        \foreach \i in {1,2,3}
        {
            \pgfmathsetmacro{\angle}{270 + (\i-2)*25}
            \draw[leg] (Z0_3) -- +(\angle:0.7);
            \draw[leg] (Z1_3) -- +(\angle:0.7);
            \draw[leg] (Z2_3) -- +(\angle:0.7);
        }

        \node[type_label] at (0, -1.8) {Ordinary};
    \end{scope}

    % =========================================================================
    % Type 4: Tate / Bad Reduction (Cycle of length 3)
    % =========================================================================
    \begin{scope}[xshift=\xFour cm]

        % Nested scope to handle the vertical shift cleanly
        \begin{scope}[yshift=1.05*\yStep cm]
            \pgfmathsetmacro{\rCyc}{0.85}        % Radius for the triangle vertices
            \pgfmathsetmacro{\rLf}{2.0}          % Radius for the leaves

            % Triangle Cycle (Vertices: E_0, E_1, E_2)
            \node[node_compressed] (C1) at (90:\rCyc) {$\gamma^2 Z_1$};
            \node[node_compressed] (C2) at (210:\rCyc) {$Z_1$};
            \node[node_compressed] (C3) at (330:\rCyc) {$\gamma Z_1$};

            % Edges of the triangle, labelled 'l'
            \draw[edge_thick] (C1) -- (C2) node[midway, above left, edge_label] {$P_2$};
            \draw[edge_thick] (C2) -- (C3) node[midway, below, edge_label, yshift=-2pt] {$\gamma P_2$};
            \draw[edge_thick] (C3) -- (C1) node[midway, above right, edge_label] {$\gamma^2 P_2$};

            % Leaves (Z_0, Z_1, Z_2)
            \node[node_compressed] (L1) at (90:\rLf) {$\gamma^2 Z_0$};
            \node[node_compressed] (L2) at (210:\rLf) {$Z_0$};
            \node[node_compressed] (L3) at (330:\rLf) {$\gamma Z_0$};

            % Connecting edges
            \draw[edge_thick] (C1) -- (L1) node[midway, right, edge_label, xshift=2pt] {$\gamma^2 P_1$};
            \draw[edge_thick] (C2) -- (L2) node[midway, right, edge_label, xshift=-1pt, yshift=-6pt] {$P_1$};
            \draw[edge_thick] (C3) -- (L3) node[midway, right, edge_label, xshift=-2pt, yshift=6pt] {$\gamma P_1$};

            % Legs (3 per leaf, projecting outwards)
            \foreach \i in {1,2,3} {
                \pgfmathsetmacro{\angle}{90 + (\i-2)*30}
                \draw[leg] (L1) -- +(\angle:0.7);
            }
            \foreach \i in {1,2,3} {
                \pgfmathsetmacro{\angle}{210 + (\i-2)*30}
                \draw[leg] (L2) -- +(\angle:0.7);
            }
            \foreach \i in {1,2,3} {
                \pgfmathsetmacro{\angle}{330 + (\i-2)*30}
                \draw[leg] (L3) -- +(\angle:0.7);
            }
        \end{scope}

        % Label correctly aligned with the other three graphs
        \node[type_label] at (0, -1.8) {Tate};
    \end{scope}

    % =========================================================================
    % CONTRACTION ARROWS
    % =========================================================================
    % Note: The stealth- style draws the arrow tip at the FIRST coordinate.

    % Arrow from Type 2 to Type 1 (Contracting bottom edges l_1)
    % Points leftwards from Canonical to Deep
    \draw[contraction_arrow]
        (\xOne + 1.4, 0.8*\yStep) -- (\xTwo - 1.1, 0.65*\yStep)
        node[midway, above, contraction_text] {$\Gamma P_1$};

    % Arrow from Type 2 to Type 3 (Contracting upper edge l_0)
    % Points rightwards from Canonical to Ordinary
    \draw[contraction_arrow]
        (\xThree - 1.3, 1.3*\yStep) -- (\xTwo + 1.3, 1.45*\yStep)
        node[midway, above, contraction_text] {$P_2$};

    % Arrow from Type 3 to Type 4 (Contracting cycle edges l)
    % Points rightwards from Ordinary to Tate
    \draw[contraction_arrow]
        (\xThree + 1.6, 1.05*\yStep) -- (\xFour - 2.3, 1.05*\yStep)
        node[midway, above, contraction_text] {$\Gamma P_2$};
\end{tikzpicture}
\label{types3}
\end{equation}

Since $\Gamma$ acts transitively on the layers of $E$, each orbit of the $\Gamma$-action on the sets of irreducible components and nodes contains a unique representative of the form $Z_i$ and $P_j$, respectively. In addition we note that $\Gamma_i$ acts on rational components through $\GG_a$ and hence acts trivially on the tangent space $T_{P_i}(Z_{i-1})$. Also it acts trivially on $Z_i$ and hence acts trivially on $T_{P_i}$. Thus the general theory recalled in Appendix, see \S\ref{summary} and (\ref{summaryeq}), yields the exact sequence, where $D_i$ is the set of marked points and nodes in $Z_i$, and $\nu\in\{0,1\}$ with $\nu=1$ in Tate's case.
\begin{equation}\label{basicseq}
0\to\oplus_{i=0}^d T_{(Z_i,D_i)}(\ocM_{g_i,n_i})^{\Gamma_i/\Gamma_{i-1}}\to T_{(E,\Gamma)}(\ocM_{1,\Gamma})^\Gamma\to\oplus_{j=1}^{d+\nu} T_{P_j}\stackrel{\partial}{\to}\dots
\end{equation}

Note that $g_i=0$ for $i<d$ and $g_d\le 1$ with equality if and only if the core is an elliptic curve. Also $\Gamma_i/\Gamma_{i-1}$ fixes the node at infinity and acts transitively on the set of finite nodes (or marked points if $i=0$). In particular, $n_i=|D_i|=|\Gamma_i/\Gamma_{i-1}|+1$.

\subsubsection{Smoothing the nodes}
A priori the connecting homomorphism $\partial$ provides an obstacle to an equivariant smoothing of the node over $\FF_p$. It will follow from
computations of deformation spaces (see Lemma~\ref{mainlem2} below) that at least in the smoothable case $\partial=0$ and hence different types of nodes can be smoothed independently. 

\begin{exam}\label{partialexam}
At this stage, for the sake of illustration we can easily show that $\partial$ has a non-trivial kernel whenever $(E,\Gamma)$ is a smoothable $\Gamma$-marked $k$-curve and the case is ramified: $p=\cha(k)$ is odd or the configuration is supersingular. Indeed, by Lemma~\ref{ramlem} any smoothing $(\calE,\Gamma_R)$ of $(E,\Gamma)$ over a mixed characteristic DVR $R$ is ramified over $p$, and hence induces a smoothing deformation over $\FF_p[\veps]/(\veps^2)$. The class of this deformation maps to a non-zero linear combination of types of nodes which lies in the kernel of $\partial$. In particular, if such a configuration contains only a single type $P$ of nodes (the ordinary case when $p$ is odd or the deep supersingular case), then $\partial(T_P)=0$.
\end{exam}

\subsubsection{Contribution of connected components}
Now, let us compute dimensions of the terms connected components contribute to the sequence (\ref{basicseq}).

\begin{lem}\label{ratcomp}
Let $k$ be an algebraically closed field of characteristic $p$, and let $\veps_p\in\{0,1\}$ be the parity of $p$.

(i) Let $\Gamma_0=\ZZ/p\ZZ$ act on $Z=\PP^1_k$ through $\FF_p$ by translations and let $\oGamma_0=\FF_p\cup\{\infty\}$. Then $\dim(H^1(Z,T_Z(-\oGamma_0))^{\Gamma_0})=\veps_p$.

(ii) Let $\Gamma=(\ZZ/p\ZZ)^2$ act on $Z=\PP^1_k$ through $\FF_p\oplus\lam\FF_p$ with $\lam\in k\setminus\FF_p$ by translations and let $\oGamma=\FF_p\oplus\lam\FF_p\cup\{\infty\}$. Then $\dim(H^1(Z,T_Z(-\oGamma))^\Gamma)=1+\veps_p$.

(iii) Let $\Gamma_0$ act on an ordinary elliptic $k$-curve $E$ through $\Gamma_0=E[p](k)$. Then $\dim(H^1(E,T_E(-\Gamma_0))^{\Gamma_0})=1$.
\end{lem}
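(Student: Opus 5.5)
The plan is to compute all three invariant spaces through the quotient map $\pi\:Z\to Z/G$, with $G$ the acting group, rather than through Serre duality. Duality turns invariants into coinvariants, and for the non-cyclic group in (ii) the two can have different dimensions in characteristic $p$, so it is the wrong tool here. Write $F$ for the sheaf in question: $T_Z(-\oGamma_0)$, $T_Z(-\oGamma)$ or $T_E(-\Gamma_0)$.

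\textbf{Step 1: reduce to equivariant cohomology.} In every case $\deg F<0$. In (i) and (ii) this is because $n=|\oGamma|\ge 3$; in (iii) the degree is $-p<0$. Hence $H^0(Z,F)=0$. The Hochschild--Serre spectral sequence $H^a(G,H^b(Z,F))\Rightarrow H^{a+b}_G(Z,F)$ then gives $H^1_G(Z,F)\toisom H^1(Z,F)^G$, because both $H^1(G,H^0)$ and $H^2(G,H^0)$ vanish.

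\textbf{Step 2: compute $H^1_G$ from the quotient side.} Here I use the second spectral sequence $H^a(Z/G,\mathcal H^b(G,\pi_*F))\Rightarrow H^{a+b}_G(Z,F)$. Since $Z/G$ is a curve, it yields a short exact sequence
\[0\to H^1(Z/G,(\pi_*F)^G)\to H^1_G(Z,F)\to H^0(Z/G,\mathcal H^1(G,\pi_*F))\to 0.\]
The sheaves $\mathcal H^1(G,\pi_*F)$ are skyscrapers supported on the images of points with nontrivial stabilizer.

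Case (iii). The action of $\Gamma_0=E[p](k)$ is free, so $\pi\:E\to E'=E/\Gamma_0$ is an \'etale isogeny. All local terms vanish, and $(\pi_*F)^{\Gamma_0}=T_{E'}(-O')$ has degree $-1$. Therefore $h^1=1$, which gives (iii).

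Cases (i) and (ii). Let $g(t)=\prod_{a\in G}(t-a)$. This is an additive polynomial of degree $n-1$ with constant nonzero derivative $c$. Put $u=g(t)$; then $\pi$ is \'etale over $\mathbb A^1_u$ and totally, wildly ramified over $u=\infty$. Since $\partial_t=c\,\partial_u$, the invariant sections over $\mathbb A^1_u$ are exactly $h(u)\,u\partial_u$. A valuation count at $t=\infty$ then identifies $(\pi_*F)^G$ with a line bundle of explicit degree on $\PP^1_u$, namely $T_{\PP^1_u}(-[0]-m[\infty])$ for the minimal admissible $m$. My first computation suggests this bundle has $H^1=0$. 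If that holds, the whole answer sits in the wild local term.

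\textbf{Step 3, the main obstacle: the local term $H^1(G,\widehat F_\infty)$.} Here $\widehat F_\infty=s^{a}k[\![s]\!]\partial_s$ with $s=1/t$, for the appropriate $a$, and $G$ acts by $s\mapsto s/(1+bs)$, $b\in G$. My first attempt is to filter $\widehat F_\infty$ by powers of $s$. The graded pieces carry a trivial $G$-action, so the problem reduces to tracking which $1$-cocycles $G\to k$ lift through the filtration. Concretely, I would express a cocycle as a derivation-like function $b\mapsto\sum_j c_j(b)s^j\partial_s$ and solve the coboundary equation degree by degree.

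For cyclic $G=\FF_p$ I expect exactly one obstruction, and only in the low degree where the binomial coefficients from expanding $(1+bs)^{-k}$ involve $p-1$ or $2$. This should give dimension $\veps_p$, which is (i). For $G=\FF_p\oplus\lam\FF_p$ there is an extra class coming from $\Hom(G,k)$ modulo the additive characters $b\mapsto b$ realized as coboundaries. The condition $\lam\notin\FF_p$ should make exactly one new class survive, giving $1+\veps_p$, which is (ii).

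If the filtration bookkeeping becomes unwieldy, the fallback is a direct \v{C}ech computation. Use the $G$-stable cover $U=\mathbb A^1_t$, $V=\PP^1\setminus\{g=0\}$, note that $g\partial_t$ is $G$-invariant, and compute
\[\Gamma(U\cap V,F)/(\Gamma(U,F)+\Gamma(V,F))\cong\bigl(k[t,g^{-1}]/(k[t]+\{h:\deg h\le 1-n\})\bigr)\cdot g\partial_t,\]
then take translation invariants via partial fractions in $g$. The delicate point in this fallback is that invariants of a quotient need not equal the quotient of invariants. This is precisely the local group cohomology issue again, and it is where the parity $\veps_p$ should appear.
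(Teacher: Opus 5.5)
Your Steps 1 and 2 are sound, and so is your guess there. An invariant field $\phi\,u\partial_u$ vanishes at $t=\infty$ iff $\mathrm{ord}_{1/u}(\phi)\ge 1$, so $(\pi_*F)^G\cong T_{\PP^1_u}(-[0]-2[\infty])\cong\cO(-1)$, and its $H^1$ vanishes. Part (iii) is essentially the paper's argument.

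\textbf{The gap is Step 3.} All the content of (i) and (ii) lies there, and you only state the answer you expect. The filtration of $s\,k\llbracket s\rrbracket\partial_s$ by powers of $s$ has infinitely many graded pieces. Each is a trivial module contributing $\mathrm{Hom}(G,k)$ to $H^1$ and $H^2(G,k)$ to $H^2$. Nothing in your sketch shows that all but the two lowest pieces cancel, and the mechanisms you cite (binomial coefficients involving $p-1$ or $2$, ``exactly one new class survives'') are not derived.

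The missing observation is a splitting in the coordinate $t$:
$\widehat F_\infty=t\,k\llbracket t^{-1}\rrbracket\partial_t=(k\oplus kt)\partial_t\oplus t^{-1}k\llbracket t^{-1}\rrbracket\partial_t$.
Both summands are stable under translation. The second is a direct summand of $k(\!(t^{-1})\!)\partial_t=k[t]\partial_t\oplus t^{-1}k\llbracket t^{-1}\rrbracket\partial_t$. That module is cohomologically trivial by the normal basis theorem, since $G$ acts faithfully on the field $k(\!(t^{-1})\!)$. Hence $H^1(G,\widehat F_\infty)\cong H^1(G,\mathrm{Aff})$, where $\mathrm{Aff}=k\oplus kt$ is two-dimensional.

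\textbf{The fallback also has an error.} The field $hg\,\partial_t$ vanishes at $\infty$ iff $\deg(hg)\le 1$, i.e. $\deg h\le 2-n$, not $1-n$. With your bound you compute $H^1$ of $F(-\infty)$, which has dimension $n-2$ instead of $n-3$.

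With the correct bound, partial fractions give $H^1(Z,F)\cong k[t]/(g\,k[t]+k+kt)\cong\mathrm{Map}(G,k)/\mathrm{Aff}$. Here $\mathrm{Map}(G,k)$ is the regular representation, and its invariants are the constants, which equal $\mathrm{Aff}^G$. The long exact sequence then gives $H^1(Z,F)^G\cong H^1(G,\mathrm{Aff})$. This is precisely how the ``invariants of a quotient'' issue is controlled.

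To finish the computation:
\begin{itemize}
\item A cocycle $b\mapsto\alpha(b)+\chi(b)t$ requires $\chi$ additive and $\alpha(b+b')-\alpha(b)-\alpha(b')=b\,\chi(b')$.
\item The left side is symmetric in $b,b'$, which forces $\chi\in k\iota$, where $\iota\colon G\into k$ is the inclusion.
\item For $\chi=\iota$ one can take $\alpha(b)=b^2/2$ when $p$ is odd. For $p=2$, setting $b=b'$ gives $b^2=0$ on $G$, a contradiction.
\item The coboundaries are exactly $\chi=0$, $\alpha\in k\iota$.
\end{itemize}
So the dimension is $(r-1)+\veps_p$ with $r=\dim_{\FF_p}G$. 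This gives $\veps_p$ in (i) and $1+\veps_p$ in (ii); the hypothesis $\lam\notin\FF_p$ only ensures $r=2$.

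Your dismissal of duality is also unwarranted. The paper stays on the dual side and handles coinvariants through module structure. The module $V=H^0(\Omega^{\otimes 2}_Z(\oGamma))$ has one-dimensional socle, so it embeds in $k[\Gamma]$ with codimension $2$. This forces $V=m^2+(v)$, and $\dim V/mV=1+\veps_p$ follows.
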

\begin{proof}
(i) The spaces $V=H^0(Z,\Omega^{\otimes 2}_Z(\oGamma_0))$ and $W=H^1(Z,T_Z(-\oGamma_0))$ are dual. The elements of $V$ are of the form $h(t)dt^{\otimes 2}$, where $h(t)\in k[t,(t^p-t)^{-1}]$ has at most simple poles at $\FF_p$ and zero of order at least 3 at infinity, that is, $h=f/(t^p-t)$ with $\deg(f)\le p-3$. If $h$ is $\Gamma_0$-invariant, $f$ is also $\Gamma_0$-invariant, and hence it is constant. Therefore $\dim(V^{\Gamma_0})=\veps_p$ with the invariant generator $\frac{dt^{\otimes 2}}{t^p-t}$ when $p>2$. Using that $k[\Gamma_0]=k[\veps]/(\veps^p)$, where $\veps=1-\gamma$ and $\gamma\in\Gamma_0$ is a generator, and $V^{\Gamma_0}=\{v\in V|\ \veps v=0\}$, we obtain that $V$ is a cyclic $k[\Gamma_0]$-module (of dimension $p-2$). Hence its dual $W$ is also cyclic and we obtain that $\dim(W^{\Gamma_0})=\veps_p$.

(ii) As above, the spaces $V=H^0(Z,\Omega^{\otimes 2}_Z(\oGamma))$ and $W=H^1(Z,T_Z(-\oGamma))$ are dual, and this time $\dim(V)=p^2-2$ is positive also for $p=2$. Thus $V^\Gamma\neq 0$ and we claim that $\dim(V^\Gamma)=1$. Indeed, any element $f\in V$ has a simple pole at an element of $\FF_p\oplus\lam\FF_p$, hence by the transitivity of the action any element of $V^\Gamma$ has simple poles at all $p^2$ points of $\FF_p\oplus\lam\FF_p$. But if $V^\Gamma$ contains linearly independent $v,w$ there would exist a non-zero $v+aw$ which has no pole at 0.

The contradiction shows that $\dim(V^\Gamma)=1$, but this time $\Gamma$ is not cyclic, so $V_\Gamma$ (and its dual $W^\Gamma$) may have another dimension. Fortunately, one can still avoid explicit computations as follows. The group ring $R=k[\Gamma]=k[\veps,\nu]/(\veps^p,\nu^p)$ is lci, and $k=V^\Gamma=\{v\in V|\ m_Rv=0\}$ is the socle of $V$. It follows that the injective hull of $V$ is the injective hull of $k=R/m_R$, which is $R$ itself. Thus $V\into R$ as modules, and using that $\dim(R)-\dim(V)=2$ we obtain that 1 and $m=m_R$ are not contained in $V$, so $V$ is spanned by $m^2$ and an element $v\in m$. If $p=2$, then $m^2$ is generated by a single element and lies in $mv$, hence $V$ is generated by $v$. If $p>2$, then $m^2/m^3$ is three-dimensional and $mv/m^2v$ is its two-dimensional subspace, and hence $V$ is generated by $v$ and an element of $m^2$. All in all, $V_\Gamma=V/mV$ is of dimension $1+\veps_p$, and hence the same is true for its dual $W^\Gamma$.

(iii) The isogeny $E\to E'=E/\Gamma_0$ is \'etale of degree $p$ and $O'$ is the image of $\Gamma_0$. The pullback induces an isomorphism $H^1(E',T_{E'}(-O'))=H^1(E,T_{E}(-\Gamma_0))^{\Gamma_0}$, and the left hand side space is dual to $H^0(E',\cO_{E'}(O'))=k$.
\end{proof}

\subsubsection{Dimension count and interpretations}\label{dimcount}
%Now we have enough info on contribution of all terms and can make some conclusions. First we can deduce that the first order deformations are always unobstructed.

%\begin{cor}\label{partialcor}
%If $(E,\Gamma)$ is a smoothable $\Gamma$-marked $k$-curve, then $\partial(T_P)=0$ for any node $P\in E$.
%\end{cor}
%\begin{proof}
%We should consider the only case not covered by Lemma~\ref{partiallem}, when $p=2$ and the configuration is ordinary or Tate. By sequence (\ref{basicseq}) and Lemma~\ref{ratcomp} the tangent space over $\FF_2$ is at most two-dimensional because $\veps_p=0$ and the only contributions are from the core component and the node, and both are at most one-dimensional. Therefore the absolute cotangent space to $(\ocM_{1,\Gamma})^\Gamma$ at $x=(E,\Gamma)$ is at most three-dimensional. On the other hand, $(E,\Gamma)$ can be lifted in characteristic zero both to a point in $X(2)$ and in the non-semistable Igusa or Tate component on diagram (\ref{types2}). Therefore, it lies in the intersection of (at least) two two-dimensional components, and the cotangent space is at least three-dimensional. Thus, the absolute tangent space is three-dimensional and it contains a non-trivial contribution of the node. In particular, $\partial(T_P)=0$.
%\end{proof}

We already have enough information to show that $(\ocM_{1,\Gamma})^\Gamma$ is quite singular along the $\FF_p$-fiber.

\begin{rem}\label{dimcountrem}
Let $p$ be odd and let $(E,\Gamma)$ be a $\Gamma$-marked $k$-curve with a single type of nodes.

(i) Assume the configuration is ordinary. By Example~\ref{partialexam} and Lemma~\ref{ratcomp} the deformation space to $(\ocM_{1,\Gamma})^\Gamma$  at $(E,\Gamma)$ is three-dimensional because the two types of components and the node contribute one dimension. This fits our knowledge that $(\ocM_{1,\Gamma})^\Gamma$ is not normal along the ordinary locus because it lies in the intersection of the closure of the generically smooth and Igusa components from figure (\ref{types1}).

(ii) In the same manner one checks that the space of deformations in the deep supersingular case is at least of dimension $1+2\veps_p=3$ combined from the contributions of the two components. This component of $(\ocM_{1,\Gamma})^\Gamma$ is generically non-liftable to the mixed characteristic, hence it is generically non-reduced.
\end{rem}

\subsubsection{Conclusions}
To cut out $\cX(p)$ from $(\ocM_{1,\Gamma})^\Gamma$ one has to solve two issues: (1) remove the supersingular configurations with an ordinary core, (2) eliminate the non-smoothable components and non-reduced structure due to existence of parasitic equivariant deformations of rational components. The first task will be easily solved by use of the retraction onto the core (or $\phi_p$). The second task is more delicate and will be solved by imposing a compatibility condition with certain vector fields of a global nature. This seems to be unavoidable because the non-smoothable deformations which are ``parasitic'' for our goals do have a natural local interpretation as we will see in the next subsection.

\subsection{Fixed loci}\label{fixedloci}
We can now describe the structure of a few basic $\Gamma$-fixed and $\Gamma_0$-fixed loci, where we fix the notation $\Gamma_0=\ZZ/p\ZZ$ throughout this section. The examples in genus 0 are quite instructive, illustrate some phenomena and can be viewed as simple basic blocks from which $(\ocM_{1,\Gamma})^\Gamma$ is built on the level of deformations.

Our arguments are based on combinatorial descriptions of the sets of geometric points and computation of the tangent spaces via Lemma~\ref{ratcomp}, and we will not try to figure out the precise non-reduced structure beyond what is easily obtained without computations.

\subsubsection{$\oGamma_0$-marked rational curves}
Set $\oGamma_0=\Gamma_0\cup\{\infty\}$ and let $\Gamma_0$ act by translations (so, $\infty$ is fixed).

\begin{lem}\label{ratGamma0}
The fixed locus scheme $S=(\ocM_{0,\oGamma_0})^{\Gamma_0}$ is naturally isomorphic to $\Spec(\ZZ[\xi_p])$.% The universal curve is $\PP^1$ with sections $\frac{1-t}{t(1-\xi_p)}$ for $t\in\{0\}\cup\mu_p$.
\end{lem}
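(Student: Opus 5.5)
We must show that the fixed locus $S=(\ocM_{0,\oGamma_0})^{\Gamma_0}$ is $\Spec(\ZZ[\xi_p])$. The plan is to construct an explicit family over $\ZZ[\xi_p]$, show it induces a morphism $\Spec(\ZZ[\xi_p])\to S$, and prove this morphism is an isomorphism. We check bijectivity on geometric points and compare tangent spaces, using that $S$ is a closed substack of the smooth scheme $\ocM_{0,p+1}$ (no automorphisms in genus $0$), hence a scheme proper over $\ZZ$.

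\emph{Geometric points.} Let $k$ be algebraically closed and let $(C,\oGamma_0)$ be a $\Gamma_0$-equivariant stable marked rational curve over $k$.

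If $\cha(k)\neq p$, the point $\infty$ is fixed by $\Gamma_0$. Consider the component $C_\infty$ containing $\infty$; it is stabilized by $\Gamma_0$. If $\Gamma_0$ acts trivially on the dual tree, each component is stabilized and fixes its points lying towards $\infty$, so the action factors through $\mu_p\subset\GG_m$. The leaf carrying $0$ would then contain all $p$ finite marks, so $C$ is irreducible. The orbit of $0$ is $\mu_p$-shaped, and normalizing $t(0)=1$, $t(\infty)=\infty$ with the other fixed point at $t=0$ leaves exactly $p-1$ choices: $t(\gamma_1)=\zeta$ for a primitive $p$-th root $\zeta$.

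If the action on the tree were nontrivial, a component would be moved. The stabilizer of any component is $0$ or $\Gamma_0$, so the orbit of that component has size $p$. Stability then forces a central component containing $\infty$ and $p$ branches that each carry one mark; such branches are unstable, a contradiction. This gives $p-1$ points, matching the points of $\Spec(\ZZ[\xi_p])$ over $\overline{\QQ}$ and over $\overline{\FF}_\ell$ with $\ell\ne p$.

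If $\cha(k)=p$, the same argument applies with $\mu_p$ trivial, so the action is through $\GG_a$ on an irreducible $C=\PP^1$. There is then a unique point $t(\gamma)=\gamma$, matching the single closed point over $p$, which is totally ramified.

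\emph{The family.} Over $\ZZ[\xi_p]$, take $\PP^1$ with $\infty$ and the sections $s_i=\frac{1-\xi_p^i}{1-\xi_p}$ for $i\in\FF_p$. These $p$ sections are pairwise disjoint in every fiber, since $\frac{\xi_p^i-\xi_p^j}{1-\xi_p}$ is a unit, and they reduce to $i\bmod p$ modulo $(1-\xi_p)$. The map $t\mapsto \xi_p t+1$ realizes $s_i\mapsto s_{i+1}$ and fixes $\infty$, so the family is equivariant and defines $f\colon\Spec(\ZZ[\xi_p])\to S$. Over $\ZZ[1/p]$, the fixed locus of a tame action on the smooth scheme $\ocM_{0,p+1}$ is smooth (the tameness argument used in the proof of Lemma~\ref{fixedlocus}), so $f[1/p]$ is a bijective morphism between étale $\ZZ[1/p]$-schemes. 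It is therefore an isomorphism onto $S[1/p]$, and $f$ is a closed immersion there.

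\emph{The fiber at $p$: main obstacle.} It remains to see that $S$ has no extra structure at the point $x$ over $\FF_p$. Since $f$ is finite and surjective, it suffices to show that $S$ is reduced at $x$ with $\dim_{\FF_p}T_{x}S$ equal to that of $\ZZ[\xi_p]$ at its maximal ideal. That dimension is $1$ for $p>2$, since the ring is ramified with uniformizer $1-\xi_p$; for $p=2$ the dimension is $0$ relative to $\ZZ$, because then $\ZZ[\xi_2]=\ZZ$.

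For the relative tangent space over $\FF_p$, the deformation theory of \S\ref{summary} gives $T=H^1(Z,T_Z(-\oGamma_0))^{\Gamma_0}$, because there are no nodes. By Lemma~\ref{ratcomp}(i) this has dimension $\veps_p$. The absolute cotangent space at $x$ is spanned by $p$ and this relative part, and $p$ lies in $m_x^2$ up to the ramification.

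This yields an injection $\cO_{S,x}\twoheadleftarrow\ZZ_{(p)}[u]$, with $u\mapsto 1-\xi_p$ when $p>2$. Then $\cO_{S,x}$ is a quotient of a regular two-dimensional ring, it is finite over $\ZZ_{(p)}$, and it surjects onto $\ZZ_{(p)}[\xi_p]$. The remaining work is to show that $\cO_{S,x}$ is $\ZZ$-flat, i.e.\ has no $p$-torsion. My first attempt will be to identify it with the one-dimensional quotient $\ZZ_{(p)}[u]/(\Phi)$ via the explicit equation relating $u$ and the coordinate of $s_1$: $\gamma$ acts by $t\mapsto at+1$, and the relation $a^p=1$ with $\sum a^i=0$ forces the cyclotomic relation $\Phi_p(a)=0$. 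A single equation cut out in a two-dimensional regular local ring then gives a flat, Cohen–Macaulay ring equal to $\ZZ_{(p)}[\xi_p]$.
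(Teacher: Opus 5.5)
Up to the point over $p$ your argument follows the paper's: the same classification of geometric points, \'etaleness of $S\otimes\ZZ[1/p]$ by tameness, and Lemma~\ref{ratcomp}(i) for the tangent space at the $\FF_p$-point $x$. Your explicit family over $\ZZ[\xi_p]$ is a useful addition, since it shows directly that the generic fibre is a single $\Spec\QQ(\xi_p)$.

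The gap is at $x$ for odd $p$. Your criterion ``$S$ reduced at $x$ with the same tangent dimension as $\ZZ[\xi_p]$'' fails for relative tangent spaces, and relative tangent spaces are all Lemma~\ref{ratcomp}(i) computes. Take the order $\ZZ[w]\subset\ZZ[\xi_p]$ with $w=p(1-\xi_p)$. It is reduced and flat, and it coincides with $\ZZ[\xi_p]$ after inverting $p$. Moreover $\ZZ[w]/p\ZZ[w]\cong\FF_p[X]/(X^{p-1})$, so it has a single point over $p$ with one-dimensional relative tangent space. Still $1-\xi_p\notin\ZZ[w]$. What distinguishes the two rings is the absolute cotangent space: whether $p\in m_x^2$, equivalently whether $x$ lifts to a $\Gamma_0$-marked curve over $\ZZ/p^2\ZZ$. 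That is exactly the claim you assert ``up to the ramification'' without proof, and no computation over $\FF_p$ can detect it. The paper's proof makes the same move quickly, passing from $m_R=(p,\veps)$ to $m_R=(\veps)$ without further comment.

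The idea in your last paragraph closes the gap, and in fact proves the whole lemma without deformation theory. It should be run through the map $f$, not through ``a single equation cuts out a Cohen--Macaulay ring'', which presupposes that the kernel is generated by $\Phi_p$.
\begin{itemize}
\item All geometric points are irreducible configurations, so the closed subscheme $S$ lies in the open affine $\cM_{0,p+1}$. Normalize the marks $\infty,0,1$ to $t=\infty,0,1$.
\item A $T$-point is fixed iff there is an automorphism of $\PP^1_T$ fixing $\infty$ and sending mark $i$ to mark $i+1$. It is unique and of the form $t\mapsto at+1$ with $a\in\cO_T^\times$, so $a$ is a regular function on $S$.
\item The mark coordinates are then forced to be $s_i=1+a+\dots+a^{i-1}$. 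The only remaining condition is $s_p=s_0=0$, i.e.\ $\Phi_p(a)=0$.
\item Conversely, over $\ZZ[a]/(\Phi_p(a))$ one has $a^p=1$, and for $i\neq j$ the difference $s_i-s_j=a^j(1-a^{i-j})/(1-a)$ is a unit.
\end{itemize}
Hence $\cO(S)=\ZZ[a]/(\Phi_p(a))$, and $f^*$ sends $a\mapsto\xi_p$, so it is an isomorphism.

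Locally this supplies exactly what your tangent-space route lacked. With $u=1-a$, the relation $\Phi_p(1-u)=0$ reads $p=u\,g(u)$ with $g(u)\equiv -u^{p-2}\bmod p$. So $p\in u\,m_x\subseteq m_x^2$ and $\cO_{S,x}$ is a DVR. Finally, the composite $\ZZ_{(p)}[u]/(\Phi_p(1-u))\twoheadrightarrow\cO_{S,x}\to\ZZ_{(p)}[\xi_p]$ is bijective, which gives the isomorphism at $x$.
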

\begin{proof}
First, let us show that the geometric points of $S$ are as claimed. Let $(C,\oGamma_0)$ be a $k$-point. Since $C$ is of genus zero and $\Gamma_0$ is a simple group, it is easy to see that any $\Gamma_0$-stable configuration is irreducible. Thus $C=\PP^1_k$ and we can assume that $\oGamma_0\into\PP^1_k$ sends infinity to infinity, and hence $\Gamma_0$ acts through $\GG_a(k)\rtimes\GG_m(k)$. The action is through $\GG_a(k)$ when $\cha(k)=p$ and through $\GG_m(k)$ otherwise. In the first case, we can choose the coordinate so that $0,1\in\Gamma_0$ are mapped to $0,1\in \PP^1_k$, and hence $S(k)$ is a singleton parameterizing the configuration $\oGamma_0=\FF_p\cup\{\infty\}\into\PP^1_k$. In the second case, $\Gamma_0$ acts on the tangent space at infinity via a character $\Gamma\toisom\mu_p\subset k^\times$ and this character is an invariant of the conjugacy class. In fact, we have chosen a coordinate so that the second fixed point of the action is at the origin, and $0\in\Gamma_0$ is mapped to $1\in\PP^1_k$, while the image of the generator $1\in\Gamma_0$ in $\mu_p\subset\PP^1_k$ determines the configuration. Alternatively, one can spell this as follows: $S(k)$ consists of $p-1$ points which parameterize $\PP^1_k$ with marked points $\oGamma_0\toisom\mu_p\cup\{\infty\}$ and the action is through a given character $\Gamma_0\toisom\mu_p$.

Since $S=\Spec(R)$ is finite over $\ZZ$, this already implies that $\Spec(\ZZ[\xi_p])$ is the normalized reduction of $S$. It remains to note that $S$ is regular because $S\otimes\ZZ[1/p]$ is \'etale over $\Spec(\ZZ[1/p])$, and it follows from Lemma~\ref{ratcomp}(i) that the cotangent space $m_x/m_x^2$ at the $\FF_p$-point $x\in S$ is one-dimensional. Indeed, for $p=2$ the tangent space to $S\otimes\FF_2$ is trivial, and hence $m_x=(2)$, and if $p>2$, then the tangent space to $S\otimes\FF_p$ is one-dimensional, and hence $m_R=(p,\veps)$, where $\veps$ is nilpotent in $R/pR$. So, for $p>2$ we also have that $m_R=(\veps)$ is principal. In fact, $m_R$ is always generated by $\veps=1-\xi_p$, regardless to the parity of $p$.
\end{proof}

In particular, we see that $(\ocM_{0,\oGamma_0})^{\Gamma_0}\otimes\FF_p=\Spec(\FF_p[\veps]/(\veps^{p-1}))$ is non-reduced for $p>2$, as we already observed in \S\ref{dimcount}. However, the deformations which are non-smoothable over $\FF_p$ can be lifted to a regular (but non-smooth) $\ZZ$-scheme, and the multiplicative deformations of the action, which looked pathologically in characteristic $p$, naturally assemble into the deformation to zero characteristic.

\subsubsection{$\oGamma$-marked rational curves}
Next, let us describe the fixed moduli locus of the whole $\Gamma$ acting on rational curves through the set $\oGamma=\Gamma\cup\{\infty\}$ of marked points.

\begin{lem}\label{ratGamma}
The fixed locus scheme $S=(\ocM_{0,\oGamma})^{\Gamma}$ is purely one-dimensional and consists of $p+1$ clusters $S_\lam\toisom\Spec(\ZZ[\xi_p]\otimes\ZZ[\xi_p])$, where $\lam\in\PP^1(\FF_p)$, of $p-1$ horizontal components isomorphic to $\Spec(\ZZ[\xi_p])$, and a vertical component $V$ with reduction $\PP^1_{\lam,\FF_p}$ meeting the clusters at the $p+1$ points of $\PP^1(\FF_p)$. If $p>2$, then $V$ is generically non-reduced and the nil-radical on $\PP^1_\lam\setminus\PP^1(\FF_p)$ is invertible, and if $p=2$, then $V$ is smooth at any point $\lam\notin\PP^1(\FF_p)$.
\end{lem}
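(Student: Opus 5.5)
We follow the pattern of Lemma~\ref{ratGamma0}: first classify the geometric points of $S=(\ocM_{0,\oGamma})^{\Gamma}$, then use Lemma~\ref{ratcomp}(ii) to control tangent spaces.

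\emph{Step 1: points over $\ZZ[1/p]$.} Let $(C,\oGamma)$ be a $k$-point with $p\in k^\times$. There are no faithful $\Gamma$-actions on a single $\PP^1$ in this case, because the action at a fixed point goes through $\GG_m$ and $\mu_p$ is cyclic. So $C$ is reducible, and the stabilization argument of \S\ref{classec} forces a two-layer configuration. There is a component $Z$ through $\infty$, on which $\Gamma$ acts through a quotient $\Gamma/\Gamma_0\toisom\mu_p$, and $p$ leaves hang at $\mu_p\subset Z$. Each leaf carries $p$ marked points permuted by $\Gamma_0$ through a character $\Gamma_0\toisom\mu_p$. The discrete data are the line $\Gamma_0\subset\Gamma$, which gives $p+1$ choices indexed by $\lam\in\PP^1(\FF_p)$, together with two characters. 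Each such configuration is rigid, so each $\lam$ gives a cluster of reduced points.

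\emph{Step 2: the clusters over $\ZZ$.} Near each $\lam$ we realize the cluster as a product of two copies of the fixed locus of Lemma~\ref{ratGamma0}. We use the map gluing the $\oGamma_0$-marked leaf (for $\Gamma_0$) to the $\overline{\Gamma/\Gamma_0}$-marked component $Z$ (for $\Gamma/\Gamma_0$). Gluing at the node gives a closed immersion of $\Spec(\ZZ[\xi_p])\times\Spec(\ZZ[\xi_p])=\Spec(\ZZ[\xi_p]\otimes\ZZ[\xi_p])$ into the boundary stratum, landing in $S$. We then check that this is an isomorphism onto the part of $S$ with a node of this type. Over $\ZZ[1/p]$ the source is étale with $p-1$ components, each $\cong\Spec(\ZZ[\xi_p])$, matching the $(p-1)^2$ pairs of characters grouped into Galois orbits. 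Over $\FF_p$ both factors collapse to the canonical configuration of type (iii) with parameter $\lam$.

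\emph{Step 3: the $\FF_p$-fiber.} In characteristic $p$, the action on a component through $\infty$ goes through $\GG_a$. For $\lam\notin\PP^1(\FF_p)$ we get the irreducible configuration $\FF_p\oplus\lam\FF_p\subset\PP^1$. For $\lam\in\PP^1(\FF_p)$ we get the two-layer configuration from Step 2. These assemble into the vertical component $V$ with reduction $\PP^1_{\lam,\FF_p}$, meeting the clusters at $\PP^1(\FF_p)$. No other points exist, so $S$ is purely one-dimensional. Each horizontal component is the closure of a characteristic-zero point, and $V$ is a full curve in the $\FF_p$-fiber.

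\emph{Step 4: tangent spaces along $V$ (the main obstacle).} At $\lam\notin\PP^1(\FF_p)$ the curve is smooth, so the tangent space of $S\otimes\FF_p$ is $H^1(Z,T_Z(-\oGamma))^\Gamma$. By Lemma~\ref{ratcomp}(ii) this has dimension $1+\veps_p$.
\begin{itemize}
\item For $p=2$ it is one-dimensional, and $V$ has dimension one there. So $S\otimes\FF_2$, and hence $V$, is smooth at such points.
\item For $p>2$ it is two-dimensional while $V_{\rm red}$ is one-dimensional, so $V$ is generically non-reduced.
\end{itemize}
For invertibility of the nilradical $\mathcal N$ on $\PP^1_\lam\setminus\PP^1(\FF_p)$, the plan is to show that the tangent space is exactly $T(V_{\rm red})\oplus L$ with $L$ a line. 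We then identify $L$ as an $\cO$-module via a relative version of Lemma~\ref{ratcomp}(ii) over the smooth base $V_{\rm red}$. This gives $\mathcal N/\mathcal N^2$ locally free of rank one, hence $\mathcal N$ locally monogenic. The hard part is controlling higher powers of $\mathcal N$, so as to conclude $\mathcal N$ is an invertible ideal on $V_{\rm red}$, rather than only determining its first-order piece. I expect this to require an explicit local description of the universal family near a deep point. I would try to handle it by comparing with the multiplicative deformations of the $\Gamma_0$-leaf as in Lemma~\ref{ratGamma0}, where $\veps=1-\xi_p$ generates and $\veps^{p-1}\sim p$.
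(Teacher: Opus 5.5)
Your Steps 1--3 follow the paper's argument. Step 2 is in fact a sharper route than the paper's. Identifying $S_\lam$ with the $\Gamma$-fixed part of the closed boundary stratum that splits off the $p$ cosets of $\Gamma_0$ gives the product $(\ocM_{0,\overline{\Gamma/\Gamma_0}})^{\Gamma/\Gamma_0}\times(\ocM_{0,\oGamma_0})^{\Gamma_0}$, after fixing a splitting of $\Gamma\to\Gamma/\Gamma_0$. This yields $S_\lam\cong\Spec(\ZZ[\xi_p]\otimes\ZZ[\xi_p])$ scheme-theoretically over $\ZZ$. The paper only describes $S\otimes\ZZ[1/p]$ and the single $\FF_p$-point $P_\lam$.

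The gap is in Step 4, and it has two parts.

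First, Lemma~\ref{ratcomp}(ii) computes the tangent space of the fiber $S\otimes\FF_p$, but $V$ is a component of $S$. To conclude that $V$ is smooth for $p=2$, or that its nilradical is locally monogenic for $p>2$, you need $p=0$ in $\cO_{S,x}$ at deep points $x$. Equivalently, the deep configuration must admit no deformation over $W_2(k)$. Set-theoretic non-liftability is not enough: $\ZZ_{(2)}[f]_{(2,f)}/(4,2f)$ is supported in the special fiber and has regular reduction modulo $2$, yet it is not reduced. This is what the paper's appeal to non-liftability has to supply, and your proposal never addresses it.

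Second, the ``hard part'' you set yourself cannot be done. Making $\mathcal N$ an invertible $\cO_{V_{\rm red}}$-module forces $\mathcal N^2=0$, and that is false for $p\ge5$. The explicit chart you anticipate is short:
\begin{itemize}
\item Near a deep point, normalize $x_0=0$, $x_{\gamma_1}=1$, $x_{\gamma_2}=\lam$. Then $\gamma_1$ acts by $t\mapsto(1+\alpha)t+1$, and commutativity forces $\gamma_2$ to act by $t\mapsto(1+\alpha\lam)t+\lam$.
\item The relations $\gamma_i^p=1$ become $\Phi_p(1+\alpha)=0=\lam\,\Phi_p(1+\alpha\lam)$, where $\Phi_p$ is the $p$-th cyclotomic polynomial.
\item Here $\lam$ is a unit and $\alpha$ vanishes at $x$. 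The identity
\[
\Phi_p(1+\alpha\lam)-\lam^{p-1}\Phi_p(1+\alpha)=pU,\qquad U\equiv 1-\lam^{p-1}\pmod{\alpha},
\]
has $U$ a unit because $\lam(x)\notin\FF_p$, so $p=0$ locally.
\end{itemize}
Hence $S$ is locally $\Spec\bigl(\FF_p[\lam,(\lam^p-\lam)^{-1}][\alpha]/(\alpha^{p-1})\bigr)$.

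This settles $p=2$, where $\alpha=0$. For $p>2$ it shows that $\mathcal N=(\alpha)$ is principal with $\mathcal N/\mathcal N^2$ invertible, but $\mathcal N^{p-2}\neq 0$, so $\mathcal N^2=0$ only when $p=3$. Thus ``invertible'' can only mean locally principal, and that needs no control of higher powers. Once $p=0$ is known, $\cO_{S,x}$ has embedding dimension $2$, so it is a quotient of a $2$-dimensional regular local ring. In that ring the height-one prime cutting out the smooth curve $V_{\rm red}$ is principal. Your relative version of Lemma~\ref{ratcomp}(ii) is therefore unnecessary.
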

\begin{proof}
If $\cha(k)\neq p$, then there is no faithful action of $\Gamma$ on $\PP^1_k$, and any stable $\Gamma$-equivariant curve $(C,\oGamma_0)$ of genus zero consists of a core $C_0=\PP^1$ with $p$ leaves $Z_1\.Z_p$ and $p$ marked points on each leaf (an ordinary configuration but with core of genus zero). The stabilizer $\Gamma_0$ of leaves is a cyclic subgroup of $\Gamma$ of order $p$, hence there exists $p+1$ choices of (clusters of) components parameterized by $\lam\in\PP^1(\FF_p)$. Fix the choice of $\Gamma_0$ and let us describe the cluster $S_\lam$. Precisely as in the proof of Lemma~\ref{ratGamma0}, the action is determined by the characters $\Gamma/\Gamma_0\toisom\mu_P$ corresponding to the action at $\infty\in C_0$ and the character $\Gamma_0\toisom\mu_p$ corresponding to the action at the nodes of $Z_i$ (these characters coincide because of the $\Gamma$-action). This proves that $S\otimes\ZZ[1/p]=\coprod_\lam S'_\lam$ where each cluster $S'_\lam$ is of the form $\Spec(\ZZ[1/p,\xi_p]\otimes\ZZ[1/p,\xi_p])$ and hence is a disjoint union of $p-1$ copies of $\Spec(\ZZ[1/p,\xi_p])$.

If $\cha(k)=p$, then there are two types of configurations: the ordinary one with one core and $p$ leaves and the non-liftable deep supersingular one with $C=\PP^1_k$ acted on by $\Gamma=\FF_p\oplus\lam\FF_p$. Each ordinary cluster over $\FF_p$ consists of a single point $P_\lam$ because one can choose coordinates on the core and the leaves so that $\Gamma/\Gamma_0$ and $\Gamma_0$ are identified with $\FF_p$. The supersingular cluster is parameterized by $\lam\in\PP^1(k)\setminus\PP^1(\FF_p)$. Thus, on the set-theoretic level $S$ consists of horizontal clusters $S_\lam$, a vertical component $V$ with reduction $\PP^1_{\lam,\FF_p}$ and the intersection of $S_\lam$ with $V$ occurs at the point $P_\lam$.

The claim about the nilradical for $p>2$ and smoothness for $p=2$ follows from the fact that the relevant tangent spaces are of dimension $1+\veps_p$ by Lemma~\ref{ratcomp}(ii), and the configurations with $\lam\notin\PP^1(\FF_p)$ are non-liftable to mixed characteristic.
\end{proof}

\subsubsection{$\Gamma_0$-marked elliptic curves}
Next, we study $\Gamma_0$-marked elliptic curves. From the classical theory we know that over $\ZZ[1/p]$ there exists at least the component  $X_1(p)$ which parameterizes generalized elliptic curves with a fixed $p$-torsion point. We should describe how it is extended to $\FF_p$ (this time we will just recover the Katz-Mazur compactification) and what other components are. We assume that $p>3$ to avoid dealing with Picard invariance conditions.

\begin{lem}\label{ellipticgamma_0}
Assume that $p>3$. Then the fixed locus $S=(\ocM_{1,\Gamma_0})^{\Gamma_0}$ is purely two-dimensional and consists of two irreducible components:
\begin{itemize}
\item[(1)] The Katz-Mazur model $S_1=\tilcX_1(p)$, whose $\FF_p$-fiber consists of the Igusa curve $\Ig_p$ parameterizing \'etale level structures, and a non-reduced canonical component $\PP^1_{j,\FF_p}$ parameterizing Drinfeld level structures of multiplicity $p$.
\item[(2)] The projective line $S_2=\PP^1_{j,\ZZ[\xi_p]}$ parameterising configurations with an irreducible core and one leaf containing all marked points.
\end{itemize}
The components $S_1$ and $S_2$ intersect along $\PP^1_{j,\FF_p}$.
\end{lem}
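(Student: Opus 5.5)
I would follow the same pattern as in Lemmas~\ref{ratGamma0} and \ref{ratGamma}. First classify the geometric points of $S$ combinatorially, separately over $\ZZ[1/p]$ and over $\FF_p$. Then identify the reduced components with known spaces. Finally use tangent space computations from (\ref{basicseq}) and Lemma~\ref{ratcomp} to control the scheme structure. Throughout, $p>3$, so Picard invariance is automatic by Remark~\ref{nopicrem}.

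\textbf{Step 1: geometric points.} Let $(E,\Gamma_0)$ be a geometric point. Then $\Gamma_0$ acts transitively on the leaves, and there are two cases.

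In the first case the leaves lie in the core, so $E$ is semistable. Then $\Gamma_0\subset\brE$ is a subgroup of order $p$ consisting of the marked points, and $E$ is a generalized elliptic curve with a point of exact order $p$ in the Drinfeld sense. In characteristic $p$ this includes the case where $E$ is ordinary with \'etale $p$-torsion.

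In the second case there are genuine leaves, each stabilized by a nontrivial subgroup. Since $\Gamma_0$ is simple, there is a single leaf $Z$ with $p$ marked points, attached to a core $E_0$ which is irreducible (smooth or a $1$-gon). The action on $Z$ fixes the node. In characteristic $p$ it goes through $\GG_a$, and otherwise through $\mu_p$, exactly as in Lemma~\ref{ratGamma0}. On the core, $\Gamma_0$ fixes the node and acts trivially on $\Pic^0$, so the action is trivial. Hence these configurations are parametrized by $j(E_0)\in\PP^1_j$ together with a point of $(\ocM_{0,\oGamma_0})^{\Gamma_0}=\Spec\ZZ[\xi_p]$. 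This gives a closed subscheme isomorphic to $\PP^1_j\times\Spec\ZZ[\xi_p]=S_2$, using that $\ocM_{1,1}$ has coarse space $\PP^1_j$. I would glue via the node-gluing morphism $\ocM_{1,1}\times\ocM_{0,\oGamma_0}\to\ocM_{1,\Gamma_0}$, restricted to fixed loci; because the gluing creates a node, its image is closed.

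\textbf{Step 2: the semistable component.} The semistable locus is open. Its closure is the closure of $Y_1(p)$, and over $\ZZ[1/p]$ it is $X_1(p)$ by the argument of Lemma~\ref{Nlem}. Over $\FF_p$ the possible points are the Igusa points (ordinary, \'etale $\Gamma_0$) together with the degenerate case where all marked points collide in the formal group. In the non-semistable world, that degenerate case is exactly the configuration of Step 1 with all marked points on a leaf, so it lands in $S_2\otimes\FF_p=\PP^1_{j,\FF_p}$.

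I would identify $S_1$ with $\tilcX_1(p)$ by mapping $\tilcX_1(p)\to S$. The map sends the generalized elliptic curve with Drinfeld point $P$ to the stabilized family in which the leaf is created by blowing up where $\Gamma_0=\{iP\}$ collides. Concretely, blow up the universal curve along the closed subscheme cut out by the Cartier divisor $\sum[iP]$ where it is non-\'etale, and then use the stabilization/folding of the appendix. Lemma~\ref{ramlem}, applied in the analogous $\Gamma_0$ setting, gives the modulus of the new node as a $(p-1)$-st root of $p$ up to a unit. That matches the appearance of $\ZZ[\xi_p]$ and shows that the Drinfeld (canonical) component $\PP^1_j$ of multiplicity $p$ in $\tilcX_1(p)\otimes\FF_p$ maps onto $S_2\otimes\FF_p$.

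\textbf{Step 3: no other components, and the intersection.} Both $S_1$ and $S_2$ are two-dimensional. By Step 1 their union exhausts the geometric points, so $S$ is purely two-dimensional once no embedded or extra non-reduced components are present. Their intersection is set-theoretically $\PP^1_{j,\FF_p}$, since horizontal points of $S_2$ have a leaf while those of $S_1$ are semistable.

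\textbf{Main obstacle.} The hardest step is the scheme-theoretic claim: that $S$ has no extra nilpotent directions beyond those visible on $S_1\cup S_2$. I would attack it with the tangent sequence (\ref{basicseq}) at a point of $\PP^1_{j,\FF_p}$, using the following inputs:
\begin{itemize}
\item the leaf contributes $\veps_p=1$ by Lemma~\ref{ratcomp}(i), with $p$ odd;
\item the core contributes one dimension, namely the deformation of $j$;
\item the node contributes at most one dimension.
\end{itemize}
This gives tangent dimension at most $3$ over $\ZZ$, which is exactly what is needed at the intersection of two regular surfaces $S_1$ (away from the cusps, where $\tilcX_1(p)$ is regular) and $S_2$, meeting along a curve. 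I would then conclude that $S=S_1\cup S_2$ scheme-theoretically near such points. The remaining work is to compare the non-reduced Drinfeld component of $\tilcX_1(p)$ with this picture and to check it at the supersingular $j$ and at the cusps.
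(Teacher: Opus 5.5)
Your proposal follows essentially the paper's route. You classify the geometric points over $\FF_p$ and over $\ZZ[1/p]$, identify the semistable closure with $\tilcX_1(p)$ and the leaf locus with $\PP^1_j$ over $\ZZ[\xi_p]$ (via the $\mu_p$-character at the node, as in Lemma~\ref{ratGamma0}), and observe that $X_1(p)$ specializes onto the whole $\FF_p$-fiber while $S_2$ specializes only onto $\PP^1_{j,\FF_p}$.

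The paper stops at this set-theoretic level, so your scheme-theoretic Step 3 is not needed for the statement. In any case, an embedding-dimension bound of $3$ alone would not give $S=S_1\cup S_2$ scheme-theoretically, since it cannot rule out embedded components along $\PP^1_{j,\FF_p}$.
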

\begin{proof}
We start, again, with the geometric points. The $\FF_p$-fiber has two irreducible components -- the Igusa curve $\Ig_p$ parameterizing an ordinary generalized elliptic curve $E$ provided with a level structure $\Gamma_0=E[p](k)$, and the component $\PP^1_{j,\FF_p}$ parameterizing a configuration of a generalized elliptic curve $E_0$ serving as a core and a rational leaf with $\FF_p$ marked points. This components intersect at the supersingular points: the core is supersingular and the marked points sit on a leaf. Thus, on the level of reductions one obtains the $\FF_p$-fiber of the Katz-Mazur model $\tilcX_1(p)$, and hence the whole locus is smoothable.

Now let us consider the situation over $\ZZ[1/p]$. There exist two connected components -- the usual $X_1(p)$ parameterizing $\Gamma_0$-marked generalized elliptic curves, and a component $S_2$ parameterizing an irreducible core $E_j$ with a leaf $Z$ containing marked points. As in the previous lemmas, the marked points can be arranged in a $\mu_p$ configuration, and the character of the action at the node of $Z$ is an invariant of the configuration. Therefore $S_2\otimes\ZZ[1/p]=\PP^1_{j,\ZZ[1/p,\xi_p]}$, and it remains to recall that $X_1(p)$ specializes to the whole $\FF_p$-fiber, while $S_2$ specializes to $\PP^1_{j,\FF_p}$ only.
\end{proof}

This time the moduli space is non-normal, and the ``parasitic'' deformations in the $\PP^1_{j,\FF_p}$ locus extend to the mixed characteristic so that the curve does not smoothen and the $\FF_p$-action on the leaf deforms to a $\mu_p$-action. The Igusa curve is reduced in the fiber by the results of Katz-Mazur, and this matches the computation of tangent spaces in Lemma~\ref{ratcomp}(iii).

\subsubsection{$\Gamma$-marked elliptic curves}
The case of our interest $(\ocM_{1,\Gamma})^\Gamma$ combines all types of the above behaviours -- different components intersect along the $\FF_p$-fiber, and there also is a non-reduced non-smoothable vertical component. We refer to figure (\ref{types2}) for types of non-smoothable configurations.

\begin{lem}\label{ellipticgamma}
Assume that $p>3$. Then the fixed locus $S=(\ocM_{1,\Gamma})^{\Gamma}$ is purely two-dimensional and consists of four clusters of components of different types:
\begin{itemize}
\item[(i)] The smoothable horizontal component $S_1=\cX(p)$.
\item[(ii)] The moduli space of Igusa configurations $S_2=\coprod_{\lam\in\PP^1(\FF_p)} S_{2,\lam}$ consisting of $p+1$ horizontal components $S_{2,\lam}\toisom\tilcX_1(p)\otimes\ZZ[\xi_p]$.
\item[(iii)] The moduli space of canonical configurations $S_3=\coprod_{\lam\in\PP^1(\FF_p)}S_{3,\lam}$ consisting of $p+1$ clusters $S_{3,\lam}\toisom\PP^1_{j}\otimes\ZZ[\xi_p]\otimes\ZZ[\xi_p]$ of $p-1$ horizontal components $\PP^1_{j,\ZZ[\xi_p]}$.
\item[(iv)] A vertical component $S_4$ with reduction $\PP^1_{j,\FF_p}\times\PP^1_{\lam,\FF_p}$. 
\end{itemize}
The components only intersect in the $\FF_p$-fiber, which is the union of $S_4$ and $p+1$ Igusa curves. The reduction of $S_1\otimes\FF_p$ is the union of $p+1$ Igusa curves and the supersingular locus $S_p(j)=0$, the reduction of $S_2\otimes\FF_p$ is the union of $p+1$ Igusa curves and $p+1$ curves $\PP^1_{j,\FF_p}$, and the reduction of $S_3\otimes\FF_p$ is the union of $p+1$ curves $\PP^1_{j,\FF_p}$.
\end{lem}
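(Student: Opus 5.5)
The plan follows the pattern of Lemmas~\ref{ratGamma0}, \ref{ratGamma} and \ref{ellipticgamma_0}. First I determine the geometric points of $S$ via the classifications of \S\ref{classec} and \S\ref{pinvsec}. Then I build the components over $\ZZ[1/p]$ from the characteristic zero configurations of diagram (\ref{types2}). Finally I match their specializations with the $\FF_p$-fiber, using Lemma~\ref{Gammasmoothlem} and Corollary~\ref{comparecor}. Since $p>3$, Remark~\ref{nopicrem} lets me drop Picard invariance, so the classifications apply to all points of $S$.

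Over $\ZZ[1/p]$ there are four types. The semistable and Tate configurations give $X(p)$ by Lemma~\ref{fixedlocus}, and its closure is $S_1=\cX(p)$ by definition.

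The Igusa configurations depend on a choice of cyclic $\Gamma_0\subset\Gamma$, indexed by $\lam\in\PP^1(\FF_p)$. For each such choice the core is a $\Gamma/\Gamma_0$-leveled generalized elliptic curve, i.e. a point of $X_1(p)$. The $p$ leaves each carry a $\mu_p$-configuration, determined by a character $\Gamma_0\toisom\mu_p$ exactly as in Lemma~\ref{ratGamma0}. This yields $X_1(p)\otimes\ZZ[\xi_p]$ for each $\lam$.

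The canonical configurations are determined by the following data:
\begin{itemize}
\item[(a)] $j$ of the irreducible core;
\item[(b)] a character of $\Gamma/\Gamma_0$ at the node of the intermediate component $Z$;
\item[(c)] a character of $\Gamma_0$ at the leaves.
\end{itemize}
As in Lemma~\ref{ratGamma}, this gives $\PP^1_j\otimes\ZZ[\xi_p]\otimes\ZZ[\xi_p]$ for each $\lam$.

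To extend these over $\ZZ$, I use the following identifications:
\begin{itemize}
\item[(1)] The Igusa component is the product, over the relevant base, of the $\Gamma/\Gamma_0$-marked locus of the core and the leaf locus. So its closure is identified with $\tilcX_1(p)\otimes\ZZ[\xi_p]$, using Lemma~\ref{ellipticgamma_0} (component $S_1$ there) and Lemma~\ref{ratGamma0}.
\item[(2)] Similarly, the canonical cluster closure is $\PP^1_j$ times the closures from Lemma~\ref{ratGamma0} applied twice, giving $\PP^1_j\otimes\ZZ[\xi_p]\otimes\ZZ[\xi_p]$.
\end{itemize}

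Next comes the $\FF_p$-fiber, which by \S\ref{classec} consists of the $p+1$ Igusa curves together with the supersingular locus $\PP^1_j\times\PP^1_\lam$; the latter I call $S_4$ set-theoretically. I then read off the reductions from the degenerations in the classification:
\begin{itemize}
\item[(1)] $S_1$ reduces to the Igusa curves together with $S_p(j)=0$, by Lemma~\ref{Gammasmoothlem}.
\item[(2)] $S_{2,\lam}$ reduces to the Igusa curve plus the canonical line $\lam\times\PP^1_j$. This is inherited from the fiber of $\tilcX_1(p)$ in Lemma~\ref{ellipticgamma_0}.
\item[(3)] $S_{3,\lam}$ reduces to the canonical line only, since the leaves and $Z$ stay unsmoothed.
\end{itemize}

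For pure two-dimensionality, the horizontal components are flat closures of relative curves, so they are two-dimensional. For $S_4$, I must check it is a genuine component, i.e. not contained in the closure of any horizontal one. The deep supersingular points with non-supersingular $j$ are not smoothable (Lemma~\ref{Gammasmoothlem}). By Lemma~\ref{ratGamma} they also admit no mixed-characteristic lift of the leaf. Hence the two-dimensional set $\PP^1_j\times\PP^1_\lam$ is an irreducible component of $S$.

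That the components meet only in the $\FF_p$-fiber follows because over $\ZZ[1/p]$ the four types are distinguished by discrete combinatorial data: the dual graph type and the choice of $\Gamma_0$. These data are open and closed there.

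The main obstacle is scheme-theoretic, not set-theoretic. I must justify that the closures of the Igusa and canonical clusters are \emph{exactly} the stated products, rather than something with extra embedded or nonreduced structure. I would handle this via the product decomposition of the deformation problem: the basic sequence (\ref{basicseq}) separates contributions of components and nodes. Combined with the normality/regularity results of Lemmas~\ref{ratGamma0} and \ref{ellipticgamma_0}, this identifies each closure with the corresponding normal product.

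Since the statement only records $S_4$ by its reduction, I avoid describing its nonreduced structure, citing Remark~\ref{dimcountrem}(ii) only for illustration.
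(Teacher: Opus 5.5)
Your outline matches the paper's proof, which is itself only a sketch. It gets $S_1$ and $S_4$ from Lemma~\ref{Gammasmoothlem} and handles the Igusa and canonical clusters ``by the same method as the previous lemmas''. Your set-theoretic steps fill this in correctly: the geometric points, the reductions, and the fact that $\PP^1_j\times\PP^1_\lam$ is a component (the dimension count alone suffices there).

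The weak point is the step you yourself flag as the main obstacle, and the justification you offer for it is wrong: the products are not normal.
\begin{itemize}
\item $\ZZ[\xi_p]\otimes_\ZZ\ZZ[\xi_p]\cong\ZZ[\xi_p][y]/(\Phi_p(y))$ has $p-1$ minimal primes but a single prime over $p$. So $S_{3,\lam}$ is a connected non-normal cluster, exactly as the statement says.
\item $\tilcX_1(p)\otimes\ZZ[\xi_p]$ is non-normal along the canonical line. Near an ordinary point of that line, $\tilcX_1(p)$ is already \'etale locally of the form $A\otimes\ZZ[\xi_p]$ with $A$ smooth over $\ZZ_{(p)}$ (Drinfeld generators of the canonical subgroup, \'etale locally $\mu_p$). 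Hence the base change has $p-1$ local branches there.
\end{itemize}
Moreover, first-order information from (\ref{basicseq}) cannot by itself identify a closure with a given scheme. So the argument ``closure $=$ normal product'' fails as stated.

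What does work is a global version of your product idea.
\begin{itemize}
\item The nodes of type $l_1$ (and, for the canonical cluster, also $l_0$) have modulus zero on a dense open of the closure, hence on all of it. So the closure lies in the closed boundary stratum where these separating nodes persist, and gluing identifies that stratum with a product.
\item The $\Gamma$-fixed part of this stratum is $(\ocM_{1,\Gamma/\Gamma_0})^{\Gamma/\Gamma_0}\times_\ZZ(\ocM_{0,\oGamma_0})^{\Gamma_0}$. This holds because the leaves form the induced curve $\Gamma\times^{\Gamma_0}Z_0$, and for $p>3$ the group $\Gamma_0$ acts trivially on the genus-one part: it fixes all $p$ special points there, and automorphism groups of such stable curves fixing the markings have order prime to $p$.
\item The genus-one factor is the $\Gamma/\Gamma_0$-marked curve obtained by folding the leaves, not the core. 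Over the canonical line it is $E_0\cup Z$.
\item By Lemmas~\ref{ellipticgamma_0} and \ref{ratGamma0}, the relevant closed pieces are $\tilcX_1(p)\times_\ZZ\Spec\ZZ[\xi_p]$ and $\PP^1_{j,\ZZ[\xi_p]}\times_\ZZ\Spec\ZZ[\xi_p]$. Both are $\ZZ$-flat (flat tensor flat), hence equal to the scheme-theoretic closures of their generic fibres.
\end{itemize}
It is flatness, not normality, that yields $S_{2,\lam}\cong\tilcX_1(p)\otimes\ZZ[\xi_p]$ and $S_{3,\lam}\cong\PP^1_j\otimes\ZZ[\xi_p]\otimes\ZZ[\xi_p]$.
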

\begin{proof}
We know from Lemma~\ref{Gammasmoothlem} that there is one vertical (i.e. non-smoothable) irreducible component $S_4$ with reduction $\PP^1_{j,\FF_p}\times\PP^1_{\lam,\FF_p}$, and at least when $p>2$ it is generically non-reduced by Remark~\ref{dimcountrem}(ii). There is also one smoothable component $S_1=\cX(p)$. Other components are horizontal, hence correspond to non-smoothable classification on figure (\ref{types2}): canonical and Igusa configurations. These two cases are classified by the same method as was used in the proof of the previous lemmas, so we will not repeat the details.

The intersection $S_1\cap S_4$ is described by Lemma~\ref{Gammasmoothlem}, and the degenerations of $S_2$ and $S_3$ modulo $p$ are easier to study and we skip the details.
\end{proof}

In particular, the components $S_1$ and $S_4$ intersect only in codimension 2.

\section{A functorial description of $\cX(p)$}\label{4sec}
Our next goal is to introduce the additional conditions which cut out $\cX(p)$ from $(\ocM_{1,\Gamma})^\Gamma$.

\subsection{Contraction onto the core}\label{phisec}

\subsubsection{Drinfeld level structures}
Recall that Drinfeld level-$N$ structure on an elliptic $S$-curve $E$ is a homomorphism $\ell\:\Gamma_S=(\ZZ/N\ZZ)^2_S\to E[N]\into E$ which generates the image as a Cartier divisor, that is, the ideal defining $E[N]$ is the product of the ideals defining $\ell(\gamma)$ for $\gamma\in\Gamma$. The main example to have in mind is of an elliptic curve $\cE$ over a mixed characteristic DVR $(R,m)$. The subscheme $\cE[p]$ is the vanishing locus of the multiplication-by-$p$ polynomial $\phi_p$. The Drinfeld level structure is the decomposition $\phi_p(t)=\prod_{i=1}^{p^2}(t-\gamma_i)$ over the elements $\gamma_i\in\cE[p](R)$. Over $K$ all roots are distinct and it is equivalent to the usual level structure. On the other hand,  $\cE\otimes R/m^a$ has no usual level structure, while its Drinfeld level structure is obtained by reducing the same decomposition modulo $m^a$. In particular, it book keeps the information whether some roots collide modulo $m^a$ or not.

\subsubsection{Compatibility of level structures}
Let $(E,\Gamma_S)$ be a $\Gamma$-marked genus-1 $S$-curve. The contraction onto the core $E\to E_0$ is a $\Gamma$-equivariant morphism which induces a morphism $\Gamma_S\to E_0$. The condition that $\Gamma_S\to E_0$ is a Drinfeld level structure on $E_0$ will be denoted ($\phi_p$). It defines a closed subscheme of $(\ocM_{1,\Gamma})^\Gamma$ that will be denoted $(\ocM_{1,\Gamma})^{\Gamma,\phi_p}$.

\begin{rem}
The notation will be essential in the sequel paper when levels $p^n$ are studied. For level $p$ it just indicates compatibility with the multiplication-by-$p$ morphism $\phi_p\:E_0\to E_0$. Namely, $\Gamma_S\to E_0$ is the pullback of $O_S\into E_0$ under $\phi_p$ in the sense of Cartier divisors (rather than morphisms of schemes).
\end{rem}

Imposing the Drinfeld level condition on $(\ocM_{1,\Gamma})^\Gamma$ one gets rid of non-smoothable components and cuts out the maximal locus in $(\ocM_{1,\Gamma})^{\Gamma}$ which admits a natural morphism to $\tilcX(p)$.

\begin{lem}\label{philem}
(i) Contraction onto the core induces a morphism $(\ocM_{1,\Gamma})^{\Gamma,\phi_p}\to\tilcX(p)$ onto the Katz-Mazur model.

(ii) The closed immersion $\cX(p)\into(\ocM_{1,\Gamma})^{\Gamma,\phi_p}$ is bijective.
\end{lem}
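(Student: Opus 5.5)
For (i) I will use the moduli interpretation of the Katz--Mazur model: $\tilcX(p)$ represents (rigidified) pairs $(E_0,\ell)$, where $E_0$ is a generalized elliptic curve whose degenerate fibers are $p$-gons and $\ell\:\Gamma_S\to E_0^\sm[p]$ is a Drinfeld level structure. Given a $\Gamma$-marked $S$-curve $(E,\Gamma_S)$ in $(\ocM_{1,\Gamma})^{\Gamma,\phi_p}$, the retraction onto the semistable core $E\to E_0$ (see the appendix) is compatible with base change and is $\Gamma$-equivariant. The image of $O_S$ gives the origin section, which makes $E_0$ a generalized elliptic curve.

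I then need three checks.
\begin{itemize}
\item[(a)] The image of $\Gamma_S$ lies in $E_0^\sm$. Nodes of $E$ are disjoint from the marked sections, and the retraction sends each tree hanging off the core to a smooth point.
\item[(b)] The map $\Gamma_S\to E_0$ is a homomorphism for the group law on $E_0^\sm$. Equivariance shows that $\gamma$ acts on $E_0$ through an automorphism carrying $O$ to the image of $\gamma$. Picard invariance, as in the proof of Lemma~\ref{fixedlocus}, then shows that this automorphism is translation by that image. This is checked fibrewise, since the relevant automorphism group is unramified, so the condition is open and closed.
\item[(c)] Condition $(\phi_p)$ says exactly that $\ell$ is a Drinfeld structure.
\end{itemize}
Over the cusps the core is a $p$-gon fibrewise, by the classification in \S\ref{classec} and \S\ref{pinvsec}, and the level structure meets every component, so $E_0$ satisfies the Deligne--Rapoport/Katz--Mazur conditions. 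This gives the morphism to $\tilcX(p)$. It is compatible with $Y(p)$, where the retraction is the identity.

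For (ii), since $\cX(p)$ is closed and contained in $(\ocM_{1,\Gamma})^{\Gamma,\phi_p}$ by Lemma~\ref{tautology}, it suffices to compare geometric points. Over $\ZZ[1/p]$ a Drinfeld structure is an honest isomorphism $\Gamma\toisom E_0[p]$. This forces the image of $\Gamma$ to consist of $p^2$ distinct points of the core. By the list in \S\ref{pinvsec}, only the smooth and Tate semistable configurations survive, and these are exactly $X(p)=\cX(p)\otimes\ZZ[1/p]$ by Lemma~\ref{Nlem}.

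In characteristic $p$ I use the classification of \S\ref{classec} together with Lemma~\ref{Gammasmoothlem}. For the ordinary and Tate types every point is smoothable, so there is nothing to check. For the supersingular types, $(\phi_p)$ over a field says that $\Gamma\to E_0[p]$ is a Drinfeld structure. In the deep and canonical cases all of $\Gamma$ retracts to $O$. The Cartier divisor $p^2[O]$ equals $E_0[p]$ only when $E_0[p]$ is infinitesimal at $O$ of length $p^2$, that is, when $E_0$ is supersingular. If $E_0$ is ordinary, $E_0[p]$ has $p$ distinct geometric points, so the condition fails. By Lemma~\ref{Gammasmoothlem}, supersingularity of the core is precisely the smoothability criterion, so the inclusion is bijective on geometric points.

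The main obstacle I expect is (i)(b) together with the scheme-theoretic properties of the retraction. One must ensure that retraction onto the core exists in families, and that over a nonreduced base the induced map $\Gamma_S\to E_0$ is a group homomorphism rather than merely equivariant. I would handle this by deducing it from equivariance plus Picard invariance, with the origin given by the image of $O_S$. If that is too weak, the fallback is to note that the Katz--Mazur moduli problem only requires the Drinfeld condition on the divisor $\sum[\ell(\gamma)]$, and that the homomorphism property is closed and holds on the dense open $Y(p)$ — though the latter only works on $\cX(p)$, so I would try the first route.
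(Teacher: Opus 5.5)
\textbf{Part (ii).} This is essentially the paper's argument. Over $\ZZ[1/p]$ a Drinfeld structure is \'etale, and that excludes every non-semistable configuration of \S\ref{pinvsec}. In characteristic $p$ the only non-smoothable points are supersingular configurations whose core is not supersingular. There all of $\Gamma$ retracts to $O$ with multiplicity $p^2$, and $p^2[O]\neq E_0[p]$. One small correction: the inclusion $\cX(p)\subseteq(\ocM_{1,\Gamma})^{\Gamma,\phi_p}$ holds because the target is closed and contains $Y(p)$. Lemma~\ref{tautology} is what you need for the converse, namely that a smoothable geometric point lies in $\cX(p)$.

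\textbf{Part (i).} The paper calls this obvious. The reason is that a Drinfeld level structure is by definition a homomorphism whose Cartier divisor is $E_0[p]$, so the homomorphism property is part of $(\phi_p)$ and need not be derived. Your step (b) tries to derive it anyway, and as written it fails.
\begin{itemize}
\item Picard invariance is not among the conditions defining $(\ocM_{1,\Gamma})^{\Gamma,\phi_p}$; it belongs to $(\flat)$.
\item More seriously, the argument breaks down exactly at the cusps. Let $E_0$ be a $p$-gon with coordinates $t_i$ on its components. The automorphisms fixing $O$ include the independent rescalings $t_i\mapsto a_it_i$ with $a_0=1$, a copy of $\GG_m^{p-1}$.
\item These rescalings act trivially on $\Pic^0$ and are not translations. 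They also make $\Aut(E_0,O)$ positive-dimensional, hence not unramified. So neither ``Picard invariance forces a translation'' nor the fibrewise open-and-closed argument applies.
\item For the same reason, on a $p$-gon the origin alone does not determine a Deligne--Rapoport group law, since different laws differ by such rescalings. So ``the image of $O_S$ makes $E_0$ a generalized elliptic curve'' is not a well-defined construction there.
\end{itemize}
The repair is to let the $\Gamma$-action, rather than the origin, fix the group law on a $p$-gon core. Take the law for which $\ell$ is a homomorphism, equivalently the one for which $\Gamma$ acts by translations. It is unique because $\ell(\Gamma)$ meets every component: two such laws differ by a character $\ZZ/p\ZZ\to\mu_p$ of rescalings, and such a character does not change the law. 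Alternatively, simply read the homomorphism requirement into $(\phi_p)$, as the paper does. With either fix, your (a) and (c) give the morphism to $\tilcX(p)$.

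Your fallback is not available: the Katz--Mazur moduli problem does require $\ell$ to be a homomorphism, not just a condition on the divisor.

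Finally, $1$-gon cores do occur in the deep and canonical configurations over $j=\infty$. They are removed by $(\phi_p)$ itself, because there $E_0^\sm[p]=\mu_p$ has rank $p$ rather than $p^2$, not by the classification.
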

\begin{proof}
The first claim is obvious. To prove (ii) we should show that any non-smoothable $\Gamma$-marked $k$-curve $(E,\Gamma_k)$ does not satisfy the Drinfeld level structure condition. If $\cha(k)=p$, then the curve has an ordinary core and a supersingular configuration, see Lemma~\ref{Gammasmoothlem}. Such a configuration induces the map $\Gamma_k\to E_0$ of multiplicity $p^2$, which is not a Drinfeld level structure when $E_0$ is not supersingular. In the same way, if $\cha(k)\neq p$, then Drinfeld level structures are \'etale, but any non-smoothable configuration from diagram (\ref{types2}) induces a map $\Gamma_k\to E_0$ at least of multiplicity $p$.
\end{proof}

\begin{rem}\label{phirem}
Although, ($\phi_p$) determines $\cX(p)$ set-theoretically, this is not so on the level of schemes. For example, if $p>2$ and $(E,\Gamma)$ is ordinary over $k$ of characteristic $p$, then the space of deformations of $\Gamma$-marked curves was described in Remark~\ref{dimcountrem}(i), and there exists a non-smoothable deformation $(\cE,\Gamma_R)$ over $R=k[\veps]/(\veps^2)$ in the direction of the Igusa component over $\ZZ$ which does not smoothen the node. This deformation satisfies $(\phi_p)$ because the maps $\gamma\to\cE(R)\to\cE_0(R)$ depend only on the leaf of $\cE$ to which $\gamma$ is mapped, so $\Gamma\to\cE_0(R)$ has $p$ components of multiplicity $p$ and hence it coincides with the Drinfeld level structure of the ordinary elliptic $R$-curve $\cE_0$.

This can be interpreted as follows: $(\phi_p)$ does eliminate the Igusa component over $\ZZ[1/p]$, but it preserves a ramified infinitesimal part which lives over $\FF_p$.
\end{rem}

\subsection{Scaled vector fields and affine actions}
To eliminate the remaining non-reduced structure on $(\ocM_{1,\Gamma})^{\Gamma,\phi_p}$ we will need to impose one more restriction on the action that will be called affinity.

\subsubsection{Scaled reduction}\label{scaledsec}
Let $S$ be an integral scheme, $E$ a prestable genus-1 $S$-curve with a smooth generic fiber and $Z$ an irreducible component of a fiber $E_s$. Consider a non-zero invariant vector field $\partial_\eta\in\Gamma(T_{E_\eta})$ on the generic fiber. Multiplying $\partial_\eta$ by a non-zero element of $\cO_s$ we can assume that it extends to a vector field $\partial\in T_{E/S,\veps}$, where $\veps$ is the generic point of $Z$. Since the sheaf $T_{E/S}$ is invertible at $\veps$, we have that $\partial=a\partial_Z$, where $a\in\cO_s$ and $\partial_Z\in T_{E/S,\veps}$ is a vector field which is defined in a neighborhood of $\veps$ and does not vanish there. Both $a$ and $\partial_Z$ are unique up to a unit $u\in\cO^\times_s$, and one can view $(a)$ as the vanishing divisor of $\partial$ at $\veps$. We call $\partial_Z$ a {\em scaled reduction} of $\partial$ along $Z$.

\subsubsection{Independence of choices}
We will now use smoothings to extend the above construction to any base $S$.

\begin{lem}\label{scaledder}
Let $E$ be a prestable genus-1 curve over a scheme $S$ and let $Z$ be an irreducible component of a fiber $E_s$ with the generic point $\veps$. Then up to a unit $u\in\cO^\times_S$ there exists a unique way to choose a vector field $\partial_Z\in T_{E/S,\veps}$ so that the choice is compatible with \'etale covers and scaled reductions of generically regular vector fields on smoothings of $E/S$.
\end{lem}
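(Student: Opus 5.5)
The plan is to reduce to the universal situation and read $\partial_Z$ off the structure of the versal deformation. Since the statement is étale local on $S$ and compatible with pullbacks, it suffices to construct $\partial_Z$ on a versal family and then pull it back.

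Concretely, I would work étale locally around $s$ and choose a versal deformation of $E_s$, i.e. a morphism $S\to\ocM_{1,\Gamma'}$ for a suitable stabilizing set of auxiliary sections (locally $E$ can be stabilized by adding sections). Locally this map factors through a smooth chart $U$ of $\ocM_{1,n}$, and $E$ is the pullback of the universal curve $E_U$. The chart $U$ is regular and integral, with a smooth generic fiber. So the construction of \S\ref{scaledsec} applies to $E_U$ and the component $Z_U$ of the corresponding fiber, and produces $\partial_{Z_U}$, unique up to a unit of $\cO_{U,u}$. We then define $\partial_Z$ as its pullback, and it is nonvanishing near $\veps$ because $T_{E/S}$ is the pullback of the invertible sheaf $T_{E_U/U}$ near the generic point of $Z_U$.

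The key step is a precise description of the vanishing divisor $(a)$ on $U$. Locally at $u$ the chart is $\mathrm{Spec}$ of a formal power series ring over $W(k)$ or $\ZZ$, with node parameters $t_1,\dots,t_m$ for the nodes of $E_u$. I would show that for an invariant vector field $\partial_\eta$ (dual to the generator $\omega$ of $f_*\omega_{E/U}$, which is invertible) the divisor of its extension at $\veps$ is a monomial $\prod t_i^{n_i}$. Here $n_i$ is determined combinatorially by the position of $Z$ relative to the core: the $n_i$ count the nodes on the chain from $Z$ to the core $E_0$ that separate $Z$ from the core, weighted by how many times the path crosses them. This is a local computation in the standard node model $xy=t_i$. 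The dualizing form $\omega$ is nonvanishing on the core and has the shape $dx/x=-dy/y$ across each node, so on a tree component $Z$ separated by nodes $P_1,\dots,P_r$ the form $\omega$ restricted to a neighbourhood of $\veps$ equals $(\prod t_{P_j})^{-1}$ times a generator of $\omega_{E/U}$ there. Equivalently $\partial_\eta=\prod t_{P_j}\cdot\partial_Z$. This identifies $a$ up to a unit and shows that it is a monomial in the pulled-back parameters.

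Uniqueness and compatibility then follow. Given any smoothing $S\into S'$ with integral $S'$, étale locally $S'\to U$ factors through the same chart. The scaled reduction on $S'$ is obtained from the extension $\partial'$ of $\partial_\eta$ by dividing by the pullback of $a$. The potential problem is that the divisor of $\partial'$ on $S'$ might not be the pullback of $(a)$: the pulled-back monomial could become more divisible, and then there is the question of whether $\partial'/a$ is still nonvanishing on $Z$. This is where the monomial form is essential. Near $\veps$ the sheaf $T_{E_U/U}$ is generated by $\partial_{Z_U}$, so $\partial'=a\,\partial_{Z_U}$ holds on $S'$ identically, and hence $\partial_{Z_{S'}}=u\,\partial_{Z_U}|_{S'}$, independently of any cancellation. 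Changing the chart or the added sections changes $\partial_{Z_U}$ only by units, since both charts are smooth over a common refinement. Independence of the chosen invariant $\partial_\eta$ holds because it is unique up to $\cO^\times$ of the generic point, and it extends to a generator of $f_*T$ on a genus-1 curve over the regular base.

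I expect the main obstacle to be the local computation of the divisor of $\omega$ at generic points of tree components. In particular, when $Z$ is attached through several nodes of different types, one has to check that the exponents are correct and insensitive to the stack structure. Here I would use the retraction onto the core from the appendix to reduce to a chain, and do the computation node by node.
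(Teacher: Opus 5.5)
Your route is genuinely different from the paper's and works in outline. However, the explicit local formula you call the key step has the wrong sign.

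\textbf{Comparison of routes.} The paper never passes to a versal chart. \'Etale locally it fixes a smoothing $S\into T$ and uses filteredness of smoothings (Lemma~\ref{smoothabilitylem}) to reduce to comparing $T\into T'$. It then observes that the scaled reduction $\partial'$ from $T'$ generates $T_{\cE'/T'}$ at $\veps$, hence also at the generic point of $\cE_\eta$, which generizes $\veps$. So $\partial'$ restricts to an invariant field on $\cE_\eta$ and is already a scaled reduction for $\cE/T$.

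You instead construct $\partial_{Z_U}$ on a regular chart $U$ of $\ocM_{1,n}$ and check that its pullback along $g\colon S'\to U$ is a scaled reduction for every smoothing $S\into S'$. Extending the auxiliary sections to $S'$ \'etale locally plays the role of filteredness. This is fine. What your ``no cancellation'' step really needs is that the factor relating $\partial_\eta$ and $\partial_{Z_U}$ is a unit at $g(\eta')$. That holds because $g(\eta')$ lies over $\cM_{1,n}$, where no node parameter vanishes; you should state this explicitly.

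\textbf{The sign error.} The generator $\omega$ of the Hodge bundle is a section of $\omega_{E_U/U}$. Its restriction to $E_u$ vanishes off the core (Lemma~\ref{geomH0lem}(i)), so it cannot have a pole at $\veps_U$. In the model $xy=t$, with $\omega=h\,dx$ on the core side and leaf coordinate $x/t$, one gets $\omega=t\cdot h\,d(x/t)$. Hence
\[
\omega=\Bigl(\prod t_{P_j}\Bigr)\cdot(\text{generator}),\qquad \partial_\eta=\Bigl(\prod t_{P_j}\Bigr)^{-1}\partial_Z .
\]
This matches Lemma~\ref{scaleder2}, where $\partial_x=\pi^{-1}\partial_t$. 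So the extension of $\partial_\eta$ is $(\prod t_{P_j})\,\partial_\eta$ with $a=1$, and on $S'$ you multiply by $g^*(\prod t_{P_j})$ rather than divide.

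\textbf{Why it does not break the proof.} Your compatibility argument only uses that the factor is a monomial in boundary parameters, so the fix is harmless. In fact the computation can be dropped. Since $\partial_{Z_U}$ generates at $\veps_U$, it generates at the generic point of $E_{U,g(\eta')}$, which generizes $\veps_U$; this is exactly the paper's observation. That already forces the factor to be a unit at $g(\eta')$.

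\textbf{Trade-off.} The paper's argument is intrinsic and very short once filteredness is available. Yours requires versal charts, extension of auxiliary sections, and independence of these choices. In return, after the sign fix it yields the explicit normalization $\partial_Z\sim\pi\,\partial_\eta$, with $\pi$ the total modulus of the chain, essentially recovering Lemma~\ref{scaleder2}.
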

\begin{proof}
By \'etale descent we can work \'etale locally on $S$. In particular, we can assume that a smoothing $S\into T$, $\cE/T$ exists and the scaled reduction provides a candidate for $\partial_Z$. The only thing we need to check is that this construction is independent of choices. Since up to the further \'etale base change the set of smoothings is filtered by Lemma~\ref{smoothabilitylem}, we can assume that $T\into T'$, $\cE\into\cE'$ is a larger smoothing and our task is to prove that the corresponding scaled reductions $\partial_Z$ and $\partial'_Z$ coincide up to a unit.

Let $\eta\in T$ and $\eta'\in T'$ be the generic points. Take any non-vanishing derivation $\partial_{\eta'}$ on the generic fiber of $\cE'$ which extends to $T_{\cE'/T',\veps}$ and factor it as $\partial_{\eta'}=a\partial'$ with $\partial'$ a scaled reduction at $\veps$ induced from $\partial'$. Since $\partial'$ does not vanish at $\veps$ it also does not vanish at the generic point of $\cE_\eta$, which is its generization. Therefore, $\partial'$ restricts to a regular vector field on $\cE_\eta=\cE'_\eta$ and hence it is also a scaled reduction obtained from the smoothing $\cE$.
\end{proof}

\begin{rem}
An important particular case is obtained when $S$ is a {\em fat point}, that is, a scheme whose underlying topological space is a point. In this case, one can view $\partial_Z$ as a meromorphic vector field on $Z$. In fact, $\partial_Z$ is even a regular derivation, as follows from the lemma below, but it is not a logarithmic derivation and does not vanish at finite nodes of $Z$.
\end{rem}

\subsubsection{Scaled vector fields}
Any derivation $\partial_Z$ as in Lemma~\ref{scaledder} will be called a {\em scaled vector field}. Naturally, there is a more intrinsic definition which does not use smoothings, but it will be studied elsewhere. %We postpone it to \S\ref{groupsec}. 
In the sequel we will only need the following simple concrete computation.

\begin{lem}\label{scaleder2}
Let $E$ be a prestable genus-1 curve over a fat point $S=\Spec(R)$, and assume that $Z=\PP^1_S$ is a geometrically irreducible rational component with a coordinate $t$ such that $t=\infty$ defines the node $P$ which separates $Z$ from the core. Also, let $\pi\in R$ denote the total modulus between $Z$ and the core (i.e the product of the moduli of all nodes on the chain between $Z$ and the core). If $\pi=0$, then $\partial_t$ is a scaling derivation on $Z$.
\end{lem}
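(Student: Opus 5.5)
The plan is to compute the scaled reduction directly from a smoothing, using Lemma~\ref{scaledder} to reduce to that situation. Since the statement is étale local and $\partial_Z$ is only defined up to a unit, we may pass to an étale neighbourhood. There we choose a smoothing $S\into T$, $\cE/T$, with integral $T$, as in Lemma~\ref{scaledder}. By that lemma it suffices to produce one generically regular vector field on $\cE_\eta$ whose scaled reduction along $Z$ is a unit multiple of $\partial_t$.

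\textbf{Step 1: local coordinates near $Z$.} Let $Q$ be the point of the core $E_0$ where the tree containing $Z$ is attached; it is a smooth point of the core. Let $\pi'\in\cO(T)$ be the total modulus of the chain of nodes from $Z$ to the core in $\cE$; it restricts to $\pi$ on $S$. Choose a formal coordinate $s$ of $\cE$ along the lift of $Q$ on the core, so that a neighbourhood of the whole tree hanging at $Q$ is, on the generic fibre, a disc in $s$. Iterating the local description of a node, $xy=\pi_i$, along the chain (a composition of blowups of the disc) gives the following. The component $Z$ corresponds to the Gauss point of a subdisc $|s-c|\le|\pi'|$, and, after an affine change $t\mapsto at+b$ with $a$ a unit (harmless, since it only rescales $\partial_t$ by a unit), the coordinate on $Z$ is $t=(s-c)/\pi'$. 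Here $t=\infty$ is the node $P$ towards the core, as in the statement.

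\textbf{Step 2: reduce the invariant field.} The invariant vector field on the smooth generic fibre restricts near the core to a nowhere-vanishing field on the smooth part of the core (whether the core is elliptic or a $1$-gon). So near $Q$ it can be written as $\partial_\eta=u(s)\partial_s$ with $u$ a unit power series. From $\partial_s=\pi'^{-1}\partial_t$ we get
\[
\pi'\partial_\eta=u(c+\pi' t)\,\partial_t .
\]
This extends over the generic point $\veps$ of $Z$. Its restriction to $Z$ is $u(c)\partial_t$ plus terms divisible by $\pi'$. Since $\pi'|_S=\pi=0$, those terms vanish on $S$, and $u(c)$ is a unit. Hence $\pi'\partial_\eta$ does not vanish at $\veps$, so it is a scaled reduction, and its restriction is a unit multiple of $\partial_t$. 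This proves that $\partial_t$ is a scaled vector field on $Z$. Note that $\partial_t$ does not vanish at the finite nodes, consistent with the remark preceding the lemma.

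\textbf{Main obstacle.} The step I expect to need the most care is Step~1: showing over the non-reduced base $T$ that the chain of nodes really yields the relation $s-c=\pi' t$, with $\pi'$ the product of the node moduli up to a unit. This should follow inductively from the local equations $xy=\pi_i$ of consecutive nodes: the coordinate $x$ on each component equals $\pi_i^{-1}$ times the coordinate on the next one, up to units. The deformation-theoretic facts for this are the ones collected in the appendix. The hypothesis $\pi=0$ enters only at the end of Step~2, to kill the higher-order terms $\pi' t\cdot u'(c)+\dots$. Without it the scaled field would be $u(c+\pi t)\partial_t$, which is genuinely non-constant on $Z$.
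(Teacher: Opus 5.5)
Your Step~2 is the same computation as the last line of the paper's proof. Step~1, however, is a genuine gap, and it carries essentially all the content of the lemma.

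Describing $Z$ as the Gauss point of a disc $|s-c|\le|\pi'|$ presupposes a valuation on the base. The smoothings of Lemma~\ref{scaledder} live over an arbitrary integral $T$, and the fat point $S$ need not embed in the spectrum of a valuation ring. For example, $\Spec k[a,b]/(a,b)^2$ does not, since ideals of a quotient of a valuation ring are totally ordered. So that picture does not directly apply. (Also, there is no ``lift of $Q$'' in $\cE$ itself; you first have to fold $\cE$ onto its core.)

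The inductive argument you propose instead does not produce $s-c=\pi't$ either. The equations $x_iy_i=\pi_i$ hold only \'etale locally at the nodes. The functions $x_i,y_i$ agree with coordinates on the adjacent components only up to unit \emph{functions} near the node, not up to constants. Chaining them gives at best $s-c=\pi' t w$ with $w$ a unit near $P$. Hence $\pi'\partial_\eta|_Z=f\partial_t$ with $f$ invertible near $P$ but not known to be constant on $Z$. To get $f\in R^\times$ you would need $\pi'\partial_\eta|_Z$ to be regular along all of $Z$, and that is not available a priori. In the paper this regularity is deduced from the lemma, not used to prove it.

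The missing idea is to use the independence of the smoothing in Lemma~\ref{scaledder} and choose one in which $t$ is an honest function. The paper does this in two steps.

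First, it stabilizes $\cE$ onto $Z$: add a section through $Z$ and fold, as in \S\ref{stabcompsec}. This is an isomorphism near $\cE_\eta$ and near the generic point of $Z$, so it does not change scaled vector fields. By Lemma~\ref{stablem}(iv) the chain becomes a single node whose modulus is the total modulus $\pi$.

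Second, since $\pi=0$ in $R$, $E$ is now just $E_0$ glued to $\PP^1_S$ along $O_S=\infty$. So one may re-choose the smoothing as the blowup of a generically smooth lift $(\cE_0,O_T)$ along $(x,\pi)$. Here $x$ is a local equation of $O_T$ and $\pi\in A$ is any nonzero element vanishing in $R$. On the $\pi$-chart, $t=x/\pi$ is a genuine coordinate on all of $Z\setminus P$. Writing the invariant field as $(1+ux)\partial_x$ near $O_s$ gives $\pi(1+ux)\partial_x=(1+\pi ut)\partial_t$, whose restriction to $Z$ is $\partial_t$.

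Note that the hypothesis $\pi=0$ is used twice in this route: once to guarantee that the blowup is a smoothing of $E$ at all, and once in the final restriction.
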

\begin{proof}
The claim can be checked \'etale locally, so we can fix a smoothing $S\into T=\Spec(A)$, $\cE/T$, where $E=\cE\times_TS$ and $A$ is a local domain. The stabilization of $\cE$ onto $Z$ (see \S\ref{stabcompsec}) keeps the generic fiber $\cE_\eta$ unchanged, and contracts the closed fiber onto the curve consisting of two components: the crown $Z$ and the core component $E_0$ closest to $Z$. The retraction is an isomorphism over an open subscheme containing the generic fiber $\cE_\eta$ and the generic point of $Z$, hence it does not modify scaled vector fields on $Z$. In addition, the retraction does not modify the total modulus $\pi$ by Lemma~\ref{stablem}(iv), so we can replace $\cE$ with the stabilization and assume from now on that $E$ is obtained from an irreducible generalized elliptic curve $(E_0,O_S)$ and a leaf $Z$ by identifying $O_S$ with the infinity of $Z$.

The simplest way to proceed is to re-choose the smoothing. First, lift $E_0$ to a generically smooth generalized elliptic curve $(\cE_0,O_T)$ and choose a non-zero $\pi\in A$ whose image vanishes in $R$. Then consider the blowup $\cE\to\cE_0$ along the ideal generated by $\pi$ and the ideal $\cI$ defining $O_T$ in $\cE_0$. This does not modify the generic fiber and inserts a new component $Z$ over the closed point $s$. More concretely, if $x\in\cO_{\cE_0,O_s}$ is a generator of $\cI$, then the blow up is along $(x,\pi)$ and the $\pi$-chart contains a new leaf $Z$ with the coordinate $t=x/\pi$. In particular, the modulus of the created node is $\pi$, and hence $\cE$ is a smoothing of $E$.

Finally, choose an invariant derivation $\partial$ on $\cE_0$. After normalization we can assume that locally at $O_s$ it is of the form $\partial_x+ux\partial_x$ for some $u\in\cO_{\cE_0,O_s}$. Therefore, the scaled restriction onto $Z$ is $(\partial_t+\pi ut\partial_t)|_Z=\partial_t$.
\end{proof}

We will not need the following example but include it to illustrate how the scaled derivation starts to deform when the total modulus does not vanish.

\begin{exam}
Assume that in the situation of the above lemma $\pi^2=0$ and the configuration is ordinary or Tate. Then the formal completion of $\cE_0$ along $\cP_0$ is the formal multiplicative group with coordinate $x$, hence the invariant vector field on the formal completion is $\partial_x-x\partial_x$ and the scaled restriction onto $Z$ (which only depends on the formal completion of $\cE_0$) equals $\pi^{-1}(\pi\partial_t-\pi^2t\partial_t)=\partial_t-\pi t\partial_t$.
\end{exam}

\subsubsection{Affine actions}
Let $S$ be a scheme and $E$ a prestable genus-1 $S$-curve provided with an action of a group $G$. We say that the action is {\em affine} if it induces a trivial action on $\Pic^0_{E/S}$ and preserves scaled vector fields in the following sense: the inclusion $\Stab_G(Z,\partial_Z)\subseteq\Stab_G(Z)$ is an isomorphism for any irreducible component $Z\subseteq E_s$ in a fiber of $E$. In other words, if $g\in G$ takes $Z$ to itself, then it also preserves the scaled derivation of $Z$.

\begin{rem}\label{affautrem}
We have chosen this somewhat clumsy working definition because a unique maximal subgroup of $\Aut_S(E)$ which acts affinely does not exist in general, though it does exist when the crown is connected. This is more relevant for the study of folding elliptic curves and will be worked out elsewhere.
\end{rem}

\begin{exam}\label{affexam}
Assume that $S=\Spec(R)$ is a fat point, $Z$ is a leaf with a coordinate $t$ such that the node $P$ of $Z$ is given by $t=\infty$, and the modulus of $P$ is 0. Set $\GG=\Aut(\PP^1,\infty)=\GG_a\rtimes\GG_m$, then $\GG(R)$ embeds into $\Aut_S(E)$ via its action on $(Z,P)$. By Lemma~\ref{scaleder2} any affine action on $E$ preserves $\partial_t$ on $Z$, and hence it acts on $Z$ through $\GG_a(R)$.
\end{exam}

\subsubsection{Affinity condition}
In the sequel by $(\ocM_{1,\Gamma})^{\Gamma,\flat}$ we denote the closed subscheme of $(\ocM_{1,\Gamma})^{\Gamma}$ defined by the condition that the action of $\Gamma$ is affine. We abbreviate this condition by ($\flat$).

\begin{rem}\label{flatrem}
(i) By Lemma~\ref{fixedlocus} $X(N)$ is the semistable Picard invariant component of $(\ocM_{1,\Gamma})^\Gamma\otimes\ZZ[1/N]$. We saw in Example~\ref{compexam} that there exist other components, and even classified them for $N=p>3$ in \S\ref{pinvsec}. All additional components correspond to non-affine $\Gamma$-actions -- not Picard invariant in the semistable case or through $\mu_N\subset\GG_m$ on a leaf, and hence $(\ocM_{1,\Gamma})^{\Gamma,\flat}\otimes\ZZ[1/N]=X(N)$. In particular, the semistability condition of Deligne-Rapoport imposed in the definition of generalized elliptic curves can be replaced by ($\flat$).

(ii) On the other hand, the affinity condition does not remove non-smoothable supersingular components from $\cX(p)$ because the action on the deep supersingular configurations is always affine, even when the core is ordinary. Thus on the level of sets $\cX(p)\otimes\FF_p\subsetneq(\ocM_{1,\Gamma})^{\Gamma,\flat}\otimes\FF_p=(\ocM_{1,\Gamma})^\Gamma\otimes\FF_p$.
\end{rem}

\subsection{Main results}
It remains to combine all definitions and computations we have already made together.

\subsubsection{The functor}\label{funsec}
First, we define $(\ocM_{1,\Gamma})^{\Gamma,\flat,\phi_p}$ to be the intersection of $(\ocM_{1,\Gamma})^{\Gamma,\flat}$ and $(\ocM_{1,\Gamma})^{\Gamma,\phi_p}$. This is the stack which classifies $\Gamma$-marked genus-1 curves satisfying both ($\phi_p$) and ($\flat$). We will prove that it coincides with $\cX(p)$.

\subsubsection{Deformations}
The key computational ingredient in the proof of our main results is to bound the tangent spaces of $(\ocM_{1,\Gamma})^{\Gamma,\flat,\phi_p}$.

\begin{lem}\label{mainlem}
Let $k$ be a field of characteristic $p$ and let $(E,\Gamma_k)$ be a $k$-point of $M=(\ocM_{1,\Gamma})^{\Gamma,\flat,\phi_p}$. Then the dimension $d$ of the tangent space $T_{(E,\Gamma_k)}(M)$ is at most two.
\end{lem}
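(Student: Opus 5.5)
We bound the absolute tangent space (over $\ZZ$), i.e. the Zariski tangent space $T_{(E,\Gamma_k)}(M)$ over $k[\veps]/(\veps^2)$ together with the one extra dimension that may come from the direction of $p$. Since the absolute tangent space is at most one dimension larger than the tangent space of the fiber $M\otimes\FF_p$, it suffices to show that the relative tangent space over $\FF_p$ is at most one-dimensional. In the deep supersingular case we also have to account for the $\lam$-direction, and there we expect the relative space to be exactly one-dimensional (the $\lam$-direction), with the node contribution killed by $(\phi_p)$.

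The main tool is the basic exact sequence (\ref{basicseq}). By Lemma~\ref{philem}(ii) the point $(E,\Gamma_k)$ is smoothable, so we run through the four types of \S\ref{classec} with a supersingular core in the supersingular cases. The first term is $\oplus_i T_{(Z_i,D_i)}(\ocM_{g_i,n_i})^{\Gamma_i/\Gamma_{i-1}}$, and we control it componentwise.

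\textbf{Rational components.} For a rational component $Z_i$ with $i<d$, Lemma~\ref{ratcomp}(i),(ii) gives a contribution of dimension $\veps_p$ (cyclic stabilizer) or $1+\veps_p$ (deep leaf). The $\veps_p$-part consists exactly of the ``multiplicative'' deformations of the $\GG_a$-action, the direction of the $\mu_p$-deformation seen in Lemma~\ref{ratGamma0}. Along an infinitesimal deformation with vanishing modulus, Example~\ref{affexam} (via Lemma~\ref{scaleder2}) forces the action on a leaf to stay inside $\GG_a$. We will check that, more generally, ($\flat$) kills the $\veps_p$-part on every rational component: in the deformation $h\,dt^{\otimes2}$ dual picture, the invariant class $dt^{\otimes 2}/(t^p-t)$ pairs with the infinitesimal change of the $\GG_m$-component of the action, which must vanish since $\partial_t$ is preserved. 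Thus rational leaves contribute $0$ (cyclic case) or $1$ (deep case, the $\lam$-direction).

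\textbf{Core.} The core contributes at most $1$: it is the deformation of $j(E_0)$ with level structure. In the ordinary case this is Lemma~\ref{ratcomp}(iii). In the supersingular cases the core is a supersingular elliptic curve $E_0$ with $\Gamma$ acting trivially on it. Here $(\phi_p)$ is used: by Lemma~\ref{philem}(i) the deformation maps to the tangent space of $\tilcX(p)$ at a supersingular point, and moving $j$ while keeping $\Gamma\to E_0$ Drinfeld and concentrated at $O$ is impossible over $\FF_p$ (the supersingular points are isolated), so the core term vanishes in the canonical and deep cases.

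\textbf{Nodes.} Each node type contributes at most one dimension, and the combined node contribution is cut down by $(\phi_p)$ together with the ramification relation of Lemma~\ref{ramlem}. Over $\FF_p$ the Drinfeld condition on the core should force the relation $\pi_0^{p^2-1}\pi_1^{p-1}=0$ to first order, which is automatic. So we instead need to show that independent smoothing of $P_1$ and $P_2$ in the canonical case gives at most two dimensions in total, with nothing else.

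\textbf{Tally.} Putting the pieces together:
\begin{itemize}
\item[(a)] ordinary or Tate: core $\le1$, node $\le1$ (or core only in the Tate case, whose $p$-gon nodes are one orbit);
\item[(b)] canonical: nodes $P_1,P_2$, total $\le2$;
\item[(c)] deep: leaf $1$ plus node $P_1$, total $\le 2$.
\end{itemize}
In each case the relative tangent space is $\le2$, and the absolute space is then bounded using the fact that the node directions already account for the $p$-direction: by Example~\ref{partialexam}, $p$ maps into the span of the node moduli.

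\textbf{Main obstacle.} The step we expect to be hardest is making precise that $(\flat)$ kills exactly the $\veps_p$-parts of Lemma~\ref{ratcomp} on non-leaf rational components, where the modulus may be nonzero and the scaled field deforms as in the Example after Lemma~\ref{scaleder2}. We would handle this by the explicit $\pi^2=0$ computation.
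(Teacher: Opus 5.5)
\textbf{Verdict.} Your case tally, completed as explained in the second paragraph, is in substance the paper's proof and suffices for the lemma. The paper bounds $\dim T_{(E,\Gamma_k)}(M)\le d_1+d_2$, where $d_2$ is the number of $\Gamma$-orbits of nodes and $d_1$ is the dimension of the no-smoothing term $K$ of (\ref{basicseq}) cut down by $(\flat)$ and $(\phi_p)$. The values of $(d_1,d_2)$ are $(1,1)$, $(0,2)$, $(0,2)$, $(1,1)$ in the ordinary, Tate, canonical and deep cases.

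\textbf{The framing is wrong.} In the paper's convention $T_{(E,\Gamma_k)}(M)$ is the tangent space over $\ZZ$, i.e.\ the tangent space of $M\otimes\FF_p$ (deformations over $k[\veps]/(\veps^2)$). The absolute space $\tilT$ is the subject of Lemma~\ref{mainlem2}. Your opening reduction to a relative bound $\le 1$ aims at a false statement: whenever the case is ramified, $T=\tilT$ is two-dimensional. In the canonical case the smoothings of $P_1$ and $P_2$ are independent first-order directions over $\FF_p$, which your own tally allows. In the deep case the node smoothing is \emph{not} killed by $(\phi_p)$: the regular parameters are $\pi_0,\lam_x$, and $p$ is a unit times $\pi_0^{p^2-1}\in m_x^2$.

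Your closing step is also invalid. Example~\ref{partialexam} only produces a smoothing direction in $\ker\partial$ and says nothing about whether $p\in m_x^2$. For $p=2$ with an ordinary or Tate configuration, your count gives only $\dim\tilT\le 3$. The paper needs separate arguments for the absolute space: a non-affine equivariant lift over $\ZZ/4\ZZ$ in that case, and an Eisenstein argument excluding lifts over $\ZZ/p^2\ZZ$ in the ramified case. Since the lemma concerns only $T$, simply drop both the reduction and the closing step.

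\textbf{The ``main obstacle'' does not exist.} The conditions $(\flat)$ and $(\phi_p)$ only have to be tested on deformations lying in $K$, where every node has modulus zero; the node-smoothing directions are merely counted, never cut down. Lemma~\ref{scaleder2} applies to any rational component whose total modulus to the core vanishes, not only to leaves. So it gives the scaled field $\partial_t$ on the intermediate component $Z_1$ exactly as on the leaves. Affinity then forces the action through $\GG_a(R)$, and rescaling $t$ shows two things: components with cyclic stabilizer do not deform, and the deep leaf moves only in $\lam$. No pairing with $dt^{\otimes 2}/(t^p-t)$ and no $\pi^2=0$ computation is needed.

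\textbf{Two smaller points.}
\begin{itemize}
\item In the Tate case the count should read: nothing from components, plus two node orbits. The $p$-gon components are lines with three special points on which the stabilizer acts trivially, so they do not deform.
\item For the supersingular core, the operative fact is that in $K$ the node at $O$ has modulus zero. Hence all sections retract to $O_R$, and $(\phi_p)$ says that the $p$-torsion of $\cE_0$ is $p^2\cdot O_R$, i.e.\ $\cE_0$ stays supersingular over $R$. To conclude that $j$ does not move to first order you need $S_p$ to be separable, i.e.\ the supersingular locus is reduced; that the supersingular points are isolated is not enough. The map to $\tilcX(p)$ from Lemma~\ref{philem}(i) is not what does the work here.
\end{itemize}
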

\begin{proof}
Recall that $T=T_{(E,\Gamma_k)}((\ocM_{1,\Gamma})^{\Gamma})$ classifies deformations of a $\Gamma$-marked curve $(E,\Gamma_k)$ over $R=k[\veps]/(\veps^2)$, and we should compute the subspace of deformations that also satisfy conditions ($\flat$) and ($\phi_p$): the action is affine and the retraction of $\Gamma_k$ onto the core is a Drinfeld level structure. Also recall that by the exact sequence (\ref{basicseq}), $T$ contains the subspace $$K=\oplus_{i=0}^d T_{(Z_i,D_i)}(\ocM_{g_i,n_i})^{\Gamma_i/\Gamma_{i-1}}$$ of deformations which do not smoothen the nodes, and $\dim(T/K)$ is bounded by the number $d_2$ of types of nodes. Therefore $d\le d_1+d_2$, where $d_1$ is the dimension of the space $K^{\flat,\phi_p}=T_{(E,\Gamma_k)}(M)\cap K$ which classifies deformations as a $\Gamma$-marked curve which do not smoothen the nodes and satisfy conditions ($\flat$) and ($\phi_p$). We should prove that $d_1\le 1$ and it even vanishes in the Tate and canonical cases.

The affinity condition is componentwise, hence $K^\flat=T_{(E,\Gamma_k)}((\ocM_{1,\Gamma})^{\Gamma,\flat})\cap K$ is the direct sum of intersections. By Lemma~\ref{scaleder2}(i) the scaled vector field on rational components is of the form $\partial_t$, so as we observed in Example~\ref{affexam}(i) the action is affine if and only if it is through $\GG_a(R)$. It follows easily that the space is trivial for the action of $\ZZ/p\ZZ$ on $\PP^1$ and is one-dimensional (with the deformation parameter $\lam$) in the case of a faithful action of $\Gamma$ on $\PP^1$. This implies that in the ordinary case $d_1\le 1$. Also, in Tate's case we observe that the core components are just $\PP^1$'s with three nodes and the trivial action, hence they do not deform either, and $d_1=0$ in this case. So, we are already done with these two cases.

It remains to consider the case, when the configuration is supersingular and hence the core is $(E_0,O_k)$, where $E_0$ is a supersingular elliptic curve with the node at the origin $O_k$. We already know that $K^\flat=k^r\oplus T_{(E_0,O_k)}(\ocM_{1,1})$, where $r=0$ in the canonical case and $r=1$ in the deep case. In both cases, the dimension exceeds what we need by one, hence it suffices to prove that $K^{\flat,\phi_p}\subsetneq K^\flat$. Naturally, we will see that ($\phi_p$) forbids to deform the supersingular core, and hence kills the one-dimensional term $T_{(E_0,O_k)}(\ocM_{1,1})$ . Indeed, if $(\cE,\Gamma_R)\in K^{\flat,\phi_p}$ is a deformation of $(E,\Gamma_k)$, then the image of a section $\gamma_R\to\cE$ in the core $\cE_0$ coincides with $O_R$ because the node at $O_R$ is of modulus zero, and hence $\Gamma_R\to E_0$ is the zero section taken with multiplicity $p^2$. This implies that $\cE_0(R)$ has no non-trivial $p$-torsion, that is, $\cE_0$ is the trivial deformation $E_0\otimes_k R$ (or $j(\cE_0)=j(E_0)$ is a zero of $S_p$), as claimed.
\end{proof}

\subsubsection{Absolute deformations}
Since we have to study schemes which are not smooth over $\ZZ$, we should next refine Lemma~\ref{mainlem} to the absolute setting. We will see that the dimensions of the absolute tangent spaces are independent of cases, and this will also allow us as a by product to figure out what the precise dimensions of the relative tangent spaces are.

For a point $x$ on a scheme $M$ let $\tilT_x(M)$ denote the absolute tangent space, which is the dual of the absolute cotangent space $\tilT^*_x(M)=m_x/m_x^2$. If $p=\cha(k(x))$, then the cotangent space $T^*_x(M)=m_x/(p,m^2_x)$ over $\ZZ$ is the quotient of $\tilT^*_x(M)$ by the image of $p$, so we have an embedding $T_x(M)\into\tilT_x(M)$ whose cokernel is of dimension at most one and is non-zero if and only if there exists a non-trivial deformation over $\ZZ/p^2\ZZ$ -- the unramified case.

\begin{lem}\label{mainlem2}
Let $k$ be a field of characteristic $p$ and let $(E,\Gamma_k)$ be a $k$-point of $M=(\ocM_{1,\Gamma})^{\Gamma,\flat,\phi_p}$. Then the absolute tangent space $\tilT_{(E,\Gamma_k)}(M)$ is two-dimensional, and the embedding $T_{(E,\Gamma_k)}(M)\into\tilT_{(E,\Gamma_k)}(M)$ is an isomorphism if and only if the case is ramified: either $p>2$ or the configuration is supersingular.
\end{lem}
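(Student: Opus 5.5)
Throughout, write $x=(E,\Gamma_k)$. The plan is to squeeze $\dim\tilT_x(M)$ between a lower bound from geometry and an upper bound from Lemma~\ref{mainlem}.

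\textbf{Lower bound.} By Lemma~\ref{philem}(ii) the closed immersion $\cX(p)\into M$ is bijective, so $x$ lies on $\cX(p)$. Since $\cX(p)\into M$ is a closed immersion, $\tilT_x(\cX(p))\subseteq\tilT_x(M)$. Now $\cX(p)$ is the closure of $Y(p)$, which is flat over $\ZZ$ and two-dimensional, so $\dim\cO_{\cX(p),x}=2$. Hence $\dim\tilT_x(\cX(p))\ge2$, and therefore $\dim\tilT_x(M)\ge2$.

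\textbf{Upper bound.} There is an exact sequence $0\to T_x(M)\to\tilT_x(M)\to C\to0$ with $\dim C\le1$, and $C\ne0$ exactly when there is a nontrivial first-order deformation over $\ZZ/p^2$ not killed by $p$. Lemma~\ref{mainlem} gives $\dim T_x(M)\le2$, so $\dim\tilT_x(M)\le3$ so far. We therefore need to show $\dim T_x(M)+\dim C=2$, which splits into two cases.

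\emph{Ramified case} ($p>2$ or supersingular configuration). Take a smoothing of $x$ over a mixed characteristic DVR $R$; it exists because $x\in\cX(p)$. By Lemma~\ref{ramlem}, $p$ lies in the square of the maximal ideal along it, since $p$ is a product of the node moduli with exponents $p^2-1$ and $p-1$, each at least $2$ in the relevant cases. The aim is to show that $p\in m_x^2$ in $\cO_{M,x}$, which gives $C=0$ and hence $\tilT_x(M)=T_x(M)$ of dimension at most $2$. Done directly this is the main obstacle. The approach I would try first is to use that $M\to\tilcX(p)$ exists by Lemma~\ref{philem}(i), and that $M$ locally surjects on moduli. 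Concretely, the moduli $\pi_0,\pi_1$ of the node types are elements of $m_x$, since nodes are versal coordinates in $\ocM_{1,\Gamma}$. I would then show that the identity $(p)=(\pi_0^{p^2-1}\pi_1^{p-1})$ holds in $\cO_{M,x}$ up to a unit. This would follow by comparing with the Drinfeld level equation pulled back from $\tilcX(p)$, where $p$ is the product of the $t(\gamma)$, each divisible by some modulus, via the computation of Lemma~\ref{ramlem} applied to the universal family with the formal group of the core. Then $p\in m_x^2$.

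\emph{Unramified case} ($p=2$, ordinary or Tate). Here Lemma~\ref{ramlem} gives $(2)=(\pi_1)$, so $p\notin m_x^2$ on $\cX(p)$ and $C\ne0$. The node modulus coincides with $p$ up to a unit, so $T_x(M)$ receives no independent node-smoothing direction. Combined with $d_1\le1$ from the proof of Lemma~\ref{mainlem} (and $d_1=0$ in Tate's case), this gives $\dim T_x(M)\le1$ and hence $\dim\tilT_x(M)\le2$. The needed claim is that $\pi_1/2$ is a unit on $M$. For this I would again use the relation pulled back via the morphism to $\tilcX(p)$, where the base is étale over the ordinary Igusa locus up to the node parameter.

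Combining the two bounds gives $\dim\tilT_x(M)=2$. Since $C=0$ exactly in the ramified case, the embedding $T_x(M)\into\tilT_x(M)$ is an isomorphism precisely then.
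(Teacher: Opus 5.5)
Your lower bound is fine, and your ramified case is in substance the paper's argument. Near the origin of the core, $(\phi_p)$ gives $[p](t)=w(t)\prod_\gamma(t-t(\gamma))$ with $w$ a unit, where $\gamma$ runs over the $p^h$ elements of $\Gamma$ whose images specialize to the origin at $x$. Hence every $t(\gamma)\in m_x$, and comparing linear coefficients gives $p=\pm w(0)\prod_{\gamma\neq0}t(\gamma)\in m_x^{p^h-1}\subseteq m_x^2$ whenever $p^h-1\ge 2$. The paper says the same thing as ``$[p](t)/t$ is Eisenstein, so there is no lift over $\ZZ/p^2\ZZ$''. The moduli $\pi_0,\pi_1$ play no role here. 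The two-sided identity $(p)=(\pi_0^{p^2-1}\pi_1^{p-1})$ in $\cO_{M,x}$ that you set out to prove is neither justified nor needed.

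The genuine gap is the unramified case ($p=2$, ordinary or Tate). Your bound there rests on $\pi_1\in 2\cO_{M,x}$, since this is what kills the node-smoothing direction in $T_x(M)$. The relation you invoke only yields the opposite inclusion: ``$2$ is a unit times $t(\gamma_1)$, and $t(\gamma_1)$ is divisible by $\pi_1$'' gives $2\in(\pi_1)$. To get $\pi_1\in(2)$ you need the exact equality $t(\gamma_1)=\pi_1\cdot(\mathrm{unit})$ over the possibly non-reduced ring $\cO_{M,x}$. That requires an actual local description of the contraction of the leaf onto the core. For instance, pull back from a versal chart of $\ocM_{1,\Gamma}$, where a neighbourhood of the leaf is the blowup of the core along $(t-a,\pi_1)$. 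A point with leaf coordinate $s$ then goes to $t=a+\pi_1 s$, and normalizing $t(0)=0$ gives $t(\gamma_1)=\pi_1(s(\gamma_1)-s(0))$ with $s(\gamma_1)-s(0)$ a unit. Combine this with $2=-w(0)\,t(\gamma_1)$, read off from $[2](t)=w(t)\,t\,(t-t(\gamma_1))$.

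Your stated justification, that the base is \'etale over the Igusa locus ``up to the node parameter'', is not an argument for this. It presupposes the local structure of $M$ you are trying to determine. With the computation above supplied your route works, and it shows that $(\flat)$ is not needed at these points. As written, though, the decisive step is missing.

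The paper avoids this step by using $(\flat)$. For $p=2$ the leaf contributes nothing by Lemma~\ref{ratcomp}(i), so $\dim\tilT_x((\ocM_{1,\Gamma})^\Gamma)\le3$. An explicit $\ZZ/4\ZZ$-deformation towards the Igusa component with a non-affine action then gives a vector of this space outside $\tilT_x(M)$.

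Finally, for the ``only if'' part you need $2\notin m_x^2$ in $\cO_{M,x}$. Lemma~\ref{ramlem} only concerns DVR smoothings and does not give this by itself, so you must exhibit a smoothing over an unramified DVR such as $W(k)$.
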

\begin{proof}
First we note that $M$ is a closed subscheme of $\ocM_{1,\Gamma}$ which contains $Y(p)$ because the two conditions defining $M$ hold on $Y(p)$ tautologically; in fact, they were designed as natural extensions of these tautological properties to the locus of non-smooth genus-1 curves. Thus, $\cX(p)$ is a closed subscheme of $M$ and the closed immersions $\cX(p)\into M\into (\ocM_{1,\Gamma})^{\Gamma,\phi_p}$ are bijective by Lemma~\ref{philem}(ii). In particular, $M$ is purely two-dimensional, and hence we should only prove that $\dim(\tilT_{x}(M))\le 2$, where $x=(E,\Gamma_k)$. We will use that $\dim(T_{x}(M))\le 2$ by Lemma~\ref{mainlem}.

First, consider the ramified case: either $p>2$ or the configuration is supersingular. We claim that in this case $T_{x}(M)=\tilT_{x}(M)$. It suffices to show that there are no deformations over $R=\ZZ/p^2\ZZ$. So, assume that $(\cE,\Gamma_R)$ is a $\Gamma$-marked deformation and let us show that the condition ($\phi_p$) is not satisfied. In fact, we claim that the generalized elliptic $R$-curve $\cE_0$ does not possess a Drinfeld level structure. Indeed, the connected component of $\cE_0[p]$ is given by the vanishing of a polynomial $\hatphi_p$ which annihilates the $p$-torsion in the corresponding formal group and is of degree $p^h$, where $h\in\{1,2\}$ is the height of the formal group, and $h=2$ if and only if $E_0$ is supersingular. If a Drinfeld level structure exists, $\hatphi_p(t)$ splits to a linear factors in $R[t]$. But $\hatphi_p(t)/t$ is irreducible because $\hatphi_p(t)$ equals $t^{p^h}$ modulo $p$ and equals $pt$ module $t^2$ (just lift $\hatphi_p$ with a factorization to $\ZZ[x]$ and use Eisenstein's criterion).

Assume now that $x$ is unramified: $p=2$ and the configuration is either Tate or ordinary. In this case, even the tangent space $T_x(M')$ to $M'=(\ocM_{1,\Gamma})^\Gamma$ is at most two-dimensional because the leaf does not contribute to the tangent space by Lemma~\ref{ratcomp}(i). Therefore, $\dim(\tilT_x(M'))\le 3$ and it suffices to prove that the embedding $\tilT_x(M)\into\tilT_x(M')$ is not an equality. We will do this by providing a $\Gamma$-equivariant deformation of $(E,\Gamma_k)$, with a non-affine action.

At this stage we have two possibilities. There exists such a deformation over $\FF_2$, but it smoothens the nodes and is more difficult to describe. So, let us use the advantage of working with the absolute tangent spaces and construct a deformation $(\cE,\Gamma_R)$ over $R=\ZZ/4\ZZ$ which does not smoothen the nodes and points in the direction of the Igusa component of diagram (\ref{types2}). Lift $E_0$ to a generalized elliptic $R$-curve $\cE_0$ and paste it with two leaves of modulus 0 along the $R$-sections $0_R,\tau_R\in\cE_0[2](R)$, where $\tau_R$ is an element which lifts the generator of $E_0[2](k)$. The action of $\Gamma=(\ZZ/2\ZZ)^2$ is lifted as follows: $\Gamma_0=\ZZ/2\ZZ$ acts on the leaves through an involution $\sigma\in \GG_a\rtimes\GG_m(R)$ which reduces to $t\mapsto t+1$ modulo 2 (e.g. $\sigma(t)=-t+1$) and is trivial on the core, and $\Gamma/\Gamma_0$ acts on the core $\cE_0$ via $\tau_R$. We claim that the action is not affine. By Lemma~\ref{scaleder2}(i) and Example~\ref{affexam}(i), any affine action on a leaf $\cZ$ should be through the group $\GG_a(R)=\ZZ/4\ZZ$. So, the generator of $\Gamma_0$ must act as $t\mapsto t+2$, but then the restriction to the underlying leaf $Z$ of $E$ would be trivial. A contradiction.
\end{proof}

%For completeness we include an example of a ``strange'' deformation in characteristic 2, which was mentioned in the above proof.

%\begin{exam}
%Let $R=\FF_2[\veps]/(\veps^2)$ and $\cE$ is glued from a $1$-gon $\cE_0$ with coordinate $t$ and a leaf $\cZ$ with coordinate $x$ so that $t(1+x)=\veps$. In particular, the node is of modulus $\veps$ and this is a deformation of the Tate configuration $E/\FF_2$ which smoothens the node. The action on $E$ is
%\end{exam}

\subsubsection{Local parameters}\label{paramsec}
Our description of deformations of $M=(\ocM_{1,\Gamma})^{\Gamma,\flat,\phi_p}$ can be made completely explicit and uniform for all primes by providing a natural family of parameters at two-dimensional regular rings $\cO_{M,x}$ with $x\in M\otimes\FF_p$ corresponding to a $\Gamma$-marked $k$-curve $(E,\Gamma_k)$. We will do this in terms of coordinates on components of $M\otimes\FF_p$ and smoothing parameters (or moduli) of each type of the nodes of $E$. The arguments are the same and we skip them: one inspects the proofs of Lemmas~\ref{mainlem} and \ref{mainlem2} and checks that the two parameters $s,t\in m_x$ are indeed linearly independent modulo $m_x^2$.

\begin{itemize}
\item[(o)] In all cases the $\FF_p$-fiber is given by the condition $(p)=(\pi_0^{p^2-1}\pi_1^{p-1})$ by Lemma~\ref{ramlem}.
\item[(i)] The canonical case. This configuration contains nodes of types $l_0,l_1$ and the smoothing parameters $s=\pi_0,t=\pi_1$ are regular parameters.
\item[(ii)] The Tate case. This configuration contains nodes of types $l_1,l$ and the smoothing parameters $s=\pi_1,t=\pi$ are regular parameters.
\item[(iii)] The ordinary case. Regular coordinates at $x$ are $\pi_1,j_x$, where $j_x$ is a lifting to $\cO_{M,x}$ of the regular parameter $j-j(E)\in\cO_{\Ig_p,x}$ of the Igusa curve through $x$.
\item[(iv)] The deep supersingular case. Regular coordinates are $\pi_0,\lam_x$, where $\lam_x$ is a lifting of the coordinate $\lam-\lam(E,\Gamma_k)$ on the $\PP^1_\lam$-component through $x$.
\end{itemize}

The situation when $p$ is invertible is the classical one: $M\otimes\ZZ[1/p]=X(p)$ and the parameter at $x\in M_\QQ$ is $j-j(E)$ in the smooth case and $\pi$ in the Tate case, while for $x\in M\otimes\FF_l$ with $l\neq p$ the second parameter is just $l$.

\subsubsection{Main theorem}
It remains to summarize our results about $\cX(p)$ and its comparison to the model of Katz-Mazur as follows.

\begin{theor}\label{mainth}
Let $p$ be a prime number and let $\cX(p)$ be the closure of $X(p)$ in $\ocM_{1,\Gamma}$, where $\Gamma=(\ZZ/p\ZZ)^2$. Then:

(i) The identity of $Y(p)$ extends to the isomorphism $\cX(p)=(\ocM_{1,\Gamma})^{\Gamma,\flat,\phi_p}$.

(ii) The scheme $\cX(p)$ is regular and $D(p)=\cX(p)\setminus Y(p)$ is an snc divisor whose irreducible components are as follows: $p+1$ horizontal Tate divisors at the cusps, $p+1$ vertical Igusa divisors and the supersingular vertical $\PP^1_\lam$ components.

(iii) The contraction of a $\Gamma$-marked curve onto its core gives rise to a generalized elliptic curve with Drinfeld level structure, and the induced morphism $\cX(p)\to\tilcX(p)$ is the blowup of Katz-Mazur model $\tilcX(p)$ at the union of the supersingular points.
\end{theor}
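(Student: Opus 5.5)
The plan is to derive everything from the tangent space computations of Lemmas~\ref{mainlem} and \ref{mainlem2} together with the set-theoretic comparisons of Lemma~\ref{philem} and Corollary~\ref{comparecor}. Put $M=(\ocM_{1,\Gamma})^{\Gamma,\flat,\phi_p}$. As observed in the proof of Lemma~\ref{mainlem2}, $Y(p)\subseteq M$, so $\cX(p)\into M$ is a closed immersion. By Lemma~\ref{philem}(ii) it is bijective, and $M$ is purely two-dimensional.

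For (i) I will first show that $M$ is regular. Over $\ZZ[1/p]$ this follows from Remark~\ref{flatrem}(i): there $M$ coincides with $X(p)$, which is regular by Deligne--Rapoport. At a point $x$ of characteristic $p$, Lemma~\ref{mainlem2} says the absolute tangent space has dimension $2=\dim\cO_{M,x}$, so $\cO_{M,x}$ is regular. A regular scheme is reduced, and on each connected component it is integral. Hence $M$ is reduced, and since $\cX(p)$ is a closed subscheme with the same underlying set, $\cX(p)=M$.

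For (ii), regularity is now established, and I would use the explicit parameters of \S\ref{paramsec} to see the normal crossings. In the canonical case the local equation of $D(p)$ at $x$ is $\pi_0\pi_1=0$, since $(p)=(\pi_0^{p^2-1}\pi_1^{p-1})$ and there is no cusp nearby. In the Tate case it is $\pi_1\pi=0$, where $\pi$ cuts out the horizontal Tate cusp divisor and $\pi_1$ the Igusa component. In the ordinary and deep cases it is $\pi_1=0$ and $\pi_0=0$ respectively. Since these are parts of regular systems of parameters, $D(p)$ is snc. The components are then read off from the classification of \S\ref{classec}: the $\pi_1$-locus is the union of the $p+1$ Igusa curves, the $\pi_0$-locus is the union of the supersingular $\PP^1_\lam$'s, and the cusps give the Tate divisors. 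Away from $p$ everything is classical. The one point to check is that each of these divisors is regular, that is, that no component self-intersects. This should follow from the transitivity of $\GL_2(\FF_p)$ and from the explicit intersection picture in (\ref{p-fiber}).

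For (iii), Lemma~\ref{philem}(i) gives a morphism $f\colon\cX(p)\to\tilcX(p)$ which is an isomorphism over $\ZZ[1/p]$. Away from the supersingular points, $f$ is bijective onto a regular target. Both schemes are regular and two-dimensional, and $f$ is proper and birational, so $f$ is quasi-finite there and hence an isomorphism by Zariski's main theorem. Over a supersingular point $z_j$ the fiber is the curve $\PP^1_\lam$. The preimage of $z_j$ is the Cartier divisor $\pi_0=0$, and the key local claim is that $m_{z_j}\cO_{\cX(p)}$ is invertible. I plan to prove this by checking that the pullbacks of the Katz--Mazur parameters at $z_j$ generate $(\pi_0)$ locally. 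The universal property of blowing up then gives a proper birational morphism $\cX(p)\to\Bl_{S_p}\tilcX(p)$ between regular surfaces. By Corollary~\ref{comparecor} this morphism is bijective on points, so it is quasi-finite, hence finite, and therefore an isomorphism by normality.

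I expect the main obstacle to be this local invertibility claim. My approach would be to take the Katz--Mazur regular parameters $x_1,x_2$ at $z_j$, which are the coordinates of a pair of generators of the Drinfeld structure in the formal group. Since the core retraction sends the marked points $\gamma_1,\gamma_2$ to points of $\cE_0$ whose coordinates are $t$-values of size $|\pi_0|$ (as computed in the proof of Lemma~\ref{ramlem}), their pullbacks should be $\pi_0$ times a unit. In the canonical case one of these pullbacks is $\pi_0\pi_1$ times a unit and the other is $\pi_0$ times a unit, so the ideal generated is $(\pi_0)$ in every case.
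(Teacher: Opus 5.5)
Your arguments for (i) and (ii) are the paper's. Regularity of $M$ follows from the two-dimensional absolute tangent spaces of Lemma~\ref{mainlem2}, so $M$ is reduced and equals $\cX(p)$, and the snc property is read off from the node moduli of \S\ref{paramsec}. Your worry about self-intersecting components is settled by the local picture itself. Wherever $D(p)$ has two local branches, they are of different kinds: contracted versus non-contracted by the core map, or vertical versus horizontal. So no transitivity argument is needed.

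For (iii) you take a different route. The paper argues globally. The core map $f\colon\cX(p)\to\tilcX(p)$ is a proper birational morphism of regular two-dimensional schemes, hence a composition of blowups at closed points. Since it contracts exactly one irreducible curve $\PP^1_\lambda$ over each supersingular point and nothing else, it must be a single blowup at each of them. This needs no local computation but relies on the factorization theorem for regular surfaces.

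You instead verify the universal property of the blowup by showing $m_{z_j}\cO_{\cX(p)}=(\pi_0)$ through the Katz--Mazur parameters $x(P),x(Q)$, and this works. In the factorial regular local rings of $\cX(p)$, the divisor of $x(\gamma)$ for $\gamma\neq0$ is supported on $V(\pi_0)\cup V(\pi_1)$. Take the valuations from the proof of Lemma~\ref{ramlem} at the generic points of the $\PP^1_\lambda$ and Igusa components, where $\pi_0$, resp.\ $\pi_1$, is a uniformizer. They give $x(\gamma)=u\pi_0\pi_1$ for $\gamma$ in the canonical subgroup at a canonical point, and $x(\gamma)=u\pi_0$ otherwise, with $u$ a unit. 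This is consistent with $p=u\prod_{\gamma\neq0}x(\gamma)$ and $(p)=(\pi_0^{p^2-1}\pi_1^{p-1})$. Your route avoids the factorization theorem and tells you explicitly how the Katz--Mazur parameters pull back.

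One step should be tightened. Corollary~\ref{comparecor} only asserts an abstract bijection of geometric points. It does not say that the induced $g\colon\cX(p)\to\mathrm{Bl}_{S_p}\tilcX(p)$ is bijective. Quasi-finiteness of $g$ follows instead from two facts. First, $g$ is proper and birational, hence surjective. Second, the preimage of the exceptional curve over $z_j$ is contained in the irreducible curve $f^{-1}(z_j)=\PP^1_\lambda$. So $g$ maps $\PP^1_\lambda$ onto that curve with finite fibers, and finite plus birational onto a normal target gives the isomorphism.
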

\begin{proof}
We already observed that the claim holds over $\ZZ[1/p]$. Thus, we will work only along the $\FF_p$-fiber. Recall that $\cX(p)$ is a dense closed subscheme of $M=(\ocM_{1,\Gamma})^{\Gamma,\flat,\phi_p}$ by Lemma~\ref{philem}(ii), and the absolute tangent spaces to $M$ are two-dimensional by Lemma~\ref{mainlem2}. Therefore, $M$ is regular and hence coincides with its reduction $\cX(p)$. In addition, $D(p)$ is locally given by the vanishing of the product of parameters corresponding to the types of nodes, hence it is snc.

It remains to prove (iii). Note that $\cX(p)\to\tilcX(p)$ is a morphism between regular proper $\ZZ$-schemes, which restricts to the identity on $X(p)$ and contracts the supersingular $\PP^1_\lam$ components. Of course, no Igusa component is contracted. Since any birational modification of regular surfaces is a composition of blowups at closed points, $\cX(p)$ is the blowing up of $\tilcX(p)$ at the union of all supersingular points over $\FF_p$.
\end{proof}

\begin{rem}
In the same way (but much faster) one can check that the affinity condition precisely removes the non-smoothable component (without leaving an embedded component along the intersection) from the moduli space $(\ocM_{1,\Gamma_0})^{\Gamma_0}$ of $\Gamma_0$-marked elliptic curves, and hence $(\ocM_{1,\Gamma_0})^{\Gamma_0,\flat}=\cX_1(p)$ coincides with the model of Katz-Mazur. In this case, $\Gamma_0$-stabilization of the universal curve does not require to modify the moduli space.
\end{rem}

\subsection{Stable reduction}
The results of this subsection are very special for level $p$. They do not hold for $\cX(p^n)$ with $n\ge 2$, and finding a stable reduction for the latter schemes is a much more delicate problem, which was studied in the beginning of 2000s by Wewers, Weinstien and others, and was finally solved by Weinstein in \cite{Wei}.

\begin{theor}\label{stableth}
Let $p$ be a prime number and set $D(p)=\cX(p)\setminus Y(p)$. Then

(i) $(\cX(p),D(p))\to(\Spec(\ZZ),\Spec(\FF_p))$ is a log smooth morphism of log regular schemes.

(ii) A finite extension $K/\QQ(\mu_p)$ with ring of integers $R=\cO_K$ gives rise to a stable reduction of $\cX(p)$ if and only if all primes over $(1-\xi_p)$ have ramification index divisible by $p+1$. In this case the stable model is the normalization $\cX(p)_R$ of $\cX(p)\otimes_{\ZZ[\xi_p]}R$.

(iii) Take $R=\cO_K$ of degree $p+1$ over $\ZZ[\xi_p]$ and totally ramified over $1-\xi_p$. For example, one can take $R=\ZZ[\xi_p,\pi]/(\pi^{p+1}-1-\xi_p)$. Then the morphism $\cX(p)_R\to\cX(p)$ is totally ramified over the Igusa curves in the $\FF_p$-fiber, while each supersingular component $Z=\PP^1_{\lam,\FF_p}$ is covered in $\cX(p)_R\otimes\FF_p$ by the Drinfeld's curve $Z'$ given by $t^{p+1}=\lam^p-\lam$.
\end{theor}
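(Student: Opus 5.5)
The plan is to make everything explicit in the local parameters of \S\ref{paramsec} and then to apply standard toric (Kato) arguments. The first step is to refine the description of the completed local rings at points $x\in\cX(p)\otimes\FF_p$. By Theorem~\ref{mainth} and Lemma~\ref{ramlem}, each such ring is $W(k)\llbracket s,t\rrbracket/(s^at^b-pu)$ with $u$ a unit. The exponents are $(a,b)=(p^2-1,p-1)$ in the canonical case, $(p^2-1,0)$ in the deep case, and $(0,p-1)$ in the ordinary and Tate cases. Here $s=\pi_0$, $t=\pi_1$, or the corresponding substitutes listed in \S\ref{paramsec}.

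For (i) I would apply Kato's criterion to the chart $\NN\to\NN^2$, $1\mapsto(a,b)$. This map is injective, and the torsion of its cokernel $\ZZ^2/\ZZ(a,b)$ is cyclic of order $\gcd(a,b)$, which divides $p-1$ and is therefore invertible at $x$. Because this order is prime to $p$, one can extract a $\gcd(a,b)$-th root of $u$ \'etale locally and absorb it into $s$ or $t$. After doing so, $\ZZ[\NN^2]\otimes_{\ZZ[\NN]}\ZZ$ gives an \'etale chart, so the morphism is log smooth. Log regularity of $(\cX(p),D(p))$ follows from the regularity and snc statement of Theorem~\ref{mainth}(ii), and that of the base is obvious.

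For (ii) I would first pass to $\ZZ[\xi_p]$. The Weil pairing makes $\cX(p)$ a $\ZZ[\xi_p]$-scheme, and $p=\varpi^{p-1}\cdot$unit with $\varpi=1-\xi_p$. Extracting $(p-1)$-th roots therefore turns the local equation into $s^{p+1}t=u'\varpi$, where $s^{p+1}t$ is to be read with the convention that the absent factors are $1$ in the ordinary, Tate and deep cases. Consequently the Igusa components have multiplicity $1$ in the fiber over $\varpi$, and the supersingular components have multiplicity $p+1$. For $R$ with ramification index $e$ over $\varpi$, the normalization of $R[s,t]/(s^{p+1}t-u'\pi^e)$ is a toric computation. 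Its special fiber is reduced, with only nodes $xy=\pi^m$, exactly when $(p+1)\mid e$; otherwise the strict transforms of the supersingular components remain non-reduced. In the divisible case I would check that the resulting semistable model is stable: no component is a rational curve meeting the rest in fewer than three points. This uses the genus of the Drinfeld curves from (iii) and of the Igusa curves, and small $p$ has to be watched here. For the converse I would argue that the stable model, if it exists over $R$, is dominated by the toroidal model. Equivalently, the skeleton of $(\cX(p),D(p))$ contains the vertices corresponding to the supersingular components, and these become vertices of the minimal skeleton only at $\frac{1}{p+1}$-lattice points. Hence their reduced appearance forces $(p+1)\mid e$.

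For (iii), the Igusa statement is immediate, since $\varpi$ is a uniformizer along those components and $\pi^{p+1}=\varpi$ up to a unit. Along a supersingular component $Z$, the local ring at its generic point is a DVR in which $\varpi=s^{p+1}w$ with $w$ a unit. In the normalized base change one adjoins $(\pi/s)^{p+1}=w$, so $Z'$ is the cyclic $\mu_{p+1}$-cover of $Z$ given by $\tau^{p+1}=w|_Z$. The main obstacle is to identify $w|_Z$ up to $(p+1)$-th powers and constants with $\lam^p-\lam$. I would first compute its divisor from the canonical-case equations $s^{p+1}t=u'\varpi$ at the $p+1$ intersection points $\lam\in\PP^1(\FF_p)$ with the Igusa curves. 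At each of these points $t$ vanishes to order one, which determines the divisor of $w|_Z$ modulo $p+1$ and should give simple zeros at $\FF_p$ and the compensating pole at $\infty$. Failing a clean divisor argument, I would instead compute directly on the deep locus via the formal group, using that $\lam$ is the ratio of torsion coordinates and that $\prod(\lam-c)$ over $c\in\FF_p$ equals $\lam^p-\lam$.
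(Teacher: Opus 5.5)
Your argument for (i) and your multiplicity count in (ii) are the paper's argument, with details the paper leaves out. The paper also reads log smoothness off $p=u\pi_0^{p^2-1}\pi_1^{p-1}$ with exponents prime to $p$. It then notes that over $\ZZ[\xi_p]$ the Igusa components have multiplicity $1$ and the supersingular ones multiplicity $p+1$. Your factoriality argument for extracting $(p-1)$-th roots is fine, and your explicit toric normalization replaces the paper's appeal to generalities on tame semistable reduction. Two small corrections in (i):
\begin{itemize}
\item At ordinary and deep points only one component of $D(p)$ passes through $x$. The chart there is $\NN\to\NN$, $1\mapsto p-1$ resp.\ $1\mapsto p^2-1$, and $j_x$ resp.\ $\lambda_x$ is a smooth coordinate, not a log coordinate.
\item In the deep case the torsion of the cokernel has order $p^2-1$. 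This does not divide $p-1$, but it is prime to $p$, which is all you need.
\end{itemize}

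In (iii) your route is genuinely different. The paper shows $Z'\to Z$ is totally ramified over $\PP^1(\FF_p)$ and \'etale elsewhere, and then uses symmetry. The group $\FF_p\subset\mathrm{SL}_2(\FF_p)$ acts on $Z$ by $\lambda\mapsto\lambda+1$, so the cover descends to a degree-$(p+1)$ cover of $Z/\FF_p=\PP^1_{\lambda^p-\lambda}$ branched only over $0$ and $\infty$, which must be Kummer. You instead compute the Kummer class directly. Since $Z'$ is $\tau^{p+1}=w|_Z$, the local equations give $\mathrm{div}(w|_Z)\equiv\sum_{c\in\PP^1(\FF_p)}[c]\equiv\mathrm{div}(\lambda^p-\lambda)$ modulo $p+1$. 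Here a pole of order $p$ at $\infty$ counts as $+1$, and you should state explicitly that $w$ is a unit at deep points. Because $\mathrm{Pic}^0(\PP^1)=0$, this gives $w|_Z=c(\lambda^p-\lambda)h^{p+1}$. Your argument needs no group action and determines the local monodromy exponents outright, which total ramification alone does not. The price is tracking the global choice of the local equation $s$ of $Z$; the paper avoids that bookkeeping by invoking the $\mathrm{SL}_2(\FF_p)$-action and a descent.

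The converse in (ii) and the stability check are not proved in the paper, and both need care.
\begin{itemize}
\item Stability cannot hold uniformly. For $p\le 5$ the generic fibre has genus $0$, and for $p=3$ each Igusa component is rational with one node and one cusp. So in general the normalization is only the semistable (log smooth) model.
\item Your skeleton argument is not valid on the analytic curve: $\cX(p)$ is a DM stack. Along each supersingular component the points have inertia $\mathrm{Aut}(E_0,O)$, which acts on $\pi_0$ through the tangent character $\chi$ and trivially on $\lambda$ and $\pi_1$. The exponent $p+1$ is a multiplicity in \'etale charts; on the coarse curve it becomes $(p+1)/|\chi(\mathrm{Aut}(E_0,O))|$, for instance $(p+1)/2$ for $j\neq 0,1728$.
\item A genus count shows this concretely. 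For $p=7$ the curve $t^8=\lambda^7-\lambda$ has genus $21>g(X(7))=3$. The component of the coarse special fibre is its quotient by $\mu_4$, namely $y^2=\lambda^7-\lambda$.
\end{itemize}
A skeleton computation on $X(p)^{\mathrm{an}}$ therefore only sees the coarser multiplicity and cannot yield $(p+1)\mid e$. The ``only if'', like the identification of $Z'$ with the Drinfeld curve, has to be formulated and proved on \'etale charts of the stack.
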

\begin{proof}
Claims (i) and (ii) follow from generalities about semistable reduction in the tame case. First, (i) holds because locally the morphism is given by $p=u\pi_0^{p^2-1}\pi_1^{p-1}$, where $u$ is a unit, both $\pi_0$ and $\pi_1$ are either units or regular parameters, and the exponents are not divisible by $p$. Then it follows from (i) that the stable reduction is attained over any extension of $\ZZ$ which is ramified over $p$ of order $N$ divisible by $p^2-1$. Recall that $\cX(p)$ (and already $\tilcX(p)$) is defined over $\ZZ[\xi_p]$, which is totally ramified over $p$ of order $p-1$. Hence, the situation over $(1-\xi_p)$ is as follows: $\cX(p)$ is smooth along the Igusa locus and has fibers of multiplicity $p+1$ along the supersingular locus. Therefore stable reduction is achieved after any additional extension $R/\ZZ[\xi_p]$ which is ramified at any prime over $(1-\xi_p)$ with the order of ramification divisible by $p+1$.

(iii) The total ramification over the Igusa components of $\cX(p)$ is clear, since they are of multiplicity one over $(1-\xi_p)$. Let $Z'$ be the preimage of a component $Z=\PP^1_{\lam,\FF_p}$ in $\cX(p)_R$. The morphism $Z'\to Z$ is totally ramified over the $p+1$ points of $\PP^1(\FF_p)$ which are the intersections with the Igusa components. Also, it is \'etale over the rest of $Z$ because $(\pi_0^{p+1})=(1-\xi_p)$ over any point in the deep supersingular locus.

The group $\GL_2(\FF_p)=\Aut(\Gamma)$ acts on $\cX(p)$ through the action on the group $\Gamma$ of marked points. In fact, it is the classical Galois group of the \'etale cover $X(p)\to X(1)=\PP^1_{j,\ZZ}$. Since $X(p)$ is defined over $\ZZ[\xi_p]$ one obtains a restriction homomorphism $\GL_2(\FF_p)\to\Gal(\QQ(\xi_p)/\QQ)=\FF^\times_p$, which is the determinant map and has $\SL_2(\FF_p)$ as the kernel.

The action on $\cX(p)$ preserves $Z$ and hence the cover $h\:Z'\to Z$ is $\SL_2(\FF_p)$-equivariant. In particular, it is $\FF_p$-equivariant, where $G=\FF_p$ acts by $\lam\mapsto\lam+1$. It follows that $h$ is the pullback of a morphism $h/G\:Z'/G\to Z/G$, where $Z/G$ is the projective line with the coordinate $\lam^p-\lam$ and $h/G$ is of degree $p+1$ and ramifies only over $0,\infty$. Thus, $Z'/G$ is the projective line with the coordinate $t$ given by $t^{p+1}=\lam^p-\lam$, and hence the pullback $Z'\to Z$ is as claimed.
\end{proof}

\appendix

\section{Prestable curves}
Our conventiones are as in \S\ref{convsec}.

\subsection{Geometric curves}
First, let us very briefly summarize some facts about prestable marked curves $(C,D)$ over an algebraically closed field $k$. Our main goal is to fix the terminology, which is not always standard, and choose the presentation line which will naturally extend to general bases. We assume that $2p_a(C)+|D|\ge 2$, so $(C,D)$ possesses a semistabilization, as we recall below.

\subsubsection{Stability conditions}
If $Z$ is an irreducible component of $C$ let $n(Z)$ be the cardinality of the preimage of $C^\sing\cup D$ in the normalization of $Z$. Recall that $\max(0,3-n(Z)-2g_Z)$ is the dimension of the image of the homomorphism of algebraic groups $\Aut(C,D)\to\Aut(Z)$. We say that $Z$ is {\em stable}, {\em semistable} or a {\em leaf} if $2g_Z+n(Z)\ge 3$, $2g_Z+n(Z)\ge 2$ or $2g_Z+n(Z)=1$, respectively. A marked curve $(C,D)$ is called {\em stable} or {\em semistable} if all its components are.

\subsubsection{Semistabilization and the core}
Each leaf $Z$ is a smooth rational curve without marked points and with a single node $P_Z$ such that the other component through $P_Z$ is semistable. Typically, we choose a coordinate $t_Z$ on $Z$ so that the node is at infinity. Contracting all leaves onto their nodes and keeping $D$ unchanged we obtain a retraction $C\to C'$ onto the union of semistable components. Some of these components can get destabilized in $C'$ and iterating the process we obtain the canonical {\em semistabilization} procedure $C=C^0\to C^1\to\dots C^n=C^\infty$, where $(C^\infty,D)$ is the maximal semistable marked subcurve, that will be called the {\em core} of $(C,D)$. It is obtained from $(C,D)$ by contracting all $\PP^1$-trees $T$ without marked points and with a single boundary node $\{P\}=T\cap\overline{C\setminus T}$.

\begin{rem}
The semistabilization is synchronized (or determined) by the {\em layer function} $\ell\:C^\gen\to\NN\cup\{\infty\}$ on the set of generic points: $\eta\in C^{\ell(\eta)}\setminus C^{\ell(\eta)+1}$ if $\eta$ is not in the core, and $\ell(\eta)$ is the minimal number such that $C^{\ell(\eta)}=C^\infty$ otherwise.
\end{rem}

\subsubsection{The crown}\label{layersec0}
The {\em crown} $\brC$ of $(C,D)$ is defined to be the smooth locus of the lowest layer. If $(C,D)$ is semistable, then $\brC=C^\sm$ is the complement to the nodal set, and otherwise $\brC=C^0\setminus C^1=C^0\cap C^\sm$ is the disjoint union of the non-nodal parts of the leaves.

\subsubsection{Stabilization}\label{geomstabsec}
If $(C,D)$ is semistable and $p_a(C)+n\ge 3$, one can further contract all semistable components: any maximal chain of $\PP^1$'s without marked points is contracted to a single node, and any maximal chain of $\PP^1$'s whose first component is marked by a point is contracted to a marked point. Thus, $C\to C^\st$ is birational on stable components and contracts the rest.

\subsubsection{Foldings}
More generally, by a {\em folding} of $(C,D)$ we mean a morphism of prestable curves $(C,D)\to(C',D')$ such that $C\to C'$ is either birational on a component or folds it (i.e. retracts to a point) and the divisor and genus are preserved: $D=D'$ and $p_a(C)=p_a(C')$. It is easy to see that $(C^\st,D)$ is the final folding of $(C,D)$. Other examples are provided by partial semistabilizations $C\to C^i$.

\subsubsection{Logarithmic forms and vector fields}
Let $\omega_{(C,D)}=\omega_C(D)$ denote the dualizing sheaf $\omega_C$ of $C$ twisted by $D$. One usually identifies $\omega_{(C,D)}$ with the sheaf of differential forms on the normalization $\tilC$ with at most simple poles at the preimages of the nodes and marked points so that the sum of residues over each node is zero. Similarly, its dual $\cT_{(C,D)}=\cT_C(-D)$ can be interpreted as the sheaf of vector fields on $\tilC$ with zeros at the nodes and marked points and a similar compatibility condition over the nodes.

\begin{lem}\label{geomH0lem}
Let $(C,D)$ be a prestable $n$-marked curve with $2p_a(C)+n\ge 2$.

(i) Assume that either $d=1$ or $d\ge 1$ and $\brC$ is connected. Then any section of $\omega^{\otimes d}_{(C,D)}$ with $d\ge 1$ vanishes outside of $C^\infty$ yielding the restriction isomorphism $H^0(\omega^{\otimes d}_{(C,D)})=H^0(\omega^{\otimes d}_{(C^\infty,D)})$.

(ii) Any section of $\cT_{(C,D)}$ vanishes outside of $\brC$. If $(C,D)$ is not semistable and $\{Z_i\}_{i\in I}$ is the set of leaves with coordinates $t_i$, then $H^0(\cT{(C,D)})=k^I$ and its basis is formed by the vector fields $\partial_{t_i}$ supported on a single leaf. If $(C,D)$ is semistable, then either $2p_a(C)+n=2$ and then $\cT_{(C,D)}=\cO_C$, or $2p_a(C)+n>2$ and then $H^0(\cT_{(C,D)})=0$.
\end{lem}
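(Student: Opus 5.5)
The plan is to work component by component on the normalization $\tilC$: a section of $\omega^{\otimes d}_{(C,D)}$ (resp. $\cT_{(C,D)}$) is a collection of sections on the components $\tilde Z$ satisfying residue, respectively vanishing, compatibility conditions at the preimages of the nodes. Both parts then follow from degree counts on rational components together with an induction along the semistabilization $C=C^0\to C^1\to\dots\to C^\infty$, peeling off one layer at a time.

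For (i), I would start with a leaf $Z\cong\PP^1$ with node $P_Z$. There $\omega_{(C,D)}|_{\tilde Z}=\Omega_{\PP^1}(P_Z)$ has degree $-1$, so every power $d\ge1$ has negative degree and the section vanishes on $Z$. For $d=1$ the residue at $P_Z$ on the $Z$-side is then zero, so the residue condition forces zero residue on the other branch at $P_Z$. Hence the restriction of the section to $C^1$ is a section of $\omega_{(C^1,D)}$, and iterating along the layers gives vanishing outside $C^\infty$ and the isomorphism. For $d\ge2$ there is no residue condition. Instead the value at the node of the $d$-th power of the log form is matched between branches, and this matching is what carries the vanishing upward. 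The connectivity of $\brC$ should be exactly what guarantees that every non-core component of a higher layer has enough attached lower-layer components whose vanishing propagates, giving degree $<0$ after twisting down. I would carry this out by induction on the layer function, and I expect this bookkeeping for $d\ge2$ to be the main obstacle.

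For (ii), I would use the same vanishing argument for a component $Z$ of layer $\ge1$ outside the core. Such a $Z$ is rational with one node towards the core, and its other special points are nodes to lower layers. A vector field in $\cT_{(C,D)}$ vanishes at all nodes and marked points, so on $Z$ it lies in $T_{\PP^1}(-n(Z))$, which has degree $2-n(Z)<0$ once $n(Z)\ge3$. Components with $n(Z)=2$ (chains) need a separate argument. Here I would use induction from the leaves: the field on a leaf is $a\partial_t$ with $t=\infty$ at the node, which vanishes to order exactly $2$ there. The compatibility over a node says only that the field vanishes on both branches, and I would have to check, working in the local model $xy=0$, that this propagates nothing nonzero upward. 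On the core one uses the fact that $(C^\infty,D)$ is semistable. When $2p_a+n>2$ this gives $H^0=0$, by stability of the stabilization and the degree on genus-$1$ or rational components. When $2p_a+n=2$ (a genus-$1$ curve with no points, or a semistable conic-type configuration) $\cT$ is trivial.

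Finally, on each leaf $H^0(\PP^1,T(-\infty))$ is spanned by $\partial_t$, and such fields glue by zero to the rest of $C$. This gives $H^0=k^I$ with the basis as claimed.
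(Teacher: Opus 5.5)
\textbf{The gap is in (ii).} There $\cT_{(C,D)}$ is the dual of $\omega_{(C,D)}$; that is why it equals $\cO_C$ when $2p_a(C)+n=2$. Near a node $xy=0$ it is generated by $x\partial_x=-y\partial_y$. So the gluing condition is that the coefficients of $x\partial_x$ and $y\partial_y$ on the two branches agree up to sign, not merely that the field vanishes on both branches. With only the condition you state, the claim is false. On a leaf, $H^0(\PP^1,T_{\PP^1}(-\infty))$ is spanned by $\partial_t$ and $t\partial_t$, not by $\partial_t$ alone as your last sentence says. On a rational component with two nodes, $ct\partial_t$ would be unconstrained. Already for an elliptic curve with one leaf attached you would get a two-dimensional space instead of $k$.

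Your assertion that the field on a leaf is $a\partial_t$ is exactly what has to be proved, and it cannot come from an induction starting at the leaves. The propagation must run the other way, as in the paper:
\begin{itemize}
\item Every two-pointed rational component carries $ct\partial_t$, and the log compatibility transports $c$ along chains of such components.
\item In the non-semistable case every such chain reaches a stable component of $C$: either a tree component with at least two children, or the core component to which a tree is attached. The latter is stable in $C$ because it acquires an extra node.
\item On a stable component the field vanishes by degree.
\end{itemize}
This forces $c=0$ along each chain and kills the $t\partial_t$-part on every leaf, leaving $\sum_i c_i\partial_{t_i}$. The semistable case with $2p_a+n>2$ is the same chain argument: some component is stable and every chain reaches one. ``Stability of the stabilization'' is not an argument by itself.

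\textbf{On (i).} The case $d=1$ is exactly the paper's argument. For $d\ge2$ your tool is right: match the leading coefficients of $(dx/x)^{\otimes d}$, then get negative degree after twisting down. But the role of connectivity is the reverse of what you say. A connected crown means a single leaf, so $C\setminus C^\infty$ is one chain $Z_0,\dots,Z_m$ in which every $Z_i$ with $i\ge1$ has exactly two nodes. Then $\omega^{\otimes d}_{(C,D)}$ is trivial on $Z_i$, the section there is $c_i(dt_i/t_i)^{\otimes d}$, and matching gives $c_i=\pm c_{i-1}$ with $c_0=0$.

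A component with $k\ge2$ lower neighbours has twisted degree $d(k-1)-k\ge0$ for $d\ge2$. There the vanishing genuinely fails, for example for an elliptic core carrying a $\PP^1$ with two leaves. So what matters is having only one lower neighbour, not ``enough'' of them.

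Be aware also that for $d\ge2$ the vanishing does not yield the restriction isomorphism; the paper's proof is silent on this too. Matching only lowers the pole of the core-side section at the attachment point from order $d$ to $d-1$. Take $C^\infty=(\PP^1_t,\{0,\infty\})$ with a leaf attached at $t=1$, and $d=2$. The form $dt^{\otimes 2}/(t(t-1))$, extended by zero, is a section of $\omega^{\otimes 2}_{(C,D)}$ that does not lie in $H^0(\omega^{\otimes 2}_{(C^\infty,D)})=k\,(dt/t)^{\otimes 2}$. So for $d\ge2$ only the vanishing statement can be proved.
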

\begin{proof}
(i) Any section of $\omega_{(C,D)}$ vanishes on any leaf $Z$ by the degree considerations and hence also does not have a pole from the other side of the node of $Z$. This allows us to retract the leaves without affecting $H^0(\omega_{(C,D)})$ and the claim follows by induction.

Assume now that the crown is connected and $\phi\in H^0(\omega^{\otimes d}_{(C,D)})$. Then the complement of $C^\infty$ forms a chain $Z_0,Z_1\.Z_m$ of $\PP^1$'s, and by degree considerations $\phi|_{Z_i}=c_i\frac{(dt_i)^{\otimes d}}{t_i^d}$ and $c_0=0$. Using compatibility at the nodes we see that in fact $0=c_0=c_1=\dots =c_m$.

(ii) The claim is true when $C$ is elliptic and $D=\emptyset$, so we exclude this case. Let $\partial$ be a logarithmic vector field and $Z$ an irreducible component. If $Z$ is stable, then $\partial|_Z=0$ by degree considerations and if $Z$ is semistable but not stable, then it is rational with a coordinate $t$ such that $0,\infty$ lie in the union of the nodal and marked sets, and hence $\partial|_Z=ct\partial t$. Furthermore, the value of $c$ propagates along a chain of semistable components, hence a chain with a non-zero $c$ cannot start in the stable locus. It follows that if $\partial|_Z\neq 0$ for a semistable component, then $C$ is either an $m$-gon of $\PP^1$'s or a chain of $\PP^1$'s with one marked point on the tail and on the head. In either case $2p_a(C)+n=2$ and $\omega_{(C,D)}=\cO_C$. It the remaining case $\partial$ vanishes outside of the crown $\brC=\coprod_iZ_i$ and each $\partial|_{Z_i}$ is a vector field which does not propagate to the next component and hence has a double zero at $\infty$. This only leaves the possibility of $\partial=\sum_i c_i\partial_{t_i}$.
\end{proof}

\subsubsection{The genus-1 case}\label{g1sec}
In the paper we work with prestable curves $E$ of genus 1. In such a case the semistable core $E^\infty$ is either a smooth genus-1 curve or a chain of $n$ components $\PP^1_k$ that we call an {\em $n$-gon} and provide with coordinates so that zeros are glued with infinities. This includes the case of a 1-gon, when zero and infinity on the same component are glued to a node. The remaining part of $E$ is a disjoint union of rational trees with a single boundary node on $E^\infty$. We orient it from the leaves to the boundary node and sometimes filter by layers (e.g. in figure (\ref{types3})). Typically, we will reserve the notation $E_0$ for the core and other components will be denoted $Z_i$.

\subsection{Arbitrary base}
Now let us work with a prestable marked curves $(C,D)$ over an arbitrary base $S$ and let $f\:C\to S$ denote the structure morphism. Most of our results about foldings can be deduced by use of Knudsen's stabilization maps, though we suggest a short and self-contained route.

\subsubsection{The set of components}
Let $(C/S)^\gen$ denote the set of generic points of the $S$-fibers of $C$, which we also identify with the set of irreducible components of the $S$-fibers. If $s$ specializes $t$ in $S$, then any irreducible component of $C_s$ has a unique generization in $C_t$, and any component of $C_t$ has at least one specialization in $C_s$.

\subsubsection{The modulus}
If $P$ is a node in $C_s$, then \'etale locally $\cO_P=\cO_s[x,y]/(xy-\pi)$, where $\pi\in\cO_s$ is uniquely determined up to a unit and will be called the {\em modulus} of the node over $s$. A good intuition is provided by the particular case, when $S=\Spec(R)$ for a real valuation ring $R=\cO_s$, and hence the preimage of $P$ in the analytification of $C$ is an open annulus of modulus $\pi$. Moduli are tightly related to the natural log structure on $C$ and provide essentially the same information.

\subsubsection{The crown and the core}\label{crownsec}
We define the {\em crown} and the {\em core} of $(C,D)$ to be the fiberwise unions $\brC(D)=\coprod_{s\in S}\brC_s(D_s)$ and $C^\infty(D)=\coprod_{s\in S}C^\infty_s(D_s)$. The dependency on $D$ will often be omitted from the notation. Using the specialization relation one checks that the crown $\brC$ is an open $S$-scheme subscheme, while the core (so far) is just a constructible subset with a locally closed complement. This set will acquire a reasonable scheme structure as a folding (or contraction) of $C$.

\subsubsection{Smoothability}
Any $n$-pointed prestable curve of genus $g$ is (locally) {\em smoothable} in the following sense:

\begin{lem}\label{smoothabilitylem}
If $(C,D)$ is an $n$-pointed prestable curve of genus $g$ over a base $S$, then there exists an \'etale cover $S'\to S$ and a closed embedding $S'\into T$ such that $T$ is a disjoint union of integral schemes and the base change $(C',D')$ extends to a prestable $n$-pointed $T$-curve with a smooth generic fiber. Moreover, the family of smoothings is filtered.
\end{lem}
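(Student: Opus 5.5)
The plan is to reduce to the universal situation over the moduli stack and use standard deformation theory of prestable curves. The key claim is that smoothings exist \'etale locally and that any two can be dominated by a third.

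\emph{Existence.} Since the statement is \'etale local on $S$, we may assume $S=\Spec(A)$ is affine and that $(C,D)$ comes from a classifying morphism $S\to\mathfrak{M}_{g,n}$. Here $\mathfrak{M}_{g,n}$ is the Artin stack of $n$-pointed prestable curves of genus $g$. It is smooth over $\ZZ$ and locally of finite type, and the smooth locus $\cM_{g,n}^{\sm}$ of smooth curves is open and dense in every fiber. Density follows because every node can be smoothed: the local deformation of $xy=0$ is $xy=\pi$, and the deformations of the nodes are unobstructed, by the standard local-to-global sequence recalled in \S\ref{summary}.

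Choose a smooth atlas $U\to\mathfrak{M}_{g,n}$ by a scheme. After passing to an \'etale cover $S'\to S$, the map $S'\to\mathfrak{M}_{g,n}$ lifts to $U$, because a smooth surjection admits sections \'etale locally. We then take $T$ to be a union of irreducible components of $U$ (or of an open affine piece), and $S'\into T$ to be a closed immersion into an affine space over $T$ if necessary. Concretely, one factors $S'\to U$ as a closed immersion into $U\times\mathbb{A}^m$ and replaces $U$ by the components of $U\times\mathbb{A}^m$ that meet the image. Since $U$ is smooth over $\ZZ$, it is normal, so its connected components are integral. The pullback of the universal curve gives a prestable $T$-curve whose generic fiber is smooth, because the smooth locus is dense in $U$.

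\emph{Filteredness.} Given two smoothings $S'\into T_1$ and $S'\into T_2$, I would form $T_3$, the reduced closure of $S'$ inside $T_1\times_{\mathfrak{M}}T_2$ thickened by a smooth chart. More simply, I would pass to the fiber product of charts over the stack, which is again smooth over each factor. Then I would take integral components containing $S'$; any component through $S'$ dominates both.

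The main obstacle is this last step. One needs the embedding of $S'$ to land in integral components that map dominantly to the $T_i$, so that the generic fibers agree, as used in Lemma~\ref{scaledder}. I would handle this by working with the fiber product $T_1\times_{\mathfrak{M}_{g,n}}T_2$ restricted over the smooth locus. Its smoothness over $T_1$ makes the components flat, hence dominant, possibly after shrinking \'etale locally.
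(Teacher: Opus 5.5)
Your existence argument is sound and takes a different, somewhat heavier route than the paper. You use the smooth Artin stack $\mathfrak{M}_{g,n}$ of prestable curves and lift to a smooth chart. The paper instead stays with Deligne--Mumford stacks: it adds auxiliary sections \'etale locally to make the curve stable, pulls back the universal curve from an \'etale chart of $\ocM_{g,n'}$, and then forgets the extra sections. Both arguments then embed $S'$ into an affine space over the chart; if $S'$ is not of finite type you need infinitely many variables, which is harmless.

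The filteredness part, however, has a genuine gap. The projection $T_1\times_{\mathfrak{M}_{g,n}}T_2\to T_1$ is a base change of $T_2\to\mathfrak{M}_{g,n}$. So it is smooth only when $T_2$ is itself smooth over the stack, i.e.\ essentially a chart. An arbitrary smoothing, such as the blowup smoothing built in Lemma~\ref{scaleder2}, need not be smooth or even flat over $\mathfrak{M}_{g,n}$.

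Worse, what you are aiming for does not exist in general: an integral $T_3\supset S'$ mapping dominantly to both $T_i$, with the curve over $T_3$ pulled back from each. Take $S'$ a boundary point of the fine moduli scheme $\ocM_{0,5}\otimes k$. Take $T_1,T_2$ two curves in it through this point, not contained in the boundary, with distinct generic points. The two classifying maps $T_3\to\ocM_{0,5}$ would have to coincide, so the generic point of $T_3$ would map to both generic points.

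The filteredness the paper needs goes the other way: it is a common \emph{enlargement}, as in Lemma~\ref{scaledder}, where one assumes $T\into T'$ and $\cE\into\cE'$. The missing idea is pinching:
\begin{itemize}
\item glue $T_1$ and $T_2$ along $S'$ to get $T_1\sqcup_{S'}T_2$, and glue the curves along $C'$ (Ferrand's pushout, again a flat prestable family);
\item apply your existence argument to this non-integral family.
\end{itemize}
\'Etale locally this yields an integral $T_3$ containing both $T_1$ and $T_2$ as closed subschemes, compatibly with the curves. In your language, the glued family gives a map $T_1\sqcup_{S'}T_2\to\mathfrak{M}_{g,n}$, which you lift to a chart and embed into affine space over it.
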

\begin{proof}
After passing to a cover $S''\to S$ we can add smooth sections so that the curve becomes $n'$-stable and a map $S''\to\calM_{g,n'}$ arises. Choose an \'etale cover $\coprod_{i=1}^lM_i\to\calM_{g,n'}$ with affine integral $M_i$ and set $S'_i=S''\times_{\calM_{g,n'}}M_i$ and $S'=\coprod_i S'_i$. We pullback the tautological families $S'$ and remove the additional $n'-n$ sections, obtaining a prestable $n$-pointed curve on $M$ with smooth generic fiber, which pulls back to $(C',D')=(C,D)\times_SS'$. By a further localization we can assume that each $S'_i$ is affine, and then the map $S'_i\to M_i$ factors through a closed immersion into an integral scheme $T_i$ and we can set $T=\coprod T_i$.

As for cofinality, if there exists two smoothings, then enlarging the covers of $S$ we can assume that both exist over the same $S'$ say $S'\into T$ and $S'\into T'$ such that $(C',D')$ extends to generically smoooth $(C_T,D_T)$ and $(C_{T'},D_{T'})$. Consider the pinchings $T''=T\coprod_{S'}T'$ and $C_{T''}=C_T\coprod_{C'}C_{T'}$, and similarly for the divisors. Then $(C_{T''},D_{T''})$ is a prestable $n$-pointed curve over $T''$, and we can apply the above result to smoothen it. This induces a smoothing of $(C,D)$ over an \'etale cover $S''$ of $S'$ through which the two smoothings we started with factor (after the base change to $S''$).
\end{proof}

In the same manner, we say that an $S$-automorphism of $(C,D)$ or a group structure is {\em smoothable} if up to an \'etale cover of $S$ it can be lifted to an automorphism or a group structure of a family with a smooth generic fiber.

\subsubsection{Foldings and stabilization}
By a {\em folding} of $(C,D)$ we mean a morphism $(C,D)\to(C',D)$ of prestable $S$-curves which is a fiberwise folding. In the same spirit, a prestable marked $S$-curve $(C,D)$ is {\em semistable} or {\em stable} if all its geometric fibers are. Note that a component $Z$ in a fiber $C_s$ is a leaf, semistable or stable if and only if $\deg(\omega|_Z)$ is negative, zero or positive, respectively. It follows that $(C,D)$ is semistable if and only if $\omega=\omega_{(C,D}$ is semi-ample, and in this case $(C,D)$ possesses a {\em stabilization map} $(C,D)\to(C^\st,D^\st)$, where $$C^\st=\Proj_S\left(\oplus_d f_*\omega_{(C,D)}^{\otimes d}\right)$$ and $D^\st$ is the image of $D$. In fact, this is the relative generalization of the ad hoc stabilization construction from \S\ref{geomstabsec}.

\begin{lem}\label{stablem}
Assume that $(C,D)$ is a semistable marked $S$-curve such that $g+2n\ge 3$, then

(i) The stabilization map $h\:(C,D)\to(C^\st,D^\st)$ is a folding with a stable image.

(ii) Any folding $g\:C\to C'$ which contracts the components folded by $h$ factors through $h$. In particular, $h$ is unique (up to a unique isomorphism).

(iii) The stabilization construction is compatible with any base change $S'\to S$.

(iv) Assume that $s\in S$ is a point and $P$ is a node in the fiber $C^\st_s$. Then the modulus of $P$ equals the product of moduli of all nodes in the preimage of $P$ in $C$.
\end{lem}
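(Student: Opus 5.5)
The plan is to prove the four claims in order, with (i) carrying the real content and (ii)--(iv) following from it and from the construction.

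For (i), I will argue locally on $S$, checking the Proj construction's properties first and then showing it is a folding. Since $(C,D)$ is semistable, $\omega=\omega_{(C,D)}$ has nonnegative degree on every component of every fiber. I will first show that $\omega^{\otimes d}$ is relatively globally generated for $d\ge 3$, hence semi-ample. On geometric fibers this reduces to the standard vanishing $H^1(C_s,\omega^{\otimes d})=0$ for $d\ge 2$, which I will prove by Serre duality: $H^1(\omega^{\otimes d})$ is dual to $H^0(\omega^{\otimes 1-d}(-D))$-type sections, and these vanish on components of positive degree and propagate to zero along chains of semistable components. Cohomology and base change then gives that $f_*\omega^{\otimes d}$ is locally free and commutes with base change, and that $\omega^{\otimes d}$ is generated by global sections. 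The key step is to identify the induced morphism $C\to\PP(f_*\omega^{\otimes d})$ fiberwise with the ad hoc stabilization of \S\ref{geomstabsec}. On a stable component $\omega$ has positive degree, so the map is birational there. On a strictly semistable component $Z$ we have $\deg\omega|_Z=0$, so $\omega|_Z\cong\cO_Z$ and the map is constant on $Z$. Chains of such components are contracted to nodes or to marked points; I will use the local description of contracted chains and the conductor/residue description of $\omega$ recalled before Lemma~\ref{geomH0lem}. This makes $h$ a fiberwise folding with stable image, since $D$ is preserved and the arithmetic genus is preserved because connected contracted loci are rational trees or chains.

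The hypothesis needs care. The statement says $g+2n\ge 3$, but I believe the intended condition is $2g+n\ge 3$, which excludes the elliptic case without marked points and the two-pointed $\PP^1$ chain. I will use it exactly to guarantee that every fiber has at least one stable component, so that the image is not a point.

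For (ii), suppose $g\:C\to C'$ is a folding contracting the components folded by $h$. Then $g$ is constant on every connected component of the fibers of $h$. Since $h$ is proper with $h_*\cO_C=\cO_{C^\st}$, which follows from the Proj description and the normality of the contracted chains, $g$ factors uniquely through $h$ by the rigidity lemma. Applying this to two stabilizations gives uniqueness.

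For (iii), compatibility with base change is immediate from the commutation of $f_*\omega^{\otimes d}$ with base change established in (i), together with the compatibility of $\omega_{(C,D)}$ with base change.

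For (iv), I will work étale locally at $P$ over the local ring $\cO_s$. The preimage of $P$ is a chain of $\PP^1$'s $Z_1,\dots,Z_m$ with nodes $x_{i-1}y_i=\pi_i$ between consecutive components. The function $x=x_0$ on one side and $y=y_{m}$ on the other are pulled back from $C^\st$, and along the chain $x\cdot y=\prod\pi_i\cdot u$ with $u$ a unit, obtained by composing the coordinate changes $y_i x_i^{-1}$-type relations on each $Z_i$. Hence $\cO_{C^\st,P}$ is étale locally $\cO_s[x,y]/(xy-\prod\pi_i)$.

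I expect the main obstacle to be in (i), specifically proving that $h_*\cO_C=\cO_{C^\st}$ and that the image is nodal with this exact local equation. I would first try to handle this by an explicit computation in the universal deformation of a chain, namely the $A_{m}$-singularity resolution picture.
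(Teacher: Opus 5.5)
Your outline is correct. For (i) and (iii) it is the paper's argument made explicit. The paper likewise gets flatness and base change for $C^{\mathrm{st}}=\mathrm{Proj}_S(\oplus_d f_*\omega^{\otimes d})$ from cohomology and base change for the sheaves $f_*\omega^{\otimes d}$, and then checks the folding property fiberwise. Your vanishing of $H^1(C_s,\omega^{\otimes d})$ for $d\ge 2$ is exactly the input this needs. You are also right that the hypothesis should read $2g+n\ge 3$: as printed, $g+2n\ge 3$ admits two-pointed genus-$0$ chains and excludes $g=2$, $n=0$.

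Your route differs from the paper's in (ii) and (iv).

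For (ii), the paper maps $C$ to the relative $\mathrm{Proj}_{C'}(\oplus_d g_*\omega^{\otimes d})$ and compares folded components fiberwise, whereas you use the rigidity lemma. That is a clean, standard argument, but it depends entirely on the Stein property $h_*\mathcal{O}_C=\mathcal{O}_{C^{\mathrm{st}}}$. The reason you give for it (normality of the contracted chains) is not the right one, since chains of $\mathbb{P}^1$'s are not normal. What does give it is that you took $\mathrm{Proj}$ of the full section ring. For affine $S$, $h_*\mathcal{O}_C$ and $\mathcal{O}_{C^{\mathrm{st}}}$ have the same twisted global sections $H^0(C,\omega^{\otimes nd_0})$ for $n\gg 0$, hence coincide by Serre's correspondence; alternatively, it falls out of your local model in (iv). For the uniqueness clause, when you factor one stabilization through another you also need the fibral criterion: a morphism of $S$-flat proper curves that is an isomorphism on fibers is an isomorphism, as in Lemma~\ref{uniquefold}.

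For (iv), the paper passes to a smoothing, reduces to a DVR dominating an integral local base, and uses that moduli of non-archimedean annuli multiply under subdivision. You instead compute directly in the toric model of a deformed chain, where a branch coordinate, rewritten chart by chart, picks up one modulus per node and yields $xy=u\prod\pi_i$. Your route works over an arbitrary base without passing to valuation rings, and it produces the \'etale local equation of $C^{\mathrm{st}}$ at $P$ as a by-product. However, it needs one further input: a neighbourhood of the chain must be \'etale (or formally) locally pulled back from the universal model over $\mathbb{Z}[\pi_0,\dots,\pi_m]$, combined with (iii). Your coordinate-change sentence alone is only heuristic, because the $Z_i$ do not extend to subschemes of $C$ over a non-reduced or higher-dimensional base. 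The paper avoids this input at the cost of Berkovich geometry and a reduction step.

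Finally, fix the indexing. A chain of $m$ contracted components carries $m+1$ nodes in the preimage of $P$. All of them, including the two joining the chain to the surviving branches, must enter the product.
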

\begin{proof}
(iii) It suffices to prove that the sheaves $f_*(\omega^{\otimes d}_{(C,D)})$ are locally free (and hence flat), and this is done by Grauert's theory of direct images, using that $H^1(C_s,\omega_{(C_s,D_s)})$ equals $k$ when $D$ is empty and vanishes otherwise.

(i) Since $C^\st$ is $S$-flat by (iii), we can work fiberwise and then it is easy to see that the map contracts the semistable components and does not modify the stable ones.

(ii) Note that $C\to C''=\Proj_{C'}\left(\oplus_d g_*\omega_{(C,D)}^{\otimes d}\right)$ factors through $h$. Then working fiberwise one can easily check that the morphism $C\to C''$ folds the same components as $g$.

(iv) This can be checked locally, hence we can use the smoothing technique from Lemma~\ref{smoothabilitylem} and reduce to the case of an integral local $(S,s)$. Furthermore, this equality can be checked in a valuation ring dominating $\cO_s$, hence we can replace $S$ by the spectrum of a DVR dominating it. Now we can consider the non-archimedean analytification $\lam\:C^\an\to C_s$ and use that $P\in C_s$ is a node of modulus $\pi$ if and only if $\lam^{-1}(P)$ is an open annulus of modulus $|\pi|$. From this point of view our claim is just a reinterpretation of the fact that the modulus of an annulus is multiplicative with respect to subdivisions of an annulus into a union of annuli (two annuli on the sides are semiclosed and the rest are closed).
\end{proof}

\subsubsection{Uniqueness of foldings}
Using the above result and the standard trick with stabilization by increasing $D$ we can now establish uniqueness of foldings.

\begin{lem}\label{uniquefold}
A folding $h\:(C,D)\to(C',D)$ is determined uniquely by the set $\cZ\subset(C/S)^\gen$ of components folded by $h$.
\end{lem}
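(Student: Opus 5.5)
The plan is to reduce to the stable case with extra markings, where Lemma~\ref{stablem}(ii) already gives uniqueness. Suppose $h_1,h_2\:(C,D)\to(C',D)$ are two foldings which fold the same set $\cZ\subset(C/S)^\gen$. We must produce a unique isomorphism between the targets compatible with the $h_i$. Since the targets are separated and $C\to C'_i$ are surjective with $\cO_{C'_i}=h_{i*}\cO_C$ (fiberwise, foldings contract connected trees and are birational elsewhere), such an isomorphism is unique if it exists. So the question is local on $S$ in the \'etale topology, and by descent we may pass to an \'etale cover.

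\'Etale locally on $S$ I will add extra sections. For each component of a fiber of $C'$ (equivalently, each component of $C$ not in $\cZ$) choose enough sections of $C^\sm$, avoiding $D$ and the nodes, passing through that component, so that the enlarged divisor $D^+=D\cup D''$ makes every non-folded component stable. I would also arrange that no new section meets a component in $\cZ$. Such sections exist \'etale locally because $C^\sm\to S$ is smooth, and after shrinking, the chosen components deform to nearby fibers, so the set of components carrying the sections is open. The images $h_i(D'')$ are then sections of $C'^\sm_i$, because $h_i$ is an isomorphism near them, being birational on those components. Thus $(C'_i,D^+)$ become stable marked curves: every component is the image of a non-folded component with at least its original special points plus the new ones.

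Next I apply Lemma~\ref{stablem} to $(C,D^+)$. First I take the semistabilization/retraction of $(C,D^+)$, which contracts exactly the folded components that are leaves. Then I take the stabilization $h^+\:C\to C^\st$ for $D^+$; its folded components are exactly $\cZ$, since by construction the components outside $\cZ$ are $D^+$-stable and those in $\cZ$ carry no new points and remain unstable after iteration. This step is the main obstacle: $(C,D^+)$ need not be semistable because of leaves in $\cZ$, so I would first retract leaves iteratively, by the analog of the layer construction over $S$, or simply use $\Proj$ of $\bigoplus_d f_*\omega^{\otimes d}_{(C,D^+)}$, which still makes sense once $\omega_{(C,D^+)}$ is shown to be relatively semi-ample after twisting. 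To avoid this, I prefer to argue directly. Each $h_i$ contracts $\cZ$, and $(C'_i,D^+)$ is stable, so $h_i^*\omega_{(C'_i,D^+)}$ agrees with $\omega_{(C,D^+)}$ up to the folded part. Hence $C'_i=\Proj_S\bigoplus_d f_*(h_i^*\omega^{\otimes d}_{(C'_i,D^+)})$, and the graded algebra depends only on $\cZ$ via Lemma~\ref{geomH0lem}(i)-type vanishing on folded trees.

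Finally, with both $h_i$ factoring through the canonical stable model, Lemma~\ref{stablem}(ii) gives $C'_1\cong C'_2$ under $C$, compatible with $D$. The isomorphism is unique, so it descends from the \'etale cover, and it is independent of the auxiliary sections $D''$.
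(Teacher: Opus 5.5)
Your proposal has a genuine gap at the step you yourself flag as ``the main obstacle'': that obstacle is never overcome. Lemma~\ref{stablem}, both the construction of the stabilization and its universal property (ii), is available only for \emph{semistable} pairs. The pair $(C,D^+)$ is not semistable as soon as $\cZ$ contains leaves, precisely because you put no new sections on folded components. So your last paragraph has no ``canonical stable model'' through which both $h_i$ are known to factor, and Lemma~\ref{stablem}(ii) cannot be invoked.

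The ``direct'' substitute does not close the gap. The identity $C'_i=\Proj_S\bigoplus_d f_*(h_i^*\omega^{\otimes d}_{(C'_i,D^+)})$ is harmless (projection formula plus $h_{i*}\cO_C=\cO_{C'_i}$). But the claim that these graded algebras, together with the maps from $C$, coincide for $i=1,2$ \emph{is} the content of the lemma, and your justification is not an argument over a general base:
\begin{itemize}
\item ``$h_i^*\omega$ agrees with $\omega_{(C,D^+)}$ up to the folded part'' presupposes that the folded components form a divisor on $C$. In general they are not Cartier: a leaf whose node has modulus $0$ is cut out by a zero divisor, and a node of modulus $u^2$ over a DVR gives an $A_1$ point.
\item Lemma~\ref{geomH0lem} is a statement on one geometric fiber, and its part (i) even needs $d=1$ or a connected crown. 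It says nothing about whether the sheaves $f_*(h_i^*\omega^{\otimes d})$ on $S$ are independent of $i$.
\item Falling back on the semistabilization $C\to C^\infty(D^+)$ is circular. Corollary~\ref{semistcor} comes from Lemma~\ref{contrlem}, whose proof glues local constructions by the very uniqueness you are proving. Even granting it, nothing would force both $h_i$ to factor through it.
\end{itemize}

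The missing idea, which is how the paper argues, is to add sections on the folded components as well, so that you never stabilize a non-semistable pair.
\begin{enumerate}
\item Locally on $S$, enlarge $D^+$ by sections $P_1,\dots,P_n$ so that $(C,D^+\cup\{P_1,\dots,P_n\})$ is stable.
\item Remove the $P_i$ one at a time. Removing a point from a stable pair leaves a semistable pair, so Lemma~\ref{stablem} applies at each step and gives a canonical tower of stabilizations $C=C_n\to C_{n-1}\to\dots\to C_0$ with $(C_0,D^+)$ stable.
\item Since $D^+$ already stabilizes every non-folded component, only components of $\cZ$ get folded along the tower. Applying Lemma~\ref{stablem}(ii) step by step, any folding of $C$ that folds exactly $\cZ$ factors through $C_0$.
\item The induced folding out of $C_0$ folds no components, so it is an isomorphism on fibers, hence an isomorphism.
\end{enumerate}
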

\begin{proof}
The claim is local on $S$, so we can assume that there exists a large set $D'\supseteq D$ of sections of $C'^\sm$ which stabilize $C'$, that is, $(C',D')$ is stable. Note that $C\to C'$ is an isomorphisms over $C'^\sm\setminus D$ and hence $D'$ uniquely lifts to $C$. By an additional localization we can also stabilize $(C,D')$ by increasing $D'$ further to $D'_n=\{D',P_1\.P_n\}$. Set also $D'_i=\{D',P_1\.P_i\}$. Removing one marked point from a stable pair produces a semistable pair, hence Lemma~\ref{stablem} applies iteratively and we obtain a sequence of foldings 
$$C=C_n\to C_{n-1}\to\ldots\to C_0=C'$$ such that each $(C'_i,D'_i)$ is stable. In addition, Lemma~\ref{stablem}(ii) implies that any folding $(C,D)\to(C'',D)$ which folds $\cZ$ factors through this sequence. Therefore we obtain a folding $(C',D)\to(C'',D)$, which induces isomorphisms on fibers and hence is an isomorphism.
\end{proof}

\subsubsection{Semistabilization}
The same trick can be used to construct foldings. Since we have already constructed stabilization of semistable curves, let us describe partial semistabilizations which do not fold components of the core. We say that a set $\cZ\subset(C/S)^\gen$ of irreducible components in the fibers is {\em foldable} if there exists a folding $(C,D)\to(C',D)$ which folds precisely the components from $\cZ$.

\begin{lem}\label{contrlem}
Let $(C,D)$ be a prestable marked $S$-curve and let $\cZ\subset(C/S)^\gen$ be a set of components disjoint from the core and from $D$. Then $\cZ$ is foldable if and only if the following conditions are satisfied:
\begin{itemize}
\item[(i)] Each $\cZ_s$ is foldable in the fiber $(C_s,D_s)$. Equivalently, $\cZ_s$ is a disjoint union of rational trees having a single node joint with its complement.
\item[(ii)] The set $\cZ$ is closed under specialization.
\item[(iii)] If $Z\in C_t^\gen\setminus\cZ$ and $s$ is a specialization of $t$, then $Z$ has a specialization in $C_s^\gen\setminus\cZ$.
\end{itemize}
\end{lem}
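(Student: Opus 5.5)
Necessity comes first and is essentially formal. If $h\:(C,D)\to(C',D)$ is a folding contracting exactly $\cZ$, then on each fiber $h_s$ is a folding of $(C_s,D_s)$. So $\cZ_s$ is foldable in the fiber, and the geometric description in (i) follows from the genus-preservation argument of the geometric subsection: a folded connected subcurve contracts to a point while arithmetic genus is preserved, which forces it to be a rational tree with one boundary node. For (ii) and (iii) I would use that the image of a component is a closed subset of $C'$. If $Z\in C_t^\gen$ is folded, its closure maps to a closed subset of $C'$ that is finite over the closure of $t$. Hence every specialization of $Z$ is also contracted, which gives (ii). If $Z$ is not folded, then $h(Z)$ is a component of $C'_t$. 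Any specialization of it in $C'_s$ lifts birationally to a component of $C_s$ that is a specialization of $Z$ and is not folded, which gives (iii).

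For sufficiency I would follow the trick of Lemma~\ref{uniquefold}. Uniqueness is already known from that lemma, so foldings glue along \'etale covers, and the question is \'etale local on $S$. Locally I would add sections $D'\supseteq D$ in the smooth locus of the unfolded components, meeting every component not in $\cZ$, enough to make each such component stable. Condition (iii) is what makes this possible: a section through a non-folded component $Z$ at $t$ specializes to a point of a non-folded specialization, so it never lands on a component of $\cZ$. Since the components of $\cZ$ are rational trees with a single boundary node and carry no points of $D'$ (they are disjoint from $D$ and we chose $D'$ away from them), they are exactly the unstable part of $(C,D')$. Conditions (ii) and (iii) are needed precisely to ensure that this description is uniform on $S$, namely that $D'$ can be chosen as genuine sections. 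If necessary I would first add further sections $P_1\.P_n$ on $\cZ$ to make $(C,D'\cup\{P_i\})$ stable, and then remove them one at a time. Each removal produces a semistable pair, and Lemma~\ref{stablem} gives a folding at each step.

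After removing all the $P_i$, the resulting stabilization of $(C,D')$ folds precisely $\cZ$. It is a folding of $(C,D)$ as well, because $D\subseteq D'$ is preserved and genus does not change. The fiberwise identification of what is folded uses Lemma~\ref{stablem}(i) and (iii).

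The main obstacle is the local existence of the auxiliary sections $D'$ (and $P_i$) in the second paragraph. Concretely, one must show that after an \'etale base change one can pick sections through the smooth locus that meet every non-folded component in every fiber and avoid $\cZ$, with the bookkeeping of generizations and specializations controlled exactly by (ii) and (iii). My first attempt would be to work at a geometric point $s$: choose smooth points on each non-folded component of $C_s$ and lift them by \'etale-local sections, which smoothness of $C^\sm\to S$ allows. Then I would check using (ii) and (iii), in a neighborhood of $s$, that at nearby points $t$ these sections hit all non-folded components and avoid all folded ones.
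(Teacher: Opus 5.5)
Your proof follows the paper's route. Necessity is treated as formal. Sufficiency is exactly the paper's argument: reduce to an \'etale local problem by uniqueness of foldings (Lemma~\ref{uniquefold}), choose $D'\supseteq D$ avoiding $\cZ$ and stabilizing the other components, add sections $P_1,\dots,P_n$ stabilizing all of $C$, and remove them one at a time, stabilizing via Lemma~\ref{stablem}. The paper simply asserts that (i)--(iii) are designed to make $D'$ exist.

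One step, however, fails as written: your proof that (ii) is necessary. It is not true in general that the closure of a point which is closed in its fiber $C'_t$ is finite over $\overline{\{t\}}$. For instance, in $\PP^1_S$ with $S=\Spec k[a,b]$, the closure of the generic-fiber point $[a:b]$ is the blowup of $S$ at the origin and contains the whole fiber over the origin. For a folding $h$ the closure of $h(\zeta)$ (with $\zeta$ the generic point of $Z$) is in fact finite. But this holds only because no folded component specializes to an unfolded one, which is (ii) itself, so your argument is circular. A correct argument uses Chevalley's upper semicontinuity of $x\mapsto\dim_x h^{-1}(h(x))$ for the proper morphism $h$. The locus where this dimension is positive is closed and contains the generic point of every folded component. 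It does not contain the generic point $\zeta'$ of an unfolded component $Z'$ of $C_s$, since $h^{-1}(h(\zeta'))=h_s^{-1}(h(\zeta'))=\{\zeta'\}$. The same two facts, closedness of $h$ and $h^{-1}(h(\zeta'))=\{\zeta'\}$, are what make your lifting step in (iii) actually produce a specialization of $Z$: $h(\zeta')\in\overline{\{h(\zeta)\}}=h(\overline{\{\zeta\}})$ forces $\zeta'\in\overline{\{\zeta\}}$.

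Two smaller points concern sufficiency. First, you have the roles of the conditions interchanged in your second paragraph. A section through a smooth point of an unfolded component $Y'$ of $C_s$ passes, at a generization $t$, through a component of $C_t$ specializing to $Y'$. It is (ii) that forbids this component to lie in $\cZ$. Condition (iii) is what guarantees that every unfolded component of $C_t$ receives sections, since it is the unique generization in $C_t$ of some unfolded component of $C_s$. Second, (ii) and (iii) only relate $s$ to its generizations. Controlling the sections at the remaining points of an \'etale neighbourhood of $s$ therefore needs an extra step. For example, shrink the neighbourhood so that every point $u$ has a generization $w$, itself a generization of $s$, at which the same nodes persist as at $u$. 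Then transport, via (ii) and (iii), the agreement between $\cZ$ and the set of components folded by your stabilization from $w$ to $u$. The paper is silent on this point as well.
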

\begin{proof}
If $\cZ$ is the set of components folded by a folding $C\to C'$, then it is easy to see that (i)--(iii) hold. Conversely, let us assume that these conditions are satisfied and let us construct the folding $h\:C\to C'$. By uniqueness of foldings, it suffices to construct $h$ locally on $S$, so we can assume that as many sections as we need exist. The condition on $\cZ$ is designed so that there exists an enlargement $D'$ of $D$ which avoids the components of $\cZ$ and stabilizes all other components in the fibers. Then as in the proof of Lemma~\ref{uniquefold} we enlarge $D'$ to some $D'_n=\{D',P_1\.P_n\}$ which stabilizes the whole $C$, and then iteratively remove $P_i$'s and stabilize.
\end{proof}

Since the set of components outside of the core clearly satisfies all assumptions of the lemma we obtain the following corollary.

\begin{cor}\label{semistcor}
Let $(C,D)$ be a prestable marked $S$-curve. Then there exits the semistabilization folding $C\to C^\infty$.
\end{cor}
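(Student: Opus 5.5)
The corollary follows from Lemma~\ref{contrlem}, applied with $\cZ\subset(C/S)^\gen$ the set of all fiber components not lying in the fiberwise core $C^\infty=\coprod_s C^\infty_s$. By construction $\cZ$ is disjoint from the core and from $D$, since components carrying marked points are never folded in the geometric semistabilization. So it suffices to check conditions (i)--(iii) of Lemma~\ref{contrlem}. The resulting folding $C\to C'$ then folds exactly the non-core components, and I would take $C^\infty:=C'$. This also gives the core the scheme structure promised in \S\ref{crownsec}. By Lemma~\ref{uniquefold} the folding is unique, so the construction glues from \'etale-local pieces.

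Condition (i) is the geometric statement of \S\ref{layersec0}: in each fiber $(C_s,D_s)$, the complement of the core is a disjoint union of $\PP^1$-trees without marked points, each with a single boundary node. This is exactly the description of the semistabilization $C_s\to C_s^\infty$ obtained by iteratively contracting leaves.

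For (ii) and (iii) I would use the numerical criterion for the core. A component $Z$ of $C_s$ lies outside the core if and only if it sits in a subtree $T$ of $\PP^1$'s with no marked points and one boundary node. Equivalently, the degree of $\omega_{(C,D)}$ restricted to $T$ is $-1$. Suppose $t$ specializes to $s$ and $Z_t\in\cZ$ lies in such a tree $T_t$. The specializations of the components of $T_t$ form a connected subcurve $T_s$ of $C_s$. Its arithmetic genus is $0$ and it carries no marked points, because $D$ is \'etale and specializes within sections. By flatness, the degree of $\omega$ on $T_s$ is again $-1$, equivalently $T_s$ meets its complement in one point. Hence $T_s$ is a tree of the same kind, lies outside the core, and (ii) holds.

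For (iii), suppose $Z_t$ lies in the core. Then $Z_t$ is in a connected subcurve $C^\infty_t$ that either has genus $\ge1$, carries marked points, or has $\deg\omega\ge0$ on every subtree. I would show that some specialization of $Z_t$ lies in $C^\infty_s$. Take the union $W$ of all specializations of $Z_t$; it is connected and has $\deg\omega|_W=\deg\omega|_{Z_t}$. If $W$ were entirely contained in non-core trees of $C_s$, it would be a connected subcurve of such a tree. Then $W$ would have genus $0$ and no marked points, so $Z_t$ would also have genus $0$ and no marked points. Moreover, the whole core subcurve $C^\infty_t$ would specialize into a tree, which is impossible because a genus-$0$ unmarked configuration with positive $\deg\omega$ cannot sit inside a tree whose total degree is $-1$.

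I expect this last specialization argument, the verification of (iii), to be the main obstacle. In particular, checking the degree/genus bookkeeping under specialization needs care. An alternative I would try first is \'etale-local: add sections $D'$ stabilizing the core as in the proof of Lemma~\ref{contrlem}, and observe that $C^\infty$ is obtained by the stabilization $C\to C^\st$ relative to $D\cup D'$ with $D'$ chosen on the core. This would reduce (ii) and (iii) to the closedness properties built into Lemma~\ref{stablem}.
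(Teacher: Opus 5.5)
Your strategy is the same as the paper's: apply Lemma~\ref{contrlem} to the set $\cZ$ of components lying outside the fiberwise cores. The paper simply declares (i)--(iii) clear. Your checks of (i) and (ii) are fine once you work over a trait or valuation ring through $t$ and $s$.

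Your verification of (iii), however, has a genuine gap, for two reasons.
\begin{enumerate}
\item The step ``the whole core subcurve $C^\infty_t$ would specialize into a tree'' is unsupported. The specialization of $C^\infty_t$ is connected and contains $W$, but it can leave the tree $T\supseteq W$ through its root node $P$. Core specializations do cross root nodes in general: take a smooth genus-$1$ fiber specializing to an elliptic curve with one leaf attached. Ruling this out is essentially the content of (iii) itself.
\item The concluding degree claim is false, because $\omega$-degree is not monotone on subcurves. A connected subcurve $W'$ of an unmarked rational tree has $\deg\omega|_{W'}=|W'\cap\overline{C_s\setminus W'}|-2$. For instance, if $T$ is a component $Y$ attached to the core with two leaves on it, then $\deg\omega|_T=-1$ while $\deg\omega|_Y=1$.
\end{enumerate}
So neither $W$ nor the core yields a contradiction by degree counting.

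The missing idea is to pass to a subtree of $T$ that is saturated under generization.
\begin{enumerate}
\item Let $Y$ be the component of $W$ closest to $P$, and let $Q$ be the node of $Y$ on the path towards $P$ (so $Q=P$ if $P\in Y$). Let $T_Y\subseteq T$ be the subtree cut off by $Q$ that contains $Y$.
\item The component on the other side of $Q$ is not in $W$. Each component of $C_s$ has a unique generization, and the specialization of each component of $C_t$ is connected. Hence no component of $C_t$ specializes across $Q$.
\item Therefore $W\subseteq T_Y$, and $T_Y$ is exactly the specialization of a subcurve $T_{Y,t}\ni Z_t$ of $C_t$.
\item Your flatness argument from (ii) now runs in the opposite direction. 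The closure of $T_{Y,t}$ has special fiber $T_Y$. So $T_{Y,t}$ is connected (by semicontinuity of $h^0$), has arithmetic genus $0$, carries no marked points, and has $\deg\omega=-1$.
\item Thus $T_{Y,t}$ is an unmarked rational tree with one boundary node, so $Z_t\in\cZ$, a contradiction.
\end{enumerate}
In dual-graph terms: the graph of $C_t$ is obtained by contracting the edges smoothed along $t$, and the uncontracted bridge $Q$ still cuts off an unmarked genus-$0$ tree.

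Your fallback via stabilization relative to $D\cup D'$ does not avoid this. Lemma~\ref{stablem} requires a semistable input, and the choice of $D'$ in the proof of Lemma~\ref{contrlem} relies on exactly (ii) and (iii).
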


\begin{rem}\label{semistablerem}
Set-theoretically $C^\infty=C^\infty(D)$ is embedded in $C$, but this does not respect the schematic structure (unless $S=\Spec(k)$). From now on we view the core $C^\infty$ as a folding of $C$, with a set-theoretic section $C^\infty\into C$.

%(ii) Despite some choices we have managed to construct the canonical folding onto the core. One can wonder if a more canonical construction exists, but the probable answer is negative. A typical bad example is in the style of Hironaka's famous threefold -- take $S$ a surface with closed points $s,t$, take any prestable $C$ over $S$ and find two sections $P,Q$ which intersect transversally over $s$ and $t$.
\end{rem}

\subsubsection{The local base case}\label{localfold}
Over a local base $(S,s)$ one has a canonical factorization of the morphism $C\to C^\infty$ which extends that of $C_s\to C_s^\infty$. Namely, let $\cZ$ be the set of components that specialize only to a leaf in $C_s$. Then $\cZ$ is foldable by Lemma~\ref{contrlem}, we obtain a folding $C\to C^1$, and proceed by induction. This construction does not globalize over $S$ and is not even compatible with localizations at points $t\in S$. The layer function $\ell\:(C/S)^\gen\to\NN$ induced by this factorization is the minimal function which is non-decreasing with respect to specializations and dominates all fiberwise layer functions $\ell_t\:C_t^\gen\to\NN$.

\subsubsection{Localization along a component}\label{stabcompsec}
We will also use the following construction: if $Z$ is a geometrically connected component in $C_s$ not lying in the core, one can add a section $O_S$ through $Z$, set $D'=\{D,O_S\}$ and fold $C$ onto $C'=C^\infty(D')$. This folds all components in $C_s$ except those in $C_s^\infty$ and in the chain connecting $Z$ to the core. In particular, $C'$ has at most one leaf in $C_s$ and hence in each other fiber $C_t$.

%\begin{exam}\label{stabcompexam}
%(i) If $E$ is a genus-1 prestable $S$-curve
%\end{exam}

\subsection{Deformations of stable curves}
Deformations of stable curves is a very standard material, so we just formulate a few results we need.

\subsubsection{Notation}
Let $(C,D)$ be a stable $n$-pointed genus-$g$ curve over a field $k$. By $P=\{P_j\}_{j\in J}$ and $\{\tilC_i\}_{i\in I}$ we denote the sets of nodes and normalized irreducible components of $C$. Also, let $\tilD_i$ be the preimage of $P\cup D$ in $\tilC_i$ and $n_i=|\tilD_i|$. We will now recall the classical computation of the tangent space $T_{(C,D)}(\ocM_{g,n})$ at the point $(C,D)$. By our convention, this is the relative tangent space over $\ZZ$ at the image $x\in\ocM_{g,n}$ of $(C,D)$, which is the same as the tangent space to $\ocM_{g,n}\otimes\FF_p$ at $x$.

\subsubsection{The tangent space}
The space of first order deformations is computed as $T_{(C,D)}(\ocM_{g,n})=\Ext^1_{\cO_C}(\Omega^1_C(D),\cO_C)$, where the twist detects the freedom to move $D$ in $\ocM_{g,n}$. Using the local-to-global spectral sequence computing $\Ext$ we split this group between the two groups $H^i(C,\calExt^{1-i}(\Omega^1_C(D),\cO_C))$, with $i=1$ corresponding to the global part and $i=0$ describing local deformations of the nodal singularities.

\subsubsection{Nodal contributions}
Each node has a one-dimensional deformation space corresponding to the smoothing parameter $\pi_j$ in the local equation $xy=\pi_j$ and it contributes the term $T_{P_j}=T_{P'_j}(\tilC'_j)\otimes T_{P''_j}(\tilC''_j)\toisom k$, where $P'_j$ and $P''_j$ are the preimages of $P_j$ in the normalization $\tilC$, and $\tilC'_j$, $\tilC''_j$ are the components (possibly the same component) containing them.

\subsubsection{Contribution of components}
The global term describes the part of the deformation obtained when the nodes do not smoothen infinitesimally (i.e. $\pi_j=0$). It splits to the direct sum of deformations of $(\tilC_i,\tilD_i)$, which signals that both nodes and marked points are allowed to travel under the deformation.

\subsubsection{The summary}\label{sumapp}
Tying everything together we simplify the final formula as
\begin{equation}
0\to\oplus_{i\in I} T_{(\tilC_i,\tilD_i)}(\ocM_{g_i,n_i})\to T_{(C,D)}(\ocM_{g,n})\to\oplus_{j\in J} T_{P_j}\to 0,
\end{equation}
where
\begin{equation}
T_{(\tilC_i,\tilD_i)}(\ocM_{g_i,n_i})=H^1(\tilC_i,T_{\tilC_i}(-\tilD_i))=H^0(\tilC_i,\Omega^{\otimes 2}_{\tilC_i}(\tilD_i))'
\end{equation}
 because in this case $\Omega_{\tilC_i}(\tilD_i)$ is locally free.

\subsection{Equivariant deformations}
Finally, the results about equivariant deformations follow from the non-equivariant analogs.

\subsubsection{The action and the fixed locus}
Assume now that a finite group $\Gamma$ acts on $\{1\.n\}$ and hence also on $\ocM_{g,n}$ by permuting the sections. Then the fixed locus $\ocM_{g,n}^\Gamma$ is defined and it is a closed substack, see \S\ref{fixedsec}.

\subsubsection{$\Gamma$-equivariant tangent space}
Now, assume that $(C,D)$ is a $\Gamma$-equivariant point of $\ocM_{g,n}$ and let us compute the tangent space $T_{(C,D)}(\ocM_{g,n}^\Gamma)$. Since the space $H^0(C,T_C(D))$ vanishes by the stability of $(C,D)$, we have that
\begin{equation}
T_{(C,D)}(\ocM_{g,n}^\Gamma)=H^1(C,T_C(D))^\Gamma=T_{(C,D)}(\ocM_{g,n})^\Gamma.
\end{equation}
Thus we should just apply the functor of $\Gamma$-invariants to the sequence computing $T_{(C,D)}(\ocM_{g,n})$ in \S\ref{sumapp}.

\subsubsection{The summary}\label{summary}
For any node $P_j$ its stabilizer $\Gamma_j$ acts on $T_{P_j}$, and an element $(t_j)_{j\in J}\in\oplus_{j\in J} T_{P_j}$ is $\Gamma$-invariant if and only if each $t_j$ is $\Gamma_j$-invariant and also $t_j=t_{j'}$ whenever $j$ and $j'$ lie in the same $\Gamma$-orbit of $J$. Thus it suffices to pick up a single representative in any $\Gamma$-orbit: $(\oplus_{j\in J} T_{P_j})^\Gamma\toisom\oplus_{j\in J/\Gamma} (T_{P_j})^{\Gamma_j}$. Absolutely the same reasoning applies to the components $C_i$ and their stabilizers $\Gamma_i$ and we obtain the exact sequence
\begin{multline}\label{summaryeq}
0\to\oplus_{i\in I/\Gamma} T_{(\tilC_i,\tilD_i)}(\ocM_{g_i,d_i})^{\Gamma_i}\to T_{(C,D)}(\ocM_{g,d})^\Gamma\to\oplus_{j\in J/\Gamma} (T_{P_j})^{\Gamma_j}\stackrel{\partial}{\to}\\ \oplus_{i\in I/\Gamma} H^1(\Gamma_i,T_{(\tilC_i,\tilD_i)}(\ocM_{g_i,d_i}))\to\ldots
\end{multline}

If $|\Gamma|$ is not invertible the $H^1(\Gamma_i,\cdot)$ term (and the boundary map) can be non-zero, providing an obstacle for equivariant smoothing of the nodes. Besides this complication, the equivariant deformation space is combined from the terms $(T_{P_j})^{\Gamma_j}$ and $$T_{(\tilC_i,\tilD_i)}(\ocM_{g_i,d_i})^{\Gamma_i}=H^1(\tilC_i,T_{\tilC_i}(-\tilD_i))^{\Gamma_i}.$$ Each space $(T_{P_j})^{\Gamma_j}$ is either trivial or one-dimensional, and if $\Gamma_j$ does not switch the branches at $P_j$, then the second possibility occurs if and only if it acts on $T_{P'_j}(\tilC_{j'})$ and $T_{P''_j}(\tilC_{j''})$ through opposite characters $\Gamma_j\to k^\times$.

\bibliographystyle{amsalpha}
\bibliography{wild}

\end{document}